\numberwithin{equation}{section}
\def\R{{\mathbb R}}
\def\N{{\mathbb N}}
\def\CC{\mathcal{C}}
\def\CE{\mathcal{E}}
\def\CH{\mathcal{H}}
\def\CP{\mathcal{P}}
\def\cat{\mathrm{cat}}
\def\vol{\mathrm{vol}}
\def\eps{\varepsilon}
\def\loc{\mathrm{loc}}
\def\supp{\mathrm{supp}}
\def\dist{\mathrm{dist}}
\titleclass{\subsubsubsection}{straight}[\subsection]
\newcounter{subsubsubsection}[subsubsection]
\renewcommand\thesubsubsubsection{\thesubsubsection.\arabic{subsubsubsection}}
\renewcommand\theparagraph{\thesubsubsubsection.\arabic{paragraph}} % optional; useful if paragraphs are to be numbered
\renewcommand\paragraph{\@startsection{paragraph}{5}{\z@}%
	{3.25ex \@plus1ex \@minus.2ex}%
	{-1em}%
{\normalfont\normalsize\bfseries}}
\renewcommand\subparagraph{\@startsection{subparagraph}{6}{\parindent}%
	{3.25ex \@plus1ex \@minus .2ex}%
	{-1em}%
{\normalfont\normalsize\bfseries}}
\def\toclevel@subsubsubsection{4}
\def\toclevel@paragraph{5}
\def\toclevel@paragraph{6}
\def\l@subsubsubsection{\@dottedtocline{4}{7em}{4em}}
\def\l@paragraph{\@dottedtocline{5}{10em}{5em}}
\def\l@subparagraph{\@dottedtocline{6}{14em}{6em}}
\theoremstyle{plain}\newtheorem{theorem}{Theorem}
\theoremstyle{plain}\newtheorem{proposition}[theorem]{Proposition}
\theoremstyle{plain}\newtheorem{lemma}[theorem]{Lemma}
\theoremstyle{plain}
\theoremstyle{plain}
\theoremstyle{definition}\newtheorem{definition}[theorem]{Definition}
\theoremstyle{remark}\newtheorem{remark}[theorem]{Remark}
\theoremstyle{definition}\newtheorem{example}[theorem]{Example}
\numberwithin{theorem}{section} 
\numberwithin{equation}{section}
\theoremstyle{plain}\newtheorem{theoremletter}{Theorem}
\numberwithin{theorem}{section} 
\numberwithin{equation}{section}
\newtheorem{asu}{}
\newcommand{\de}{\mathrm{d}}
\DeclarePairedDelimiter{\norm}{\lVert}{\rVert}
\title{Multiplicity results
	for mass constrained Allen-Cahn equations
on Riemannian manifolds with boundary}
\author{D. Corona, S. Nardulli, R. Oliver-Bonafoux, G. Orlandi and P. Piccione}
\begin{document}
\maketitle
\begin{abstract}
    We present multiplicity results for mass constrained Allen-Cahn
    equations on a Riemannian manifold with boundary,
    considering both Neumann and Dirichlet conditions.
    These results hold under the assumptions of small mass constraint and small diffusion parameter.
    We obtain lower bounds on the number of solutions according to the Lusternik--Schnirelmann category of the manifold in case of Dirichlet boundary conditions and of its boundary in the case of Neumann boundary conditions.
    Under generic non-degeneracy assumptions on the solutions, we obtain stronger results based on Morse inequalities.
    Our approach combines topological and variational methods with tools from Geometric Measure Theory.
\end{abstract}

\emph{2020 Mathematics Subject Classification:} 35A15, 49Q20, 58J32.

\emph{Key words and phrases:} Variational methods, $\Gamma$-convergence, isoperimetric-type problems, Allen-Cahn equations, manifolds with boundary.

\section{Introduction}

Let $(M,g)$ be a smooth, compact $n$--dimensional
Riemannian
manifold with boundary, for $n\ge 2$.
Let $W \in \CC^3_{\loc}(\R,[0,+\infty))$ a double-well potential
such  that $W^{-1}(0)=\{0,1\}$. For fixed $\eps,m > 0$, consider the following Allen-Cahn type equations with a mass constraint:
\begin{equation}
	\label{eq:Cahn-Hilliard-PDE}
	\begin{dcases}
		-\eps \Delta u_{\eps,m}+ \frac{1}{\eps} W'(u_{\eps,m})=\lambda_{\eps,m}, & \mbox{ on } M,\\
		\int_M u_{\eps,m} \de v_g=m, &
	\end{dcases} 
\end{equation}
where the unknown parameter $\lambda_{\eps,m} \in \R$ is a Lagrange multiplier
%(which is also part of the solution)
related to the mass constraint $\int_M u_{\eps,m} \de v_g=m$. We study \eqref{eq:Cahn-Hilliard-PDE} under both Neumann and Dirichlet boundary conditions,
which leads respectively to the two following problems:
\begin{equation}
	\label{eq:PDE_Neumann}
	\begin{dcases}
		-\eps \Delta u_{\eps,m}+ \frac{1}{\eps} W'(u_{\eps,m})=\lambda_{\eps,m}, & \mbox{ on } M,\\
		\int_M u_{\eps,m}=m, & \\
		\frac{\partial u_{\eps,m}}{\partial \nu}=0, & \mbox{ on } \partial M,
	\end{dcases} 
\end{equation}
where $\nu\in TM\big|_{\partial M}$ is the unit inner normal vector to $\partial M$ in $M$,
and
\begin{equation}
	\label{eq:PDE_Dirichlet}
	\begin{dcases}
		-\eps \Delta u_{\eps,m}+ \frac{1}{\eps} W'(u_{\eps,m})=\lambda_{\eps,m}, & \mbox{ on } M,\\
		\int_M u_{\eps,m}=m, & \\
		u_{\varepsilon,m} = 0, & \mbox{ on } \partial M.
	\end{dcases} 
\end{equation}

A straightforward computation shows that solutions
of the Neumann problem \eqref{eq:PDE_Neumann} can be found among 
the critical points of the functional
\begin{equation}
	\label{eq:def-E-varepsilon}
	\mathcal{E}_{\varepsilon}(u)\coloneqq 
	\int_M \left(\varepsilon\, \frac{\lvert \nabla u \rvert^2}{2}+\frac{1}{\eps}W(u) \right)\de v_g,
\end{equation}
in the function space 
\begin{equation*}
	\mathcal{H}_m\coloneqq 
	\left\{u \in H^1(M,\R):\int_M u\, \de v_g=m \right\},
\end{equation*}
while critical points of the restriction of $\CE_\eps$ to the subspace
\begin{equation*}
	\mathcal{H}_{m,0}\coloneqq 
	\left\{u \in H^1_0(M,\R):\int_M u\, \de v_g=m \right\},
\end{equation*}
are solutions of the Dirichlet problem \eqref{eq:PDE_Dirichlet}.

The functional $\CE_\eps$ is widely used in the modelling of phase transition phenomena. For small $\eps>0$, critical points of $\CE_\eps$ in $\CH_m$ and $\CH_{m,0}$ develop transition layers whose profiles are close to solutions of isoperimetric-type problems. This link was first noticed (in the minimizing case, using the ideas of $\Gamma$-convergence) in the seminal work of Modica \cite{modica} in the case of Neumann boundary conditions and by Owen, Rubinstein and Sternberg \cite{owen-rubinstein-sternberg} in the case of Dirichlet boundary conditions. Extensions to arbitrary critical points have been given by Hutchinson and Tonegawa \cite{hutchinson-tonegawa}. Analogously, in the absence of a mass constraint critical points of Allen-Cahn functionals are linked to the min-max theory of minimal surfaces (and, more generally, submanifolds) which was initiated by Almgren \cite{almgren62,almgren65} and improved later by Pitts \cite{pitts81}.
This theory has been object of an extensive study in the past years after the works of Marques and Neves (e.g., \cite{marques-neves14,marques-neves17}). In particular, a min-max theory for isoperimetric-type surfaces was developed by Zhou and Zhu \cite{zhou-zhu2019}.
As a consequence, there has been an increased and renewed interest in the study of existence and multiplicity of critical points for functionals of Allen-Cahn type and their asymptotic behavior as $\eps \to 0$, departing from results of Gaspar and Guaraco \cite{gaspar-guaraco,guaraco}.
In the presence of a mass constraint, first studies in this direction have been carried out in
\cite{benci-nardulli-osorio-piccione} in the case of manifolds without boundary, while related vectorial problems have been recently studied in \cite{andrade-conrado-nardulli-piccione-resende}.
In a related direction, Bellettini and Wickramaseckera \cite{bellettini-wickramasekera} studied the existence of solutions for a more general equation of Allen-Cahn-type associated to surfaces of prescribed mean curvature.

The purpose of this paper is to extend
the main results of \cite{benci-nardulli-osorio-piccione}
to the case of manifolds with boundary.
More precisely, we establish lower bounds on
the number of solutions for~\eqref{eq:PDE_Neumann}
and~\eqref{eq:PDE_Dirichlet} in terms of certain topological invariants of the underlying manifold.
These lower bounds hold true for values
of the parameters $\eps$ and $m$ which are relatively small.
We believe that these results on manifolds with boundary are of interest because they include the case of  Euclidean domains in $\R^n$ (the one originally considered in classical works such as \cite{modica,owen-rubinstein-sternberg}), which is left out in the setting of manifolds without boundary. Moreover, they require a fine understanding of relative isoperimetric-type problems for small volumes on Riemannian manifolds with boundary, which actually depend heavily on the boundary conditions (Dirichlet or Neumann) that are imposed. Overall, significant differences appear when one moves from the setting without boundary to the setting with boundary.

In order to state and prove our main results, we consider the following typical assumptions on the potential $W$:
\begin{asu}
	\label{asu_nondegenerate}
	The wells are non-degenerate global minimizers,
	i.e. $W''(0),W''(1) > 0$.
\end{asu}
\begin{asu}\label{asu_coercive}
	The potential $W$ is coercive, i.e. there exist $R>0$ and $\alpha>0$ such that
	\begin{equation}\label{coercive_V}
		W'(u) u \geq \alpha \lvert u \rvert^2,
		\quad \text{ if } \lvert u \rvert \geq R.
	\end{equation}
\end{asu}
\begin{asu}\label{asu_subcritical} The potential $W$ has subcritical growth at infinity, i.e.
	there exist $p \in [2,2^*)$, $\alpha'>0$ and $R'>0$
	such that 
	\begin{equation*}
		\lvert W''(u) \rvert \leq \alpha' \lvert u \rvert^{p-2},
		\quad \text{ if } \lvert u \rvert \geq R'.
	\end{equation*}
 Here $2^*$ denotes the critical Sobolev exponent, i.e. $2^*=2n/(n-2)$ if $n \geq 3$ and $2^*=+\infty$ if $n=2$.
\end{asu}
For $n=2,3$,
a standard example of function $W:\R\to [0,+\infty)$ satisfying
\ref{asu_nondegenerate}, \ref{asu_coercive} and \ref{asu_subcritical}
is the quartic potential:
\begin{equation*}
    W_{q}(u)=u^2(u-1)^2.
\end{equation*}

We will denote by $\mathcal{E}_{\varepsilon,m}$
the restriction of $\mathcal{E}_{\varepsilon}$ to $\mathcal{H}_{m}$,
and by $\overline{\mathcal{E}}_{\varepsilon,m}$
the restriction to $\mathcal{H}_{m,0}$,
that is
\[
	\mathcal{E}_{\varepsilon,m} \coloneqq \mathcal{E}_{\varepsilon}\big|_{\mathcal{H}_{m}}
	\quad\text{and}\quad
	\overline{\mathcal{E}}_{\varepsilon,m} \coloneqq \mathcal{E}_{\varepsilon}\big|_{\mathcal{H}_{m,0}}.
\]

Our multiplicity results for critical points of $\mathcal{E}_{\varepsilon,m}$ and $\overline{\mathcal{E}}_{\varepsilon,m}$ are established in term of certain topological invariants of $M$ and $\partial M$ that we quickly recall here, referring for instance to Benci \cite{benci95} for more details on definitions. Given a topological space $X$, denote by $\cat(X)$
its \emph{Lusternik-Schnirelmann category}, i.e. the  
smallest number of sets, open and contractible in $X$,
needed to cover $X$ ($\cat(X) = + \infty$ in case such number does not exist). Denote moreover by $\mathscr{P}_1(X)$ the sum of the \emph{Betti numbers}
of $X$ or, equivalently,
the value at $1$ of the \emph{Poincaré polynomial} of $X$. 

The main results of the paper are the following ones.
\begin{theoremletter}
	\label{theorem:ACH-Neumann}
	Assume that \ref{asu_nondegenerate},
	\ref{asu_coercive} and \ref{asu_subcritical} hold.
	Then, there exists $m^*>0$ such that
        for all $m \in (0,m^*)$
	there exists $\eps_{m},c_{m}>0$ 
	such that for any $\eps \in (0,\eps_{m})$
	%there exists $c_{m} > 0$
	%such that 
 the Neumann problem
 \eqref{eq:PDE_Neumann}
	has at least $\mathrm{cat}(\partial M)$ solutions $u_{\eps,m}$
	with $\CE_{\varepsilon}(u_{\eps,m})\le c_{m}$
	and at least one solution $v_{\eps,m}$ with $\CE_{\varepsilon}(v_{\eps,m})> c_{m}$.
	Moreover, if $m$ and $\eps$ as above are such that all critical points of
	$\mathcal{E}_{\varepsilon,m}$ are non-degenerate, then \eqref{eq:PDE_Neumann} has at least $\mathscr{P}_1(\partial M)$
	solutions $u_{\eps,m}$ with $\CE_{\varepsilon}(u_{\eps,m})\le c_{m}$ and at least $\mathscr{P}_1(\partial M)-1$
	solutions $v_{\eps,m}$ with $\CE_{\varepsilon}(v_{\eps,m})> c_{m}$.
\end{theoremletter}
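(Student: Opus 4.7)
The plan is to extend the photography-barycenter machinery developed in \cite{benci-nardulli-osorio-piccione} for boundaryless manifolds, exploiting an asymmetry peculiar to the Neumann problem: the boundary $\partial M$ does not contribute to the $\Gamma$-limit of $\CE_\eps$, which is the Modica perimeter relative to $\partial M$. As a consequence, for small mass $m$ the optimal configurations are close to geodesic half-balls attached to $\partial M$, whose relative perimeter is strictly smaller (by a factor $2^{-1/n}$, in leading order) than that of any interior geodesic ball of the same volume. This strict energetic gap separates ``boundary-concentrated'' from ``interior'' configurations and is the cornerstone of the whole argument.

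The first step is to make this gap quantitative by establishing an asymptotic expansion
\[
	\inf_{u \in \CH_m} \CE_\eps(u) = \sigma \cdot 2^{-1/n}\, P_n\, m^{(n-1)/n}\, (1 + o_{\eps,m}(1)),
\]
where $\sigma = \int_0^1 \sqrt{2W(s)}\,\de s$ and $P_n$ is the Euclidean isoperimetric constant in dimension $n$. I would then fix $c_m$ strictly between this half-ball level and the full interior-ball level, and build a \emph{photography} map $\Phi_\eps^m\colon \partial M \to \{\CE_{\eps,m} \leq c_m\}$ by placing at each $\xi \in \partial M$ a one-dimensional Allen-Cahn profile along the inner normal, cut off and rescaled to have mass exactly $m$. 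Working in geodesic boundary coordinates, a direct computation yields $\CE_\eps(\Phi_\eps^m(\xi)) \leq c_m$ uniformly in $\xi$ once $\eps$ and $m$ are small enough, and continuity in $\xi$ follows from the smoothness of the construction.

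In the opposite direction, I would construct a \emph{barycenter} map $\beta\colon \{\CE_{\eps,m} \leq c_m\} \to \partial M$. This requires a quantitative concentration lemma: any $u \in \CH_m$ with $\CE_\eps(u) \leq c_m$ has its bulk phase $\{u \geq 1/2\}$ contained in a small geodesic ball touching $\partial M$, with diameter controlled by the energy gap. Then $\beta(u)$ is defined as the nearest-point projection onto $\partial M$ of the center of mass of $u$ computed via an isometric ambient embedding $M \hookrightarrow \R^N$; both well-definedness and continuity follow from the concentration lemma, and a homotopy between $\beta \circ \Phi_\eps^m$ and $\mathrm{id}_{\partial M}$ is constructed from the smallness of the supports. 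One concludes $\cat(\{\CE_{\eps,m} \leq c_m\}) \geq \cat(\partial M)$, and Lusternik-Schnirelmann theory on the Hilbert manifold $\CH_m$ (with the Palais-Smale condition for $\CE_{\eps,m}$ granted by \ref{asu_coercive}-\ref{asu_subcritical}) yields at least $\cat(\partial M)$ critical points below $c_m$. An additional high-energy solution arises from a min-max argument: non-contractibility of the low sublevel produces a linking with a high-energy reference point whose min-max level sits strictly above $c_m$, giving a mountain-pass critical value. Under non-degeneracy, the Morse inequalities applied to the pair $(\CH_m, \{\CE_{\eps,m} \leq c_m\})$ refine these counts to $\mathscr{P}_1(\partial M)$ low-energy and $\mathscr{P}_1(\partial M)-1$ high-energy solutions.

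The main obstacle is the concentration lemma underpinning $\beta$, which amounts to a fine analysis of the relative isoperimetric problem on $(M,\partial M)$ for small volumes. Specifically, one needs a \emph{uniform} lower bound on the relative perimeter (and on $\CE_\eps$ via a sharp $\Gamma$-lower-bound matched to the non-boundary regime) for sets whose phase is either split among several components or lies at positive distance from $\partial M$; the bound should exceed the half-ball value by an amount strictly larger than $c_m - \inf_{\CH_m} \CE_\eps$. This is where the boundary geometry enters in a non-trivial way, and the analysis of \cite{benci-nardulli-osorio-piccione} for closed manifolds must be genuinely extended rather than merely adapted; once this is in place, the remainder of the proof follows the classical Benci-Cerami template.
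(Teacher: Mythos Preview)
Your proposal is correct and follows essentially the same photography--barycenter strategy as the paper. The only noteworthy differences are in the implementation: the paper takes $c_m = \sigma I_M(m) + \theta m$ with $\theta$ determined by the oscillation of the boundary mean curvature (via Fall's expansion, Lemma~\ref{lemma:fall10-418}) rather than the half-ball/full-ball gap you propose, and the concentration lemma you correctly flag as the main obstacle is settled not by a direct perimeter lower bound but by a blow-up argument invoking a compactness theorem for finite-perimeter sets in sequences of RCD spaces (Theorem~\ref{theorem:antonelli2022}), which forces the rescaled limit to be a single piece in $\R^n_+$ precisely because $I_{\R^n_+}(1) < I_{\R^n}(1)$.
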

\begin{theoremletter}
	\label{theorem:ACH-Dirichet}
	Assume that \ref{asu_nondegenerate}, \ref{asu_coercive} and \ref{asu_subcritical} hold.
	Then, there exists $m^*>0$ such that for all $m \in (0,m^*)$
	there exist $\eps_{m},c_{m}>0$ 
	such that for any $\eps \in (0,\eps_{m})$
	the Dirichlet problem \eqref{eq:PDE_Dirichlet}
	has at least $\mathrm{cat}( M)$ solutions 
	$u_{\eps,m}$
	with $\CE_{\varepsilon}(u_{\eps,m})\le c_{m}$
	and, if $\cat(M) > 1$,
	there exists at least one solution $v_{\eps,m}$ with $\CE_{\varepsilon}(v_{\eps,m})> c_{m}$.
	Moreover, if $m$ and $\eps$ as above are such that all critical points of
	$\overline{\mathcal{E}}_{\varepsilon,\eps}$ are non-degenerate,
	then \eqref{eq:PDE_Dirichlet} has at least $\mathscr{P}_1( M)$
	solutions $u_{\eps,m}$
	with $\CE_{\varepsilon}(u_{\eps,m})\le c_{m}$ and at least $\mathscr{P}_1( M)-1$
	solutions $v_{\eps,m}$ with $\CE_{\varepsilon}(v_{\eps,m})> c_{m}$.
\end{theoremletter}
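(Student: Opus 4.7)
The strategy I would pursue is Benci's photography method applied to $\overline{\CE}_{\eps,m}$ on $\CH_{m,0}$, following the approach of Benci, Nardulli, Osorio, and Piccione in the boundaryless case, but with the topology of the whole manifold $M$ replacing that of $\partial M$. The key geometric reason for this substitution is that, under Dirichlet boundary conditions, any low-energy profile of small mass $m$ must concentrate at an \emph{interior} point of $M$: a would-be bubble straddling $\partial M$ would still have to transition back to $u=0$ on $\partial M$, raising the $\Gamma$-limit perimeter cost above that of a small geodesic ball placed in the interior. Hence the natural concentration set is $M$ itself (as opposed to $\partial M$ in the Neumann case), and one expects the topology of $M$ to control the number of critical points.

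Concretely, I would first build a continuous map $\Phi_{\eps,m}\colon M\to\CH_{m,0}$ which sends each $x\in M$ to an explicit bubble, constructed by transporting the one-dimensional Modica profile of $\CE_\eps$ onto a geodesic sphere of volume $m$ centered at $x$, suitably pushed inward and regularized when $x$ lies close to $\partial M$ so as to stay in $H^1_0(M)$. Setting $c_m$ to be slightly larger than the Dirichlet relative isoperimetric value for mass $m$, one checks $\overline{\CE}_{\eps,m}(\Phi_{\eps,m}(x))\le c_m$ for $\eps$ small. Next, I would construct a barycenter map $\beta\colon\{\overline{\CE}_{\eps,m}\le c_m\}\to M$ using, for example, a Karcher mean of the density $u^2\, dv_g$ (or of the energy density) composed with a nearest-point projection if needed. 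Via $\Gamma$-convergence of $\CE_\eps$ to the perimeter under mass constraint, combined with quantitative stability of the Dirichlet relative isoperimetric problem for small masses, any $u$ in the sublevel is $L^1$-close to the characteristic function of a single small geodesic ball, so that $\beta$ is well defined and continuous and satisfies $\beta\circ\Phi_{\eps,m}\simeq\mathrm{id}_M$. After verifying the Palais--Smale condition for $\overline{\CE}_{\eps,m}$ via \ref{asu_coercive}--\ref{asu_subcritical}, classical Lusternik--Schnirelmann theory then produces at least $\cat(M)$ critical points of $\overline{\CE}_{\eps,m}$ below level $c_m$.

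For the high-energy solution, when $\cat(M)>1$ the image $\Phi_{\eps,m}(M)$ has nontrivial category in $\CH_{m,0}$, since it projects homotopically onto $M$, while $\CH_{m,0}$ itself is contractible. A standard minimax over homotopies sweeping out $\Phi_{\eps,m}(M)$ in $\CH_{m,0}$ then produces at least one additional critical point of energy strictly above $c_m$. Under the non-degeneracy assumption, the sublevel $\{\overline{\CE}_{\eps,m}\le c_m\}$ is homotopy equivalent to $M$, so the Morse inequalities for $\overline{\CE}_{\eps,m}$ yield the $\mathscr{P}_1(M)$ low-energy critical points; comparing the Poincaré polynomials of $\{\overline{\CE}_{\eps,m}\le c_m\}$ and of the contractible ambient space $\CH_{m,0}$ through the Morse relations then gives the additional $\mathscr{P}_1(M)-1$ high-energy ones. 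The hardest step is the analysis underlying $\beta$: one needs a quantitative concentration statement for almost-minimizers of $\overline{\CE}_{\eps,m}$ at levels close to the Dirichlet isoperimetric value, blending Hutchinson--Tonegawa-type convergence with a fine treatment of the small-volume relative isoperimetric problem with Dirichlet condition that simultaneously rules out boundary half-bubbles and multiple concentration bubbles.
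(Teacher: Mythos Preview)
Your proposal is correct and follows the same architecture as the paper: apply the abstract photography theorem (Theorem~\ref{theorem:photography}) to $\overline{\CE}_{\eps,m}$ on the contractible space $\CH_{m,0}$ with $X=M$, building the photography map from $\eps$-regularized indicators of small geodesic balls pushed into the interior via a boundary-layer map, and a barycenter map landing in $M$, then showing their composition is homotopic to $\mathrm{id}_M$. Two implementation details differ. For the barycenter, the paper embeds $M$ isometrically in $\R^{\tilde n}$ and takes the extrinsic Euclidean center of mass of $|u|$ followed by nearest-point projection onto $M$; your Karcher-mean variant would also work but is less elementary here. For the concentration step, the paper does not use Hutchinson--Tonegawa or quantitative isoperimetric stability: it instead rescales $(M,g)$ by $m_k^{-1/n}$ and applies the RCD compactness theorem of Antonelli et al.\ (Theorem~\ref{theorem:antonelli2022}) to almost-minimizers of the full-perimeter functional $\overline{\CE}_0$, concluding that the limit is a single piece because $m\mapsto c_n m^{(n-1)/n}$ is strictly subadditive; this simultaneously excludes multiple bubbles and handles the boundary case. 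One small imprecision in your write-up: you never actually obtain that the sublevel is homotopy \emph{equivalent} to $M$, only that $M$ is dominated by it via $\beta\circ\Phi\simeq\mathrm{id}_M$; this weaker statement is exactly hypothesis (E3) of Theorem~\ref{theorem:photography} and is all that is needed.
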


In section \ref{sect_generic} we discuss some (essentially known) results which state that solutions of \eqref{eq:PDE_Neumann} are non-degenerate in suitable \textit{generic} situations. Roughly speaking, given the equation \eqref{eq:PDE_Neumann}, one can perform an arbitrary small perturbation of a key parameter (more precisely, either the Riemannian metric or the boundary of the domain) so that the resulting problem is such that all solutions are non-degenerate for a dense set of $\eps$ and $m$. That is, the second parts of Theorems \ref{theorem:ACH-Neumann} and \ref{theorem:ACH-Dirichet}, which are stronger than the first ones, apply in \textit{almost} all cases.

\begin{remark}
	Notice that while the lower bound given in Theorem \ref{theorem:ACH-Dirichet} for the Dirichlet problem \eqref{eq:PDE_Dirichlet} is formulated in term of the topology of the whole of $M$ (as in the case without boundary), the lower bound on the number of solutions given by Theorem \ref{theorem:ACH-Neumann} for the Neumann problem \eqref{eq:PDE_Neumann} depends solely
	on the topology of $\partial M$. This fact is related to the different nature of the limiting isoperimetric-type problem linked to each case in the $m,\eps\to 0$ asymptotics.
\end{remark}

\begin{remark}\label{REMARK_partialM}
It is easy to check that one always has $\mathrm{cat}(\partial M)>1$, as $\partial M$ is a closed and compact manifold.
However, this is not always true for $M$
(consider, for instance, a closed ball in $\R^n$).
\end{remark}

\begin{example}
	Let $M$ be a closed ball in $\mathbb{R}^n$.
	Then we have $\mathrm{cat}(\partial M)=\mathscr{P}_1(\partial M)=2$,
	so that by Theorem \ref{theorem:ACH-Neumann}
	we have at least 3 solutions for the Neumann problem \eqref{eq:PDE_Neumann}
	(in the appropriate parameter regime).
	In particular, we do not obtain
	a better result in the non-degenerate case
	when $M$ is a ball.
	As $\cat(M) = 1 $ and $\mathscr{P}_1(M) = 1$,
	Theorem~\ref{theorem:ACH-Dirichet}
	only guarantees the existence of one solution for
	the Dirichlet problem~\eqref{eq:PDE_Dirichlet}.
\end{example}

\begin{example}
	The situation is different if we take $M$ to be equal to an annulus in $\R^2$. In this case, $\partial M$ is the union of two disjoint circles and this implies that $\mathrm{cat}(\partial M)=\mathscr{P}_1(\partial M)=4$,
	so Theorem \ref{theorem:ACH-Neumann} gives in general
	at least 5 solutions for the Neumann problem \eqref{eq:PDE_Neumann}, and at least 7 solutions in the non-degenerate case.
	Moreover, as $\cat(M) = 2$, Theorem~\ref{theorem:ACH-Dirichet}
	ensures the existence of at least three solutions for~\eqref{eq:PDE_Dirichlet}.
\end{example}

\begin{remark}
	The non-degeneracy assumption \ref{asu_nondegenerate} for the potential $W$ is standard and,
	moreover, generic. The coercivity assumption \ref{asu_coercive} is also
	standard and guarantees that the $\Gamma$-convergence results for the energies
	$(\mathcal{E}_{\varepsilon})_{\eps}$ holds, see, e.g., Fonseca and Tartar \cite{fonseca-tartar}, Owen, Rubinstein and Sternberg \cite{owen-rubinstein-sternberg} (in the Euclidean case).
	%	As explained later, the $\Gamma$-convergence
	%	framework is fundamental in our analysis.
	Regarding the assumption \ref{asu_subcritical},
	it is of more technical nature but
	also standard in variational problems concerning nonlinear elliptic PDEs or differential geometry.
	It ensures that these problems are compact or,
	more precisely, that both $\mathcal{E}_{\varepsilon,m}$
	and $\overline{\mathcal{E}}_{\varepsilon,m}$
	satisfy the Palais-Smale condition.
	As we prove in Section \ref{sect_generic}, it is possible to drop \ref{asu_subcritical}
	and obtain some partial results.
	However, it is not clear to us
	whether Theorems \ref{theorem:ACH-Neumann} and \ref{theorem:ACH-Dirichet}
	still hold true if \ref{asu_subcritical} is dropped.
	Indeed, the coercivity
	assumption \ref{asu_coercive} could be used to construct suitable energy decreasing projections or truncation-type operators in order to restore strong compactness of sublevel sets of $\mathcal{E}_{\varepsilon}$, but such projections or operators would not respect in general the mass constraint.
	One could also wonder about the validity of our result 
	without the compactness assumption on the manifold $M$.
	In any case, these questions, although interesting, go beyond the
	scope of this paper.
\end{remark}

\begin{remark}
Theorem \ref{theorem:ACH-Neumann} and  Theorem \ref{theorem:ACH-Dirichet} extend to \textit{vectorial double-well potentials} $W\colon \R^k \to \R$ without substantial modifications on the proofs. For simplicity, we have chosen to present the results for scalar potentials. Nevertheless, as noticed in \cite{andrade-conrado-nardulli-piccione-resende} and also as discussed below, the extension to vectorial potentials with three or more phases is not straightforward.
\end{remark}

%\subsection{Discussion on some related literature}

Our approach to prove Theorem \ref{theorem:ACH-Neumann} and Theorem \ref{theorem:ACH-Dirichet} combines some topological and variational methods with $\Gamma$-convergence properties of $\mathcal{E}_{\varepsilon}$ and tools from Geometric Measure Theory.
Variational methods relating the multiplicity of solutions of an elliptic problem with the topology of the underlying domain were already used in earlier works by Benci, Cerami and Passaseo
\cite{benci-cerami94,benci-cerami91,benci-cerami-passaseo}, see also Benci \cite{benci95} and references therein.
The results contained in this paper are mainly inspired by the recent contribution \cite{benci-nardulli-osorio-piccione},
where the manifold $M$ is supposed to be \emph{closed} and compact (cf. also the earlier result \cite{benci-nardulli-piccione}). 

Roughly speaking,
the idea of the proofs both here and in \cite{benci-nardulli-osorio-piccione}
consists on using the $\Gamma$-convergence properties of the functionals $\mathcal{E}_{\varepsilon,m}$ and $\overline{\mathcal{E}}_{\varepsilon,m}$
in order to show that under small mass constraint and for $\eps>0$ small enough the sublevel sets 
\begin{equation*}
	\mathcal{E}^c_{\eps,m} \coloneqq \left\{ u \in \mathcal{H}_m : \mathcal{E}_{\varepsilon}(u) \leq c  \right\}
	\quad \text{and}\quad
	\overline{\mathcal{E}}^c_{\eps,m} \coloneqq \left\{ u \in \mathcal{H}_{m,0} : \mathcal{E}_{\varepsilon}(u) \leq c  \right\}
\end{equation*}
are, for suitable $c>0$,
homotopically equivalent to $\partial M$ and $M$, respectively.
%depending on the chosen boundary condition. 
The topological complexity of the above sublevel sets is hence comparable to that of $\partial M$ and $M$, and we may apply the
infinite-dimensional versions of Lusternik-Schnirelman and Morse theories
(see Palais \cite{palais66,palais63})
to establish the stated lower bounds on the
number of critical points. 

There are other results in related settings
which are also obtained by similar strategies. For instance, Jerrard and Sternberg
\cite{jerrard-sternberg} provided an abstract framework which allows to use
``critical points" (in a non-smooth sense) of a $\Gamma$-limit functional in order to prove existence of critical points for the $\Gamma$-converging functionals when they are
``close" enough to the $\Gamma$-limit. They applied this result to the 2D
Allen-Cahn and 3D Ginzburg-Landau (with and without magnetic field)
functionals. Improvements of this abstract approach in the concrete setting of Ginzburg-Landau functionals on manifolds without boundary have been made recently by Colinet, Jerrard and Sternberg \cite{colinet-jerrard-sternberg} in 3D and by De Philippis and Pigati \cite{dephilippis-pigati} in arbitrary dimension. Similar ideas, but also using different methods, had been applied before by Pacard and Ritoré \cite{pacard-ritore} and Kowalczyk \cite{kowalzcyk2005} in the Allen-Cahn setting. In the same spirit,
Gaspar and Guaraco \cite{gaspar-guaraco,guaraco} proved that the number of solutions of
the Allen-Cahn equation (without volume constraint) in a
closed manifold goes to infinity as $\eps \to 0$. Their proof is restricted to
even potentials, see also Passaseo \cite{passaseo}. %Further developments are due to Chodosh and Mantoulidis \cite{chodosh-mantoulidis}, Liu, Pacard and Wei \cite{liu-pacard-wei} and Dey \cite{dey}. 
By similar methods, Bellettini and Wickramaseckera \cite{bellettini-wickramasekera} proved existence of solutions for a more general equation of Allen-Cahn type. Related results for the Ginzburg-Landau equations on manifolds were obtained by Stern \cite{stern21}, and by Pigati and Stern \cite{pigati-stern21} in the case with magnetic field, see also \cite{canevari-dipasquale-orlandi1,canevari-dipasquale-orlandi2}. Moreover, earlier work of this type exists also in the context
of the 2D Ginzburg-Landau functional considered by Bethuel, Brezis and Helein
\cite{bethuel-brezis-helein}. Almeida and Bethuel
\cite{almeida-bethuel97,almeida-bethuel} proved that there exists a lower bound
on the number of critical points according to the topological degree of the
boundary data as long as the perturbation parameter $\eps>0$ is small enough (see also Zhou and Zhou \cite{zhou-zhou} for improvements).

In our case, the $\Gamma$-limit functionals related to $\mathcal{E}_{\varepsilon,m}$ and $\overline{\mathcal{E}}_{\varepsilon,m}$ correspond to different isoperimetric-type problems on $M$ in the small volume regime, for which there are plenty of available results which provide a quite complete picture. In \cite{berard-meyer82}, Bérard and Meyer showed that
the isoperimetric profile of
a $n$-dimensional compact and smooth Riemannian manifold
becomes comparable to the isoperimetric profile of $\R^n$
as the volume tends to zero.
Morgan and Johnson \cite{morgan-johnson} took one step further
by proving that solutions to the isoperimetric problem
with small volume are close to small spheres,
so that their diameter tends to zero for vanishing volumes.
More precisely, a result in \cite{MR2529468} implies
that these spheres are exactly those
centered at a point of maximal scalar curvature
(of the manifold), see also Druet \cite{druet02}.
Analogous results are available
in the setting of compact Riemannian manifolds with boundary (without taking the boundary into account):
Bayle and Rosales \cite{bayle-rosales}
proved the asymptotic equivalence with the
relative isoperimetric profile of the \emph{half}-plane
and Fall \cite{fall10} showed
that relative isoperimetric regions of small volume
are close to half-spheres centered at the
boundary points of maximal mean curvature. Moreover, one might also consider variants of the isoperimetric problem in which the boundary part of the manifold is also taken into account, still obtaining an asymptotic equivalence with the isoperimetric problem in $\R^n$.

%\subsection{Extensions}\label{Sect_extensions}

One natural question concerns the extension of the results obtained in Theorem \ref{theorem:ACH-Neumann} and Theorem \ref{theorem:ACH-Dirichet} to \textit{systems} of Allen-Cahn-type. It turns out that the main obstacle for such extensions lies in the fine understanding of the resulting $\Gamma$-limit problem. It is known since the work of Baldo \cite{baldo} that the $\Gamma$-limit functional for vectorial multi-well potentials corresponds to a minimization problem for the perimeter of \textit{clusters} (with cardinality corresponding to the number of wells of the potential) under volume constraints. These problems can be seen as vectorial versions of the isoperimetric problem, and they are still poorly understood in the small volume regime. In particular, as pointed out in \cite{andrade-conrado-nardulli-piccione-resende}, in the vectorial case
it is not known in general whether the diameters of the different cluster components of minimizers are uniformly controlled by their respective volume, which is a crucial ingredient in our approach. However, properties of that kind are available for clusters of three sets on the plane, allowing to
extend the results of~\cite{benci-nardulli-osorio-piccione} to the case of triple-well potentials on 2D closed manifolds (see~\cite{andrade-conrado-nardulli-piccione-resende}). 
%The study of triple-well systems in 2D domains has deserved particular attention over the last decades, cf. Bronsard, Gui and Schatzman \cite{bronsard-gui-schatzman}, Sternberg and Ziemer \cite{sternberg-ziemer94} and the very recent works due to Alikakos and Geng \cite{alikakos-geng23} and Sandier and Sternberg \cite{sandier-sternberg23}, cf. also the book by Alikakos, G. Fusco and Smyrnelis \cite{alikakos-fusco-smyrnelis}.

In a different direction, most of the recent results quoted above concerning the existence of critical points of Allen-Cahn and Ginzburg-Landau functionals (e.g., \cite{andrade-conrado-nardulli-piccione-resende,bellettini-wickramasekera,colinet-jerrard-sternberg,dephilippis-pigati,gaspar-guaraco,guaraco,pigati-stern21,stern21}) are proven in the setting of manifolds without boundary.
It is natural to wonder about their validity when one considers an underlying manifold which has a non-empty boundary as we do in this paper. Moreover, not much is known concerning Ginzburg-Landau functionals with a flux constraint on the vorticity, which are naturally associated to a limiting isoperimetric-type problem in codimension 2.
To our knowledge, the only results in this direction were proven in \cite{bethuel-orlandi-smets04} and by Chiron \cite{chiron05}, where they were applied to problems of existence of traveling waves for the Gross-Pitaevskii equation.
It is possible that these results could be extended to some degree and then used to obtain multiplicity results in the spirit of the present paper.

\tableofcontents
\section{Sketch of the proofs}

In this section we outline the main ingredients used in the proofs of Theorems \ref{theorem:ACH-Neumann} and \ref{theorem:ACH-Dirichet}. We recall that a $C^1$ functional $f$ on a Banach manifold $X$
is said to satisfy the Palais-Smale condition
if every Palais-Smale sequence for $f$ admits a converging subsequence,
where $(x_n)_{n \in \N}$ is a Palais-Smale sequence for $f$ if $(f(x_n))_{n \in \N}$ is bounded and $(\lVert \de f(x_n) \rVert_{X^*})_{n \in \N}$ tends to $0$ as $n \to \infty$. The Palais-Smale condition is a classical and useful sufficient condition
in critical point theory. When it holds one only needs to
detect a change of topology in the level sets of the functional in order to
prove the existence of critical points. The interested reader can find more details in \cite{palais63}.
In our case, the Palais-Smale condition is fulfilled:
\begin{lemma}
	\label{lem:E_eps_C2-PS}
	Assume that \ref{asu_subcritical} holds.
	Then, for all $\eps>0$ and $m \in \R$
	the functional $\mathcal{E}_{\varepsilon}$ is well defined as a
	$C^2$ functional on $H^1(M,\mathbb{R})$
	and it verifies the Palais-Smale condition.
	In particular, $\mathcal{E}_{\varepsilon}$ is a $C^2$ functional
	satisfying the Palais-Smale condition on both $\mathcal{H}_m$
	and $\mathcal{H}_{m,0}$.
	Moreover,
	if $u_{\eps,m}\in \mathcal{H}_m$
	is a critical point of $\mathcal{E}_{\varepsilon,m}$,
	then there exists $\lambda_{\eps,m}$ such that
	$(u_{\eps,m},\lambda_{\eps,m})$
	solves \eqref{eq:PDE_Neumann}
	and 
	if $u_{\eps,m}\in \mathcal{H}_{m,0}$ is a critical point of
	$\overline{\mathcal{E}}_{\varepsilon,m}$,
	then there exists $\lambda_{\eps,m}$
	such that $(u_{\eps,m},\lambda_{\eps,m})$
	solves \eqref{eq:PDE_Dirichlet}.
\end{lemma}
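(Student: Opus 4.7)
The plan is to handle the three assertions in sequence: the $C^2$ regularity of $\mathcal{E}_{\varepsilon}$ on $H^1(M,\R)$, the Palais--Smale condition (first on the full space, then transferred to the closed affine subspaces $\mathcal{H}_m$ and $\mathcal{H}_{m,0}$), and finally the Euler--Lagrange characterization of constrained critical points via Lagrange multipliers.

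For the regularity, the kinetic term $u\mapsto \tfrac{\varepsilon}{2}\int_M|\nabla u|^2\,dv_g$ is a continuous quadratic form and hence $C^\infty$ on $H^1$. For the potential term, since $W\geq 0$ with $W(0)=0$ one has $W'(0)=0$, so integrating the bound of \ref{asu_subcritical} twice yields pointwise estimates $|W(s)|\leq C(1+|s|^p)$, $|W'(s)|\leq C(1+|s|^{p-1})$, and $|W''(s)|\leq C(1+|s|^{p-2})$. The continuous Sobolev embedding $H^1(M)\hookrightarrow L^p(M)$ (valid since $p\leq 2^*$) combined with standard Nemytskii operator arguments and Hölder's inequality with exponents $(p/(p-2),p,p)$ for the bilinear Hessian term then shows that $u\mapsto\int_M W(u)\,dv_g$ is a $C^2$ map on $H^1$, with first differential
\[
	d\mathcal{E}_{\varepsilon}(u)[\varphi]=\varepsilon\int_M \nabla u\cdot\nabla\varphi\,dv_g+\frac{1}{\varepsilon}\int_M W'(u)\varphi\,dv_g,
\]
and an analogous bilinear expression for the Hessian.

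For Palais--Smale on $H^1$, let $(u_n)\subset H^1(M,\R)$ be a PS sequence. The bound on $\mathcal{E}_\varepsilon(u_n)$ yields $\|\nabla u_n\|_{L^2}^2\leq 2C/\varepsilon$ and $\int_M W(u_n)\leq C\varepsilon$, while integrating \ref{asu_coercive} gives $W(s)\geq \tfrac{\alpha}{2}s^2-C'$ for $|s|\geq R$, which forces $\|u_n\|_{L^2}$ (and hence $\|u_n\|_{H^1}$) to be bounded. Extracting a subsequence $u_n\rightharpoonup u$ in $H^1$, the compact embedding $H^1\hookrightarrow L^p$ (which is strict because $p<2^*$) yields $u_n\to u$ in $L^p$, and the growth bound on $W'$ gives $W'(u_n)\to W'(u)$ in $L^{p'}$, whence $\frac{1}{\varepsilon}\int_M W'(u_n)(u_n-u)\,dv_g\to 0$. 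Testing $d\mathcal{E}_\varepsilon(u_n)\to 0$ in $(H^1)^*$ against $u_n-u$ and subtracting the potential term yields $\varepsilon\int_M|\nabla(u_n-u)|^2\,dv_g\to 0$, so $u_n\to u$ strongly in $H^1$. The transfer to the affine subspaces $\mathcal{H}_m$ and $\mathcal{H}_{m,0}$ is standard: a PS sequence for $\mathcal{E}_{\varepsilon,m}$ (resp.\ $\overline{\mathcal{E}}_{\varepsilon,m}$) admits a Lagrange multiplier sequence $(\lambda_n)\subset\R$ that is bounded because $(u_n)$ is bounded in $H^1$; after extracting $\lambda_n\to\lambda$ it becomes a PS sequence in the ambient space for $u\mapsto \mathcal{E}_\varepsilon(u)-\lambda\int_M u\,dv_g$, reducing to the previous argument.

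Finally, if $u_{\varepsilon,m}\in\mathcal{H}_m$ is a critical point of $\mathcal{E}_{\varepsilon,m}$, the Lagrange multiplier theorem (applicable since $\varphi\mapsto\int_M\varphi\,dv_g$ is a non-zero continuous linear functional on $H^1$) provides $\lambda_{\varepsilon,m}\in\R$ such that
\[
	\varepsilon\int_M\nabla u_{\varepsilon,m}\cdot\nabla\varphi\,dv_g+\frac{1}{\varepsilon}\int_M W'(u_{\varepsilon,m})\varphi\,dv_g=\lambda_{\varepsilon,m}\int_M\varphi\,dv_g,\quad \forall\varphi\in H^1(M),
\]
which is the weak form of \eqref{eq:PDE_Neumann}; the Neumann boundary condition is recovered from the boundary term after elliptic bootstrap regularity (available because $W\in \CC^3_{\loc}$). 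The Dirichlet case is identical with $H^1_0(M)$ in place of $H^1(M)$, the homogeneous boundary condition being built into the space. The main technical point is the role of \ref{asu_subcritical}: strict inequality $p<2^*$ is essential to secure the compactness of the Sobolev embedding into $L^p$ used in the Palais--Smale argument, while $p\leq 2^*$ already suffices for the $C^2$ regularity.
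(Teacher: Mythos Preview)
Your argument is correct and is exactly the standard proof the paper alludes to without giving details (it merely remarks that the result is standard and cites \cite[Proposition~4.12]{benci-nardulli-osorio-piccione}). Note only that you invoke \ref{asu_coercive} to obtain the $L^2$-bound in the Palais--Smale argument on the full space $H^1(M,\R)$, whereas the lemma's stated hypothesis is just \ref{asu_subcritical}; this is immaterial since \ref{asu_coercive} is a standing assumption throughout the paper, and on the constrained spaces $\mathcal{H}_m$ and $\mathcal{H}_{m,0}$ (the only ones actually used later) the mass constraint together with the Poincar\'e--Wirtinger inequality already supplies the missing $L^2$-control without appealing to \ref{asu_coercive}.
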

The proof of Lemma \ref{lem:E_eps_C2-PS} is standard. In particular,
compactness of the Palais-Smale sequences for $\mathcal{E}_{\varepsilon}$ follows from the
compactness of $M$ along with the subcritical behavior of the potential
provided by assumption~\ref{asu_subcritical}
(for more details see, for instance, \cite[Proposition 4.12]{benci-nardulli-osorio-piccione}).

According to the previous discussion, the proofs of Theorems~\ref{theorem:ACH-Neumann}
and \ref{theorem:ACH-Dirichet} boil down to determining changes of topology in the sublevel sets of the functionals $\CE_{\eps,m}$ and $\overline{\CE}_{\eps,m}$. To do this, we employ the
so-called \emph{photography method},
which allows to understand some topological invariants of 
a sublevel of a functional through a homotopy equivalence with another topological space for which these invariants are more easily computable.
More precisely, we will use the following result
(see, e.g., \cite{benci95,benci-cerami94}).

\begin{theoremletter}
	\label{theorem:photography}
	Let $X$ be a topological space,
	$\mathfrak{M}$ be a $C^2$-Hilbert manifold,
	$f\colon\mathfrak{M}\to\mathbb{R}$ be a $C^1$-functional.
	For $c\in\mathbb R$, let $f^c\coloneqq\{u\in\mathfrak{M} : f(u)\le c\}$
	be the $c$-sublevel set of $f$.
	Assume that
	\begin{enumerate}
		\item[$($$\mathrm{E1}$$)$]
			$f$ is bounded below;
		\item[$($$\mathrm{E2}$$)$]
			$f$ satisfies the Palais--Smale condition;
		\item[$($$\mathrm{E3}$$)$]
			There exists $c \in \R$ and two continuous maps
			$\Psi_{R}\colon X\rightarrow f^c$ and
			$\Psi_{L}\colon f^c\rightarrow X$
			such that $\Psi_{L}\circ\Psi_{R}$ is homotopic
			to the identity map of $X$.
	\end{enumerate} 
	Then, the number of critical points in $f^c$ 
	is greater than $\cat(X)$. If $\mathfrak{M}$ is contractible and $\cat(X)>1$,
	there is at least another critical point of $f$ outside
	$f^c$.
	Moreover, there exists $c_0\in(c,\infty)$ such that one of
	the two following conditions holds:
	\begin{itemize}
		\item[{\rm (i)}]
			$f^{c_0}$ contains infinitely many critical points;
		\item[{\rm (ii)}]
			$f^{c}$ contains $\mathscr{P}_1(X)$ critical points
			and $f^{c_0}\setminus f^{c}$
			contains $\mathscr{P}_1(X)-1$ critical points
			if counted with their multiplicity.
			More exactly, we have the following relation:
			\begin{equation}
				\label{eq:morserelation}
				\sum_{u\in {\rm Crit}(f)}
                t^{\mu(u)}
                =
                t\mathscr{P}_t(X)
                +t^2[\mathscr{P}_t(X)-1]
                +t(1+t)\mathcal{Q}(t),
			\end{equation}
                where 
                ${\rm Crit}(f)$ is the set of critical points of
			$f$,
                $\mu(u)$ is the Morse index of the solution $u$
			and $\mathcal{Q}(t)$ is a polynomial
			with nonnegative integer coefficients.
			In particular, if all the critical points are non-degenerate,
			there are at least $\mathscr{P}_1(X)$
			critical points in $f^{c_0}$
			and $\mathscr{P}_1(X)-1$ critical points in $f^{c_0}\setminus f^{c}$.
	\end{itemize}
\end{theoremletter}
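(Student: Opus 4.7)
The strategy is the classical photography / Lusternik--Schnirelmann / Morse scheme adapted to the hypotheses (E1)--(E3). The starting point is to translate the weak data (E3) into a categorical inequality $\cat(f^c)\ge\cat(X)$. Given a categorical cover $f^c = B_1\cup\cdots\cup B_k$ by sets open and contractible in $f^c$, I would set $A_i := \Psi_R^{-1}(B_i)$, which are open in $X$ and cover $X$. Each contracting homotopy of $B_i$ inside $f^c$, after post-composition with $\Psi_L$ and pre-composition with $\Psi_R|_{A_i}$, gives a null-homotopy in $X$ of the restriction of $\Psi_L\circ\Psi_R$ to $A_i$; combined with $\Psi_L\circ\Psi_R\simeq\mathrm{id}_X$ this exhibits $A_i$ as contractible in $X$, yielding $\cat(X)\le k$.

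Given (E1)--(E2), the standard Lusternik--Schnirelmann minimax scheme then applies: the levels $c_j := \inf\{c'\le c : \cat(f^{c'})\ge j\}$ for $j=1,\dots,\cat(X)$ are critical values of $f$ by Palais' deformation lemma, and coincidences $c_j = c_{j+1}$ are absorbed by critical sets of higher LS category, so either case (i) of the conclusion occurs at this stage, or at least $\cat(X)$ distinct critical points lie in $f^c$. For the extra critical point above $c$ when $\mathfrak{M}$ is contractible and $\cat(X)>1$, I would exploit that $\Psi_R$ is null-homotopic in $\mathfrak{M}$ and define
\[
c^{*} \;:=\; \inf_{H}\;\max_{(x,s)\in X\times[0,1]} f(H(x,s)),
\]
the infimum taken over null-homotopies $H\colon X\times[0,1]\to\mathfrak{M}$ of $\Psi_R$. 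If one had $c^{*}\le c$, an approximating $H$ with image in $f^c$ would produce, via $\Psi_L$, a contraction of $X$ in itself, contradicting $\cat(X)>1$. Hence $c^{*}>c$, and a standard deformation / pseudo-gradient argument based on (E1)--(E2) realizes $c^{*}$ as a critical value.

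For the Morse-theoretic refinement, the factorization $\Psi_L\circ\Psi_R\simeq\mathrm{id}_X$ makes $(\Psi_R)_{*}\colon H_{*}(X)\to H_{*}(f^c)$ a split monomorphism, so $\mathscr{P}_t(f^c)\ge\mathscr{P}_t(X)$ coefficientwise. The Morse inequalities on $f^c$ give
\[
\sum_{u\in\mathrm{Crit}(f)\cap f^c} t^{\mu(u)} \;=\; \mathscr{P}_t(f^c) + (1+t)\,Q_1(t) \;\ge\; \mathscr{P}_t(X) + (1+t)\,Q_1(t).
\]
For the complement, contractibility of $\mathfrak{M}$ together with the long exact sequence of the pair $(\mathfrak{M},f^c)$ yields $\mathscr{P}_t(\mathfrak{M},f^c) = t\,[\mathscr{P}_t(f^c)-1] \ge t\,[\mathscr{P}_t(X)-1]$. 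Choosing $c_0>c$ large enough (otherwise case (i) is forced), the deformation lemma allows one to replace $f^{c_0}$ by a deformation retract of $\mathfrak{M}$, so the Morse inequalities for the pair $(f^{c_0},f^c)$ deliver
\[
\sum_{u\in\mathrm{Crit}(f)\cap(f^{c_0}\setminus f^c)} t^{\mu(u)} \;=\; \mathscr{P}_t(f^{c_0},f^c) + (1+t)\,Q_2(t).
\]
Adding the two Morse polynomials and repackaging the correction terms into $\mathcal{Q}(t)$ produces the relation~\eqref{eq:morserelation}, with the counts $\mathscr{P}_1(X)$ below level $c$ and $\mathscr{P}_1(X)-1$ above it following by setting $t=1$.

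The hardest part will be the last step: tracking the exact shape of~\eqref{eq:morserelation}. One must choose $c_0$ with care so that options (i) and (ii) genuinely exhaust the possibilities, verify coefficientwise nonnegativity of $\mathcal{Q}(t)$, and identify the precise right-hand side by a faithful computation of the relative Poincar\'e polynomials through the long exact sequences of the triples $(\mathfrak{M}, f^{c_0}, f^c)$, together with the index-shift arising from passing from $\mathscr{P}_t(f^c)$ to $\mathscr{P}_t(\mathfrak{M},f^c)$. The first three ingredients---the categorical inequality, the LS count, and the mountain-pass-type critical point above $c$---are essentially textbook applications once (E3) has been converted into the categorical inequality in the first paragraph.
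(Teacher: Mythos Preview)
The paper does not prove this theorem at all; it is quoted as a known result with a reference to Benci \cite{benci95} and Benci--Cerami \cite{benci-cerami94}, and the rest of the paper only \emph{applies} it. Your sketch is precisely the classical route taken in those references: the categorical inequality $\cat(f^c)\ge\cat(X)$ extracted from (E3), the Lusternik--Schnirelmann minimax levels giving $\cat(X)$ critical points in $f^c$, the mountain-pass level $c^{*}>c$ obtained from null-homotopies in the contractible $\mathfrak{M}$, and finally the Morse inequalities for the pair $(\mathfrak{M},f^c)$ combined with the splitting of $H_*(X)$ inside $H_*(f^c)$. So your proposal and the argument the paper is invoking coincide.

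One point worth flagging, since you already single out the exact shape of \eqref{eq:morserelation} as the delicate step: the computation you outline naturally produces
\[
\sum_{u\in\mathrm{Crit}(f)} t^{\mu(u)} \;=\; \mathscr{P}_t(X) + t\big[\mathscr{P}_t(X)-1\big] + (1+t)\,\mathcal{Q}(t),
\]
which differs from the displayed relation by a global factor of $t$ on the right-hand side. The version you derive is the one that appears in the Benci--Cerami literature; the extra factor of $t$ in \eqref{eq:morserelation} as written would force every critical point to have Morse index at least $1$, which does not follow from (E1)--(E3) alone. At $t=1$ both formulas give the same numerical lower bound $2\mathscr{P}_1(X)-1$, so the applications in the paper are unaffected, and the discrepancy is more plausibly a transcription issue in the statement than a gap in your argument.
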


Since $\mathcal{E}_{\varepsilon}$ is bounded below (as it is nonnegative)
and satisfies the Palais-Smale condition by Lemma \ref{lem:E_eps_C2-PS},
Theorem~\ref{theorem:photography}
implies Theorem~\ref{theorem:ACH-Neumann}
if we are able to show that for all
$m$ and $\eps$ sufficiently small there exist $c > 0$ and two continuous maps
$\mathcal{P}_{\varepsilon,m}\colon \partial M \to \mathcal{E}^c_{\eps,m}$
(the \emph{photography} map)
and 
$\mathcal{B}\colon \mathcal{E}^c_{\eps,m} \to \partial M$
(the \emph{barycenter} map)
such that $\mathcal{B}\circ \mathcal{P}_{\varepsilon,m}
\colon \partial M \to \partial M$
is homotopic to the identity map.
Similarly, Theorem~\ref{theorem:ACH-Dirichet} will follow
by showing the existence of two continuous maps 
$\mathcal{P}_{\varepsilon,m}\colon  M \to \overline{\mathcal{E}}^c_{\eps,m}$
and 
$\mathcal{B}\colon \overline{\mathcal{E}}^c_{\eps,m} \to  M$
such that $\mathcal{B}\circ \mathcal{P}_{\varepsilon,m} \colon M \to M$
is homotopic to the identity map of $M$.
Analogously to \cite{benci-nardulli-osorio-piccione}, the two main ingredients that allow to construct the maps
$\mathcal{P}_{\epsilon,m}$ and $\mathcal{B}$ are the following:
\begin{enumerate}
	\item The $\Gamma$-convergence,
            as $\varepsilon \to 0^+$,
		of the energies $\mathcal{E}_{\varepsilon}$
		towards some form of isoperimetric problem,
		the latter depending one the chosen boundary condition.
		In the case of Neumann boundary conditions,
		the $\Gamma$-limit is the classical relative isoperimetric problem,
		i.e., the boundary of $M$ is not taken into account
		on the computation of the perimeter.
		If one considers homogeneous Dirichlet boundary conditions,
		then the $\Gamma$-limit takes into account
		the perimeter intersecting the boundary $\partial M$.
		%This allows to define the photography map 
		%$\mathcal{P}_{\varepsilon,m}$ as long as $\varepsilon,m$
		%are sufficiently small
		%(see Propositions~\ref{prop:photography-sublevel}
		%and~\ref{prop:photography-continuity})
		%both for the Neumann and Dirichlet case,
		%and to obtain important informations about the 
		%sublevels $\mathcal{E}_{\varepsilon,m}^c$
		%and $\overline{\mathcal{E}}_{\varepsilon,m}^c$
		%that contains 
		%the image of the photography.
	\item The fact that the isoperimetric profile and the relative isoperimetric profile on compact manifolds with boundary
		are asymptotic as $m \to 0^+$ to the isoperimetric profiles on the
		Euclidean space and half-space, respectively. 
        As a consequence, for sufficiently small volumes any
        ``almost minimizer'' (a concept that will be formally defined later) of the relative isoperimetric problem
        concentrates almost its entire volume around a point 
        on the boundary of $M$.
		%		as we have the equivalence with
		%		the isoperimetric problem on the half plane.
		For the isoperimetric problem, the same concentration phenomena
		 occurs but for points on $M$. See Figure \ref{FIG_am} for an explanatory picture.
		%		The same occurs in the Dirichlet case,
		%		except for the fact that in this case
		%		one finds the isoperimetric problem on the plane instead of the
		%		isoperimetric problem on the half-plane.
  \end{enumerate}
		Combining this two ingredients, we find that in the Neumann case
		any function that belongs to the
		smallest sublevel
		$\mathcal{E}_{\varepsilon,m}^c$
		that contains the image of the photography map
		has almost all its mass
		in a geodesic ball centered in a point of $\partial M$.
		By composing with the nearest point projection on $\partial M$, it follows that the barycenter map
		$\mathcal{B}\colon \mathcal{E}_{\varepsilon,m}^c \to \partial M$
		is well defined and continuous.
		In the Dirichlet case, we have that an almost minimizer
		of $\overline{\mathcal{E}}_{\epsilon,m}$
		has almost all its mass in a geodesic ball centered in a point of $M$,
		so that the barycenter map 
		$\mathcal{B}\colon \mathcal{E}_\varepsilon^c \to  M$
		is well defined and continuous. Conversely, one finds the photography maps $\CP_{\eps,m}$ by constructing almost minimizers of $\CE_{\eps}$ supported on geodesic balls centered at arbitrary points of $\partial M$ in the Neumann case and of $M$ in the Dirichlet case.

\begin{figure}
    \centering
    \begin{picture}(400,200)
        \put(0,0){\includegraphics[width=\textwidth]{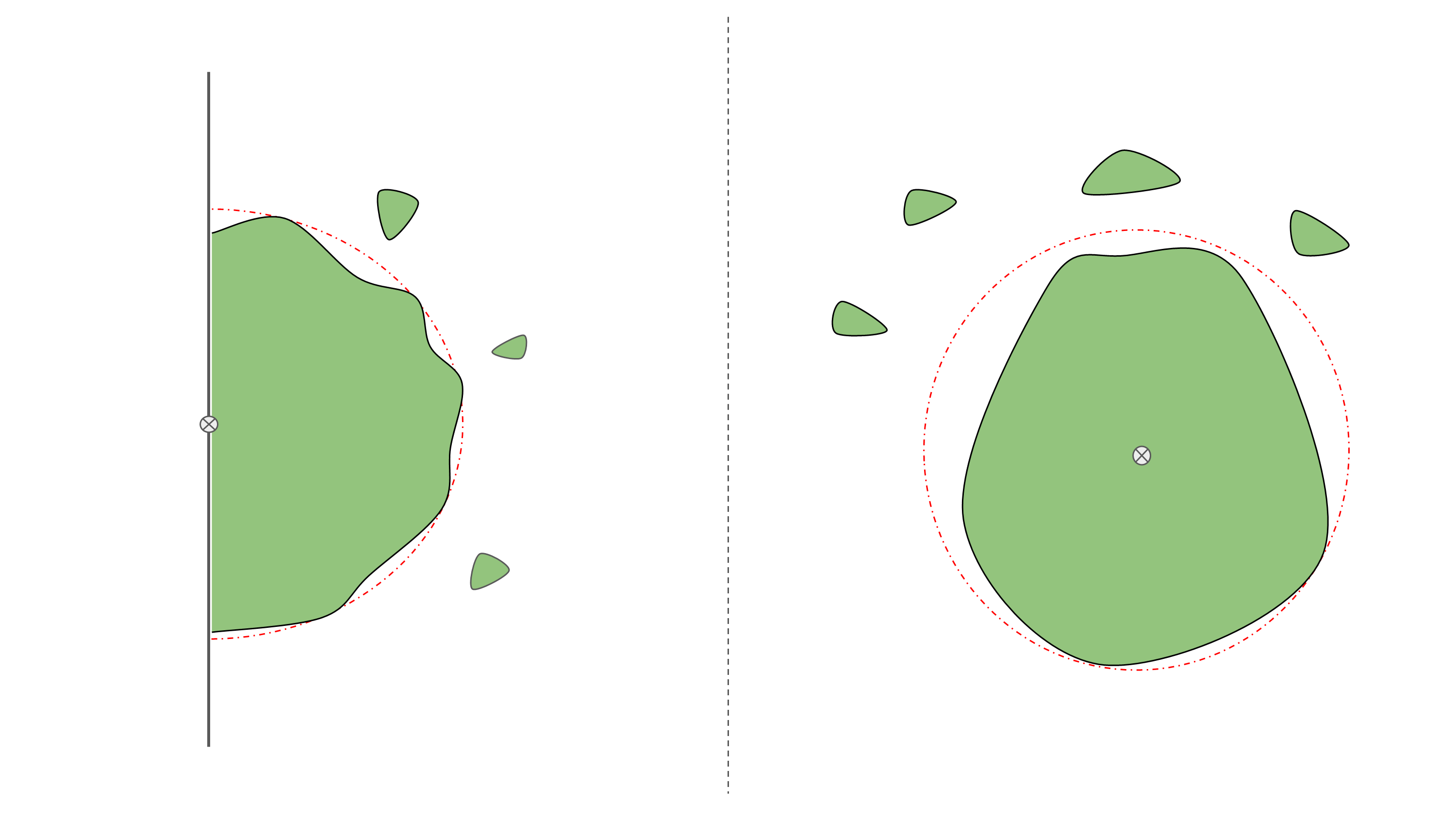}}
        \put(35,190){$\partial M$}
        \put(160,120){$M$}
        \put(255,190){$M$}
    \end{picture}
    \caption{The filled regions represent an arbitrary almost minimizer of the relative isoperimetric problem (left) and the isoperimetric problem (right). In the former case, most of the volume is concentrated inside a ball (dotted line) 
    a half-ball centered at the boundary.
    In the latter case, concentration occurs around some point of $M$. However, some of the volume might be away from these balls, but it is always a small portion of the volume which can be controlled. In particular, half-balls and balls  are almost minimizers of the relative isoperimetric and isoperimetric problems, repectively.}
    \label{FIG_am}
\end{figure}

\begin{remark}
	\label{rem:cat-partial-M}
	To apply Theorem~\ref{theorem:photography}
	in the proof of Theorem~\ref{theorem:ACH-Neumann},
	one needs that $\cat(\partial M)>1$
	and that $\mathcal{H}_{m}$ is contractible.
	Since these two conditions are always satisfied (in fact, $\mathcal{H}_{m}$ is convex),
	they are omitted in the statement of Theorem~\ref{theorem:ACH-Neumann}. This is also the reason why the condition $\mathrm{cat}(M)>1$, not necessarily fulfilled a priori, appears in the statement of Theorem \ref{theorem:ACH-Dirichet}.
\end{remark}

\section{Proof of Theorem~\ref{theorem:ACH-Neumann}}

\subsection{The relative isoperimetric problem}
\label{sect_iso}
We recall some standard notations:
let $BV(M,\R)$ denotes the set of functions of bounded variation defined on $M$;
for any $N \subset M$, we denote by $\mathbf{1}_N$ its characteristic function;
$\CH^{n}$ the $n$--dimensional Hausdorff measure on
$\R^{\tilde{n}}$; for any measurable set $A \subset M$,
we denote by $\partial^*A$ the reduced boundary relative to $M$,
which is the subset of $\partial A$ whose notion of 
measure-theoretic normal vector exists 
and has length equal to one.
We denote by $\mathcal{C}_g(M)$ the set of Caccioppoli subsets of $M$,
hence 
\[
	\mathcal{C}_g(M)\coloneqq
	\Big\{
		\Omega\subset M:\,
		\Omega \text{ is measurable and }
		\mathcal{H}^{n-1}(\partial^*\Omega) < +\infty
	\Big\}.
\]
Moreover, let $\sigma > 0$ be the following constant, which depends only on $W$:
\begin{equation}
	\label{eq:def-sigma}
	\sigma \coloneqq 2\, \inf\left\{\int_{0}^{1}\sqrt{W(\gamma(s))}|\gamma'(s)|\de s:
	\gamma \in C^1([0,1],\mathbb{R}), \gamma(0) = 0, \gamma(1) = 1\right\}.
\end{equation}
For $u \in L^1(M,\R)$ we set
\begin{equation*}
	\mathcal{E}_0(u)\coloneqq  \begin{cases}
		\sigma\CH^{n-1}(\partial^*\Omega \cap \mathrm{int}(M)),
		&\mbox{ if } u=\mathbf{1}_{\Omega},\\
		+\infty, &\mbox{ otherwise}.
	\end{cases}
\end{equation*}

Alternatively, one can define $\mathcal{E}_0$ as
the relative perimeter of $\Omega$,
which is defined as the total measure of
$\lvert \nabla\mathbf{1}_{\Omega} \rvert$.
The fact $\mathbf{1}_{\Omega} \in BV(M,\R)$
implies that both definitions are equivalent,
see Giusti \cite{giusti84}
(see also Sternberg \cite{sternberg88} for more details).

For fixed $m \in [0,\mathrm{vol}(M)]$, we define 
\begin{equation}
	\label{IP_regular}
	I_{M}(m)\coloneqq 
	\inf\left\{ \mathcal{H}^{n-1}(\partial^*\Omega\cap\mathrm{int}(M)):
		\Omega \in \mathcal{C}_g(M)
		\text{ and }
	\int_M \mathbf{1}_{\Omega} \de v_g = m\right\}.
\end{equation}
Notice that $I_{M}\colon [0,\mathrm{vol}(M)]\to \mathbb{R}$
is the \emph{isoperimetric profile function} of $M$
and that 
\begin{equation}
	\label{eq:I_M-infE_0}
	I_M(m) = \frac{1}{\sigma} \inf\left\{
		\mathcal{E}_0(u):
		u \in L^1(M,\mathbb{R}),\,
		\int_{M}u\, \de v_g = m
	\right\}.
\end{equation}
The same problem can be stated in the euclidean semi-space
\[
	\mathbb{R}^n_+=\left\{ z \in \mathbb{R}^n: z^n \ge 0 \right\}
	= \mathbb{R}^{n-1}\times [0,+\infty[,
\]
so that the function
$I_{\mathbb{R}^n_+}\colon (0,+\infty) \to \mathbb{R}$
is defined analogously
and there exists a dimensional constant $c_n^+ > 0$
such that 
\begin{equation}
	\label{eq:def-cn}
	I_{R^{n}_{+}}(m) = c_n^+ m^{\frac{n-1}{n}}.
\end{equation}
When $m$ is small, there exists a link between 
the isoperimetric function $I_M$
and its euclidean counterpart.
More formally, Bayle and Rosales \cite{bayle-rosales}
(see also Fall~\cite{fall10})
have shown that
\begin{equation}
	\label{eq:IM-to-IR+}
	I_M(m) = \big(1 + \mathcal{O}(m^{1/n})\big)I_{\mathbb{R}^n_+}(m).
\end{equation}

\subsection{$\Gamma$-convergence}
\label{sect_GC}
We now state the two main $\Gamma$-convergence results we need,
starting with the the most celebrated one,
first proven by Modica and Mortola \cite{modica,modica-mortola}, and later extended to the vector-valued setting by several authors:
Baldo \cite{baldo},
Fonseca and Tartar \cite{fonseca-tartar} and
Sternberg \cite{sternberg88}.
In \cite{fonseca-tartar}, the double-well case was considered
%	i.e., the same we are considering,
without the subcritical growth condition \ref{asu_subcritical},
meaning that their result is the most well adapted to our situation.
We also point out that in the above mentioned papers the results
are proven in the Euclidean case,
but it was later shown
(see Benci et al. \cite{benci-nardulli-osorio-piccione}
and the references therein)
that they can be extended to the Riemannian setting.
\begin{theorem}[cf. \cite{fonseca-tartar}, Theorem 3.4 and \cite{benci-nardulli-osorio-piccione}, Proposition 3.3]
	\label{theorem:G-convergence}
	Assume that \ref{asu_nondegenerate} and \ref{asu_coercive} hold.
	Then, the following statements hold:
	\begin{enumerate}[i)]
		%		\item \textbf{Compactness}:
		%			If the family $(v_\eps)_{\eps > 0} \subset H^1(M,\R)$
		%			is such that 
		%			\begin{equation}
		%				\sup_{\eps > 0}\mathcal{E}_{\varepsilon}(v_\eps)<+\infty,
		%			\end{equation}
		%			then there exists $v_0 \in L^1(M,\R)$
		%			and a sequence $(\eps_n)_{n \in \N}$ 
		%			such that $\eps_n \to 0^+$ and $v_{\eps_n} \to v_0$ in $L^1(M,\R)$.
		\item \text{\rm ($\Gamma$-$\liminf$ inequality)}:
			If $(\eps_k)_{k \in \N}\subset ]0,+\infty[$
			is such that $\eps_k \to 0^+$
			and $(u_{\eps_k})_{k \in \N} \subset H^1(M,\R)$
			is such that $u_{\eps_k} \to u_0$ in $L^1(M,\R)$,
			then $\liminf_{k \to \infty}
			\mathcal{E}_{\varepsilon_k}(u_{\eps_k}) \ge \mathcal{E}_0(u_0)$;
		\item \text{\rm ($\Gamma$-$\limsup$ (in)equality)}:
			For any $u_0 \in L^1(M,\R)$ such that $u_0 = \mathbf{1}_{\Omega}$
			for some $\Omega \subset M$ and for every sequence
			$(\eps_k)_{k \in \N}\subset ]0,+\infty[$ such that $\eps_k \to 0^+$,
			there exists $(u_{\eps_k})_{k \in \N} \subset H^1(M,\R)$
			such that $u_{\eps_k} \to u_0$ in $L^1(M,\R)$,
			\[
				\int_{M} u_{\varepsilon_k} \de v_g = \int_M u_0\de v_g
				\quad\text{and}\quad
				\limsup_{k \to \infty}
				\mathcal{E}_{\varepsilon_k}(u_{\eps_k}) \le \mathcal{E}_0(u_0).
			\]
	\end{enumerate}
\end{theorem}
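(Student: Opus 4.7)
For \emph{i)}, I would employ the classical Modica trick. Define $\Phi\colon\R\to\R$ by $\Phi(t) \coloneqq \int_0^t \sqrt{2W(s)}\,\de s$. The pointwise AM--GM inequality gives
\[
    \eps\,\frac{|\nabla u|^2}{2} + \frac{W(u)}{\eps} \;\ge\; \sqrt{2W(u)}\,|\nabla u| \;=\; |\nabla(\Phi\circ u)|,
\]
so that $\mathcal{E}_\eps(u)\ge|D(\Phi\circ u)|(M)$, the total variation of $\Phi\circ u$ on $M$. Assume $u_{\eps_k}\to u_0$ in $L^1(M,\R)$ and $\liminf_k\mathcal{E}_{\eps_k}(u_{\eps_k})<+\infty$ (otherwise there is nothing to prove). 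The coercivity assumption \ref{asu_coercive} provides a uniform $L^2$-bound on $(u_{\eps_k})$ and, combined with continuity and polynomial control of $\Phi$, yields $\Phi\circ u_{\eps_k}\to\Phi\circ u_0$ in $L^1(M,\R)$ along a subsequence. Lower semicontinuity of total variation then gives $\liminf_k\mathcal{E}_{\eps_k}(u_{\eps_k})\ge|D(\Phi\circ u_0)|(M)$. Moreover $\int_M W(u_{\eps_k})\,\de v_g=O(\eps_k)$, so by \ref{asu_nondegenerate} one has $u_0\in\{0,1\}$ almost everywhere, hence $u_0=\mathbf{1}_\Omega$ for some Caccioppoli set $\Omega\in\mathcal{C}_g(M)$. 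A direct computation then identifies $|D(\Phi\circ\mathbf{1}_\Omega)|(M)$ with $\mathcal{E}_0(u_0)=\sigma\,\mathcal{H}^{n-1}(\partial^*\Omega\cap\mathrm{int}(M))$.

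For \emph{ii)}, I would build a recovery sequence from the 1D optimal transition profile. By a density result for sets of finite perimeter (see, e.g., \cite{giusti84}), it is enough to treat the case where $\Sigma\coloneqq\partial\Omega\cap\mathrm{int}(M)$ is a smooth hypersurface meeting $\partial M$ transversally, since both the volume of $\Omega$ and its relative perimeter are continuous along such an approximation. Let $d\colon M\to\R$ denote the signed distance to $\Sigma$ (negative inside $\Omega$) and let $q\colon\R\to[0,1]$ be the heteroclinic minimizer of the 1D Modica--Mortola functional connecting $1$ at $-\infty$ to $0$ at $+\infty$, satisfying $(q')^2=2W(q)$. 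Setting $u_{\eps_k}(x)\coloneqq q(d(x)/\eps_k)$ in a tubular neighborhood of $\Sigma$ (and extending by $0$ or $1$ outside), the coarea formula for $d$ together with the ODE $(q')^2=2W(q)$ gives $\mathcal{E}_{\eps_k}(u_{\eps_k})\to\sigma\,\mathcal{H}^{n-1}(\Sigma)$, while $u_{\eps_k}\to\mathbf{1}_\Omega$ in $L^1(M,\R)$ by dominated convergence.

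The main obstacle, distinguishing this statement from the classical Modica--Mortola result, is the \emph{exact} mass constraint $\int_M u_{\eps_k}\,\de v_g=\int_M u_0\,\de v_g$. The construction above satisfies it only up to an $O(\eps_k)$ error, and a correction that does not spoil the $\limsup$ is needed. I would implement it by replacing the profile $q(d(x)/\eps_k)$ by $q((d(x)+\eps_k t_k)/\eps_k)$ for a bounded sequence of real shifts $(t_k)$: the function $M_k(t)\coloneqq\int_M q((d(x)+\eps_k t)/\eps_k)\,\de v_g$ is continuous and strictly monotone in $t$ (as $q$ is strictly monotone and the level sets of $d$ are $(n-1)$-rectifiable with positive measure), so the intermediate value theorem yields a $t_k$ giving the required mass. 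Since the corresponding geometric displacement of the interface is of order $\eps_k$ and takes place inside a tubular neighborhood away from $\partial M$, the relative perimeter of the shifted set differs from $\mathcal{H}^{n-1}(\Sigma)$ by $o(1)$, and the bound $\limsup_k\mathcal{E}_{\eps_k}(u_{\eps_k})\le\mathcal{E}_0(u_0)$ is preserved.
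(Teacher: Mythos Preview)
Your approach is correct and matches the standard Modica--Mortola argument that the paper invokes. Note that the paper does not actually prove this theorem: it is cited from \cite{fonseca-tartar} and \cite{benci-nardulli-osorio-piccione}, with only Remark~\ref{rem:epsilon-approximation} sketching the recovery-sequence construction. Your outline agrees with that remark in all essential points, including the use of the signed distance function and a shift parameter (your $t_k$, their $\delta_{\varepsilon,\Omega}$) to enforce the exact mass constraint via the intermediate value theorem.

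The only noteworthy difference is in the choice of one-dimensional profile. You use the genuine heteroclinic $q$ solving $(q')^2=2W(q)$, which only reaches the wells asymptotically; the paper follows Modica's original construction and uses the regularized profile $\widetilde{q}_\varepsilon$ solving $\widetilde{q}_\varepsilon'=\tfrac{1}{\varepsilon}\sqrt{\varepsilon^{3/2}+2W(\widetilde{q}_\varepsilon)}$, which reaches $0$ and $1$ in a finite distance $\eta_\varepsilon\to 0$. The regularized profile has two practical advantages: the recovery function is \emph{exactly} $0$ or $1$ outside an $\eta_\varepsilon$-tubular neighborhood of $\partial\Omega$ (so no tail control is needed and the shift $\delta_{\varepsilon,\Omega}$ can be confined to $[0,\eta_\varepsilon]$), and it works without appealing to the exponential decay of $q$ furnished by \ref{asu_nondegenerate}. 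Your version is equally valid once you use \ref{asu_nondegenerate} to bound the tail contribution, but the paper's later use of this construction (e.g., in Proposition~\ref{prop:photography-continuity} and the Dirichlet case, where compact support inside $\mathrm{int}(M)$ is exploited) relies specifically on the finite-range property of $\widetilde{q}_\varepsilon$.
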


\begin{remark}
	\label{rem:epsilon-approximation}
	The interested reader can find
	in~\cite[Proposition 3.3]{benci-nardulli-osorio-piccione}
	a detailed and explicit construction
	of the functions $u_{\varepsilon}$
	that approximate $u_0$ as required by
	the $\Gamma$-$\limsup$ statement of Theorem~\ref{theorem:G-convergence},
	which is the Riemannian version of the construction
	given by Modica in~\cite{modica} in the Euclidean setting.
	We give here just the idea of this construction, following \cite{modica}.
	For every $A \subset M$ we denote by
	$d_A\colon M \to \mathbb{R}$ the following function:
	\begin{equation}
		\label{eq:def-d_A}
		d_{A}(x) \coloneqq
		\begin{dcases}
			-\dist_g(x,\partial A), &		\mbox{if $x\in A$} , \\
			\phantom{-}\dist_g(x,\partial A), &	\mbox{if $x \notin A$}.
		\end{dcases}
	\end{equation}
	For every $\varepsilon > 0$,
	let us consider an increasing and Lipschitz continuous 
	function $\widetilde{q}_{\varepsilon}\colon \mathbb{R} \to [0,1]$
	such that 
	$\widetilde{q}_{\varepsilon}(t) = 0$ if $t < 0$,
	and $\widetilde{q}_{\varepsilon}(t) = 1$
	if $t \ge \eta_{\varepsilon} > 0$,
	where $\eta_{\varepsilon}\to 0$ as $\varepsilon\to 0$.
	We remind that such a function is linked with the
	energy functional $\mathcal{E}_{\varepsilon}$ since it is a solution of the 
	following differential equation:
	\begin{equation}
		\label{eq:ODE-tilde-q}
		\frac{\de}{\de t}
		\widetilde{q}_{\varepsilon}
		= \frac{1}{\varepsilon}
		\sqrt{\varepsilon^{3/2} + 2W(\widetilde{q}_{\varepsilon})}.
	\end{equation}
	Finally, for every $u_0 = \mathbf{1}_{\Omega}$,
	$u_{\varepsilon}$ is obtained as
	\begin{equation}
		\label{eq:modica-approximation-qtilde}
		u_{\varepsilon}(x) 
		= \widetilde{q}_{\varepsilon}(d_{M \setminus \Omega}(x)
		+ \delta_{\varepsilon,\Omega}),
	\end{equation}
	where $\delta_{\varepsilon,\Omega}\in [0,\eta_{\varepsilon}]$
	is a correction term such that
	\begin{equation}
		\label{eq:def-delta-epsilon-Omega}
		\int_{M}u_{\varepsilon}\, \de v_g
		= \int_{M}
		\widetilde{q}_{\varepsilon}(d_{M \setminus \Omega}(x)
		+ \delta_{\varepsilon,\Omega}) \de v_g
		= 
		\int_{M} u_0 \, \de v_g.
	\end{equation}
\end{remark}

\begin{theorem}[cf. Theorem 4.1 of \cite{fonseca-tartar}]
	\label{theorem:recovery-sequence}
	Assume that \ref{asu_nondegenerate} and \ref{asu_coercive} hold.
	If $(\varepsilon_k)_k \subset \mathbb{R}$
	is a sequence of positive numbers 
	such that $\varepsilon_k \to 0^+$
	and $(u_{\varepsilon_k})_k \subset H^1(M,\mathbb{R})$
	is a sequence of functions such that
	\begin{equation}
		\label{eq:uk-bounded}
		\mathcal{E}_{\varepsilon_k}(u_{\varepsilon_k}) \le c,
		\qquad \forall k \in \mathbb{N},
	\end{equation}
	for some constant $c > 0$,
	then there exists a subsequence,
	still denoted by $(\varepsilon_k)_{k}$,
	such that $(u_{\varepsilon_k})_k$
	converges to a function $u_0 \in L^1(M,\R)$
	with respect to the $L^1$--norm.
\end{theorem}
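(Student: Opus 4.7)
My plan is to follow the classical Modica--Mortola compactness strategy centered on the auxiliary function
\[
G(t) := \int_0^t \sqrt{2W(s)}\,\de s.
\]
Since $W\ge 0$ and $W^{-1}(0)=\{0,1\}$, $G$ is continuous and strictly increasing, hence a homeomorphism of $\R$ onto its image. Young's inequality gives the pointwise bound
\[
|\nabla G(u_{\varepsilon_k})| = \sqrt{2W(u_{\varepsilon_k})}\,|\nabla u_{\varepsilon_k}| \le \frac{W(u_{\varepsilon_k})}{\varepsilon_k} + \frac{\varepsilon_k|\nabla u_{\varepsilon_k}|^2}{2},
\]
which upon integration yields $\|\nabla G(u_{\varepsilon_k})\|_{L^1(M)} \le \CE_{\varepsilon_k}(u_{\varepsilon_k}) \le c$. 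In particular, $\int_M W(u_{\varepsilon_k})\,\de v_g \le c\,\varepsilon_k \to 0$.

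Next, I would exploit coercivity \ref{asu_coercive} to discard the unbounded tails of $u_{\varepsilon_k}$. Integrating $W'(t)t\ge\alpha t^2$ shows that $W(t)\ge (\alpha/4)t^2$ for $|t|\ge R_0$ with $R_0$ sufficiently large. Combined with the previous bound on $\int W(u_{\varepsilon_k})$, this gives
\[
\int_{\{|u_{\varepsilon_k}| \ge R_0\}} u_{\varepsilon_k}^2 \, \de v_g \le \frac{4}{\alpha}\int_M W(u_{\varepsilon_k})\,\de v_g \le \frac{4c\varepsilon_k}{\alpha} \xrightarrow{k\to\infty} 0.
\]
Denoting by $T_{R_0}\colon\R\to[-R_0,R_0]$ the pointwise truncation at height $R_0$, the sequence $v_k := T_{R_0}(u_{\varepsilon_k})$ satisfies $u_{\varepsilon_k}-v_k \to 0$ in $L^2(M)$ and a fortiori in $L^1(M)$, so it suffices to extract an $L^1$-convergent subsequence from $(v_k)_k$.

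For this I would apply $BV$ compactness to $G(v_k) = (G\circ T_{R_0})(u_{\varepsilon_k})$. The map $G\circ T_{R_0}$ is bounded and Lipschitz, with weak derivative $\sqrt{2W(t)}\,\mathbf{1}_{[-R_0,R_0]}(t)$, so $G(v_k)$ is uniformly bounded in $L^\infty(M)$ and
\[
\|\nabla G(v_k)\|_{L^1(M)}\le \|\nabla G(u_{\varepsilon_k})\|_{L^1(M)}\le c.
\]
Thus $(G(v_k))_k$ is bounded in $BV(M)$, and the compact embedding $BV(M)\hookrightarrow L^1(M)$ on the compact manifold $M$ yields a subsequence with $G(v_k)\to h_0$ in $L^1(M)$ and almost everywhere. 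Since $G|_{[-R_0,R_0]}$ is a homeomorphism onto a compact interval with uniformly continuous inverse, $v_k \to v_0 := G^{-1}(h_0)$ almost everywhere; dominated convergence, with the uniform bound $R_0$, upgrades this to convergence in $L^1(M)$. Combining with the negligible tail yields $u_{\varepsilon_k}\to u_0 := v_0$ in $L^1(M)$.

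The main technical subtlety lies in the truncation step: without the subcritical growth assumption \ref{asu_subcritical}, $W$ and $G$ could grow arbitrarily fast at infinity, and a direct $L^1$ bound on the untruncated $G(u_{\varepsilon_k})$ is not available. Coercivity compensates by making the set $\{|u_{\varepsilon_k}|\ge R_0\}$ asymptotically negligible in $L^2$, so that truncation introduces no loss in the limit, and $BV$ compactness can be applied to the bounded surrogate $G(v_k)$.
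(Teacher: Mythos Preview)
The paper does not supply its own proof of this statement: it is quoted directly from Fonseca--Tartar (Theorem~4.1 there), with the extension to the Riemannian setting attributed to \cite{benci-nardulli-osorio-piccione}. Your argument is precisely the classical Modica--Mortola/Fonseca--Tartar compactness proof---control of $\nabla G(u)$ in $L^1$ via Young's inequality, truncation to handle the tails using the coercivity \ref{asu_coercive}, and $BV$ compactness applied to $G$ of the truncated sequence---so it matches the cited source and is correct as written.
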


\begin{remark}
	\label{rem:convergence-valueE0}
	As a direct consequence of Theorem~\ref{theorem:recovery-sequence}
	and the lim-inf property of Theorem~\ref{theorem:G-convergence},
	if $\mathcal{E}_{\epsilon_k}(u_{\varepsilon_k}) \le c$
	for every $k \in \mathbb{N}$,
	then the function $u_0\in L^1(M,\R)$
	such that $u_{\varepsilon_k}\to u_0$
	up to subsequences is such that
	$\mathcal{E}_{0}(u_0) \le E^*$,
	there exists a measurable set $\Omega \subset M$
	such that $u_0 = \mathbf{1}_{\Omega}$
	and 
	\begin{equation*}
		\lim_{k \to \infty} \int_M u_k\, \de v_g
		= \int_M u_0\, \de v_g.
	\end{equation*}
\end{remark}

\subsection{Photography map}
\label{sect_photography}

In order to obtain the map $\mathcal{P}_{\varepsilon,m}$,
we combine the $\Gamma$-convergence result
given by Theorem~\ref{theorem:G-convergence} with
the characterization of the isoperimetric regions 
in a manifold with boundary given by Fall~\cite{fall10}.
Roughly, as long as $m$ and $\varepsilon$ are sufficiently small,
for every point $p \in \partial M$ we define 
$\mathcal{P}_{\varepsilon,m}(p)$
as the $\varepsilon$--approximation ensured by 
the lim-sup part of Theorem~\ref{theorem:G-convergence}
of the characteristic function of a small perturbation
of a half-ball centered at $p$ and with volume $m$,
denoted by $E_{p,m}\subset M$.
Such a set is defined by employing the normal coordinates at the point $p$,
and by using the implicit function theorem to construct the perturbation
that minimizes the perimeter.
The details of the construction of $E_{p,m}$ can be explored
in~\cite{fall10}.
However, for the purposes of this article,
we will provide a concise overview of the essential details.

Let $N_{\partial M}$ the unit interior normal vector field
along $\partial M$ and, 
for any $p \in \partial M$,
let $\mathrm{exp}^{\partial M}_{p}$ be the exponential map
of $\partial M$ at $p$.
Let us consider an orthonormal frame field
$(e_1,\dots,e_{n-1},N_{\partial M})$ of $M$ along $\partial M$.
We define the map $f^p\colon \mathbb{R}^{n-1} \to \partial M$
as follows:
\begin{equation}
	\label{eq:def-fp}
	f^p(z)\coloneqq \mathrm{exp}^{\partial M}_{p}(z^{i}e_{i}),
\end{equation}
where we used the Einstein summation convention.
For every $z \in \mathbb{R}^{n-1}$ and $t \in \mathbb{R}$,
with $t \ge 0$ and sufficiently small,
let us define also
\begin{equation}
	\label{eq:def-Fp}
	F^{p}(z,t) = \exp_{f^p(z)}(t N_{\partial M}) \in M,
\end{equation}
so that $F^p$ provides a local parametrization of a neighborhood 
of $p \in M$.
Our aim is to define a ``semi-sphere'' on $M$ with center in $p$.
To this aim, let us denote by $B^{n-1}$ the unit ball in $\mathbb{R}^{n-1}$
and by $S^{n-1}_+\in \mathbb{R}^n$ the upper hemisphere of
$S^{n-1}$, hence
\[
	S^{n-1}_+ \coloneqq
	\big\{z \in \mathbb{R}^{n}: \norm{z} = 1, z^n \ge 0\big\}.
\]
Let us denote by $\Theta\colon B^{n-1} \to S^{n-1}_+$
the inverse of the stereographic projection from the 
south pole $(0,\dots,0,-1) \in \mathbb{R}^n$,
formally seeing $B^{n-1}$
as $\{z \in \mathbb{R}^n: \norm{z}\le 1, z^n = 0\}$.		
Let $\widetilde{\Theta}\colon B^{n-1} \to B^{n-1}$
and $t\colon B^{n-1} \to [0,1]$ be such that
\[
	\Theta(z) = \big(\widetilde{\Theta}(z),t(z)\big).
\]
For every $r$ sufficiently small, the hypersurface
defined by 
\[
	\left\{
		F^p\big( r\widetilde{\Theta}(z), r t(z)\big):
		z \in B^{n-1}
	\right\} \subset M
\]
can be seen as an semi-sphere centered at $p$.
All the hypersurfaces nearby this semi-sphere centered at $p\in\partial M$
can be obtained by applying a small perturbation
$\omega\colon  S^{n-1}_+\to \mathbb{R}$,
and so we define the hypersurface 
$\Sigma_{p,r,\omega} \subset M$ as follows:
\begin{equation}
	\label{eq:def-Sigma-p-r-omega}
	\Sigma_{p,r,\omega}
	\coloneqq
	\left\{
		F^p\Big( \big(r + \omega(\Theta(z))\big)\, \widetilde{\Theta}(z),
		\big(r + \omega(\Theta(z))\big) t(z)\Big):
		z \in B^{n-1}
	\right\}.
\end{equation}
Since $t(z) = 0$ for every $z \in \partial B^{n-1}$,
$\partial \Sigma_{p,r,\omega} \subset \partial M$,
so that $\Sigma_{p,r,\omega}$ and $\partial M$
enclose a set that we denote by 
$E_{p,r,\omega}$.

By~\cite[Lemma 4.6] {fall10},
there exists $r_0 > 0$ 
such that for any $p \in \partial M$
and $r \in (0,r_0)$
there exists a unique smooth
$\omega^{p,r} \in C^{2,\alpha}(S^{n-1}_+)$
such that 
$\Sigma_{p,r,\omega^{p,r}}$
has a mean curvature function
that is, modulo an additive constant, 
an eigenfunction
of the first strictly positive eigenvalue
of the Laplacian on the hemisphere.
In this way,
$\Sigma_{p,r,\omega^{p,r}}$ represents an almost CMC
(constant mean curvature)
half-sphere centered at $p$,
thereby implying that it is the unique competitor
among all hypersurfaces $\Sigma_{p,r,\omega}$
to serve as a solution to the isoperimetric problem.
Moreover, $\norm{\omega^{p,r}}_{_C^{1,\alpha}(S^{n-1}_{+})}\to 0$
as $r \to 0$ and, since it is defined by using the 
Implicit Function Theorem, the map 
$(p,r)\mapsto \omega^{p,r}$ is continuous.
We denote as \emph{pseudo half-bubbles}
the hypersurfaces $\Sigma_{p,r,\omega^{p,r}}$,
in analogy with the \emph{pseudo-bubbles} introduced in~\cite{MR2529468}.

Following the notation in~\cite{fall10},
we will use the simpler notation $E_{p,r}$
to refer to $E_{p,r,\omega^{p,r}}$.
Furthermore, for any $p \in \partial M$
and sufficiently small $m$,
we denote by $r_{p,m} > 0$ the unique radius such that
$E_{p,r_{p,m}}$ has volume $m$,
i.e., $\int_{M}\mathbf{1}_{E_{p,r_{p,m}}} \de v_g = m$.
For more detailed information on the existence of such $r_{p,m}$,
please refer to~\cite[Lemma 4.7]{fall10}.

\begin{remark}
	\label{rem:def-m0}
	By the compactness assumption of $M$
	and $\partial M$,
	there exists $m_0> 0$
	such that for every $p \in \partial M$
	and $m \in (0,m_0)$ we have that 
	$r_{p,m}$ is well defined and less than $r_0$.
	As a consequence, $\omega^{p,r_{p,m}}$
	is always well defined by~\cite[Lemma 4.6]{fall10}
	for every $p \in \partial M$ and $m \in (0,m_0)$.
\end{remark}
For the sake of notation,
for every $p \in \partial M$ and $m \in (0,m_0)$
we set
\[
	E_{p,m} \coloneqq E_{p,r_{p,m}}
	\quad\text{and}\quad
	\Sigma_{p,m}\coloneqq
	\Sigma_{p,r_{p,m},\omega^{p,r_{p,m}}} = 
	\partial E_{p,m} \cap M.
\]
Let us notice that, since $m < m_0$,
the hypersurface $\Sigma_{p,m}$ is smooth.
Finally, we will denote by 
$u^{m}_{0,p} \in L^1(M,\mathbb{R})$ the characteristic function 
of $E_{p,m}$,
hence
\[
	u^{m}_{0,p}(x) \coloneqq
	\begin{dcases}
		1, &		\mbox{if } x \in E_{p,m}, \\
		0, &		\mbox{otherwise}.
	\end{dcases}
\]

Before defining the photography map,
let us state~\cite[Lemma 4.8]{fall10} in accordance with our notation.
\begin{lemma}
	\label{lemma:fall10-418}
	For every $p\in \partial M$,
	let us denote by $H_{\partial M}(p)$
	the mean curvature of $\partial M$ at $p$.
	Then, there exists a dimensional constant
	$\gamma_n > 0$ such that 
	\begin{equation}
		\label{eq:fall-local-convergence}
		\mathcal{E}_{0}(u^m_{0,p})
		= \sigma
		\left(
			I_{\mathbb{R}^n_+}(m)
			- \gamma_n H_{\partial M}(p) m
			+ \mathcal{O}(m^\frac{n+1}{n})
		\right),
		\qquad \forall m \in (0,m_0),
	\end{equation}
	and, moreover, the following holds:
	\begin{equation}
		\label{eq:fall-IM-convergence}
		I_M(m)
		= 
		I_{\mathbb{R}^n_+}(m)
		- \gamma_n  \max_{p \in \partial M}\{H_{\partial M}(p)\} m
		+ \mathcal{O}(m^\frac{n+1}{n}).
	\end{equation}
\end{lemma}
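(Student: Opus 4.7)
The plan is to derive both expansions from a local Fermi-coordinate computation around $p \in \partial M$, using the explicit construction of the pseudo half-bubbles $E_{p,r}$ recalled in Section~\ref{sect_photography}. I would work in the coordinates $(z,t) \in \mathbb{R}^{n-1}\times [0,+\infty)$ induced by the parametrization $F^p$ of~\eqref{eq:def-Fp}, in which the metric admits the Fermi expansion
\begin{equation*}
    g = \de t^{2} + g_{\partial M}(z,t), \qquad \sqrt{\det g}(z,t) = 1 - (n-1)\,H_{\partial M}(p)\,t + O\big(\lvert z \rvert^{2} + t^{2}\big),
\end{equation*}
the $t$-linear coefficient being (up to the standard normalization) the mean curvature of $\partial M$ at $p$, arising from the trace of the second fundamental form.

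The first identity is then obtained by parametrizing $E_{p,r} = E_{p,r,\omega^{p,r}}$ via~\eqref{eq:def-Sigma-p-r-omega} as a perturbation of the flat half-ball of radius $r$ in $\mathbb{R}^{n}_{+}$, where $\lVert \omega^{p,r} \rVert_{C^{1,\alpha}(S^{n-1}_{+})} = o(r)$ by Fall's estimates. Expanding the Riemannian volume form and the induced area form in $r$ yields
\begin{equation*}
    \mathrm{vol}_{g}(E_{p,r}) = V_{0}\, r^{n} + \alpha_{n}\, H_{\partial M}(p)\, r^{n+1} + O\big(r^{n+2}\big),
\end{equation*}
\begin{equation*}
    \mathcal{H}^{n-1}\big(\Sigma_{p,r} \cap \mathrm{int}(M)\big) = A_{0}\, r^{n-1} + \beta_{n}\, H_{\partial M}(p)\, r^{n} + O\big(r^{n+1}\big),
\end{equation*}
for positive dimensional constants $V_{0}, A_{0}, \alpha_{n}, \beta_{n}$, with $V_{0}$ the volume of the flat unit half-ball and $A_{0}$ the area of the upper unit hemisphere. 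Crucially, the half-bubble shape is critical for the area functional with fixed volume among all hypersurfaces $\Sigma_{p,r,\omega}$, so any first-order contribution coming from $\omega^{p,r}$ is absorbed and the displayed error terms are sharp. Inverting the first expansion gives $r_{p,m} = (m/V_{0})^{1/n} + O(m^{2/n})$, and substituting into the area expansion the two first-order corrections combine into a single contribution linear in $m$ with coefficient $-\gamma_{n} H_{\partial M}(p)$ for an explicit positive dimensional constant $\gamma_{n}$. Since $\mathcal{E}_{0}(u^{m}_{0,p}) = \sigma\, \mathcal{H}^{n-1}(\Sigma_{p,m} \cap \mathrm{int}(M))$ by definition, the first identity follows.

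For the second identity, the upper bound $I_{M}(m) \le I_{\mathbb{R}^{n}_{+}}(m) - \gamma_{n}\, \max_{q\in\partial M}\! H_{\partial M}(q)\, m + O(m^{(n+1)/n})$ is immediate by selecting $p \in \partial M$ attaining $\max_{q} H_{\partial M}(q)$ and using $E_{p,m}$ as an admissible competitor in~\eqref{IP_regular}. For the matching lower bound, one must show that any isoperimetric region $\Omega_{m}$ of volume $m$ is, up to a perturbation of perimeter $o(m)$, of the form $E_{p^{*},m}$ with $p^{*}$ a maximum of $H_{\partial M}$. This combines two ingredients: first, the leading-order asymptotic~\eqref{eq:IM-to-IR+} of Bayle--Rosales together with Fall's classification of small-volume minimizers as nearly-round half-bubbles anchored at the boundary; second, a standard perturbation argument showing that if the center were not a maximizer of $H_{\partial M}$, translating the half-bubble to a true maximizer would strictly decrease the relative perimeter to first order in $m$, contradicting the minimality of $\Omega_{m}$.

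The main obstacle is the quantitative localization in the lower bound: one must show not merely that $\Omega_{m}$ is $L^{1}$-close to a half-ball (as would follow from $\Gamma$-convergence alone), but that it agrees with a pseudo half-bubble $E_{p^{*},m}$ up to a perimeter error of order $m^{(n+1)/n}$, which is precisely the sharp order appearing in the expansion. This finer rigidity, together with the uniqueness of $\omega^{p,r}$ obtained by applying the Implicit Function Theorem to the CMC equation on the hemisphere, is exactly the content of~\cite[Lemma 4.6--4.8]{fall10}; beyond invoking Fall's construction, no essentially new analytical input is required.
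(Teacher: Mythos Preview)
Your sketch is essentially correct and follows the standard route (Fermi-coordinate expansion of volume and area for the pseudo half-bubbles, inversion $r \mapsto m$, then upper bound by competitor and lower bound by rigidity of small-volume minimizers). Note, however, that the paper does not actually prove this lemma: it is introduced with the sentence ``let us state~\cite[Lemma 4.8]{fall10} in accordance with our notation'' and no proof is given. So there is nothing to compare against in the paper itself; your proposal is an outline of the argument in the cited reference, and you correctly identify that the only nontrivial ingredient---the $O(m^{(n+1)/n})$-sharp identification of small isoperimetric regions with pseudo half-bubbles centered at a point of maximal mean curvature---is precisely the content of Fall's Lemmas~4.6--4.8.
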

%\begin{proof}
%	Cf.~\cite[Lemma 4.8]{fall10}.
%\end{proof}

\begin{remark}
	For our specific purposes,
	it is possible to avoid the use of the perturbation $\omega^{p,r}$
	by setting $\omega \equiv 0$
	in the definitions of
	$\Sigma_{p,r,\omega}$, $E_{p,r}$, and $u^m_{0,p}$.
	This choice is viable
	because the primary tool we will employ in the subsequent construction
	is the estimate~\eqref{eq:fall-local-convergence},
	which remains valid even when $\omega \equiv 0$.
	However, in order to avoid a detailed proof of this last assertion,
	as it falls outside the scope of our work,
	we rely on the estimate provided by~\cite{fall10}.
\end{remark}

\begin{definition}
	\label{def:photography}
	For every $m \in (0,m_0)$
	and $\varepsilon > 0$,
	$\mathcal{P}_{\varepsilon,m}\colon \partial M \to \mathcal{H}_{m}$
	is the map that links
	$p \in \partial M$
	to $u^m_{\varepsilon,p}$,
	that is the $\varepsilon$--approximation of 
	$u^m_{0,p} \in L^1(M,\mathbb{R})$
	given by the \emph{lim-sup} property of Theorem~\ref{theorem:G-convergence}
	(see Remark~\ref{rem:epsilon-approximation}).
\end{definition}
\begin{figure}
    \centering
    \begin{picture}(400,200)
        \put(0,0){\includegraphics[width=\textwidth]{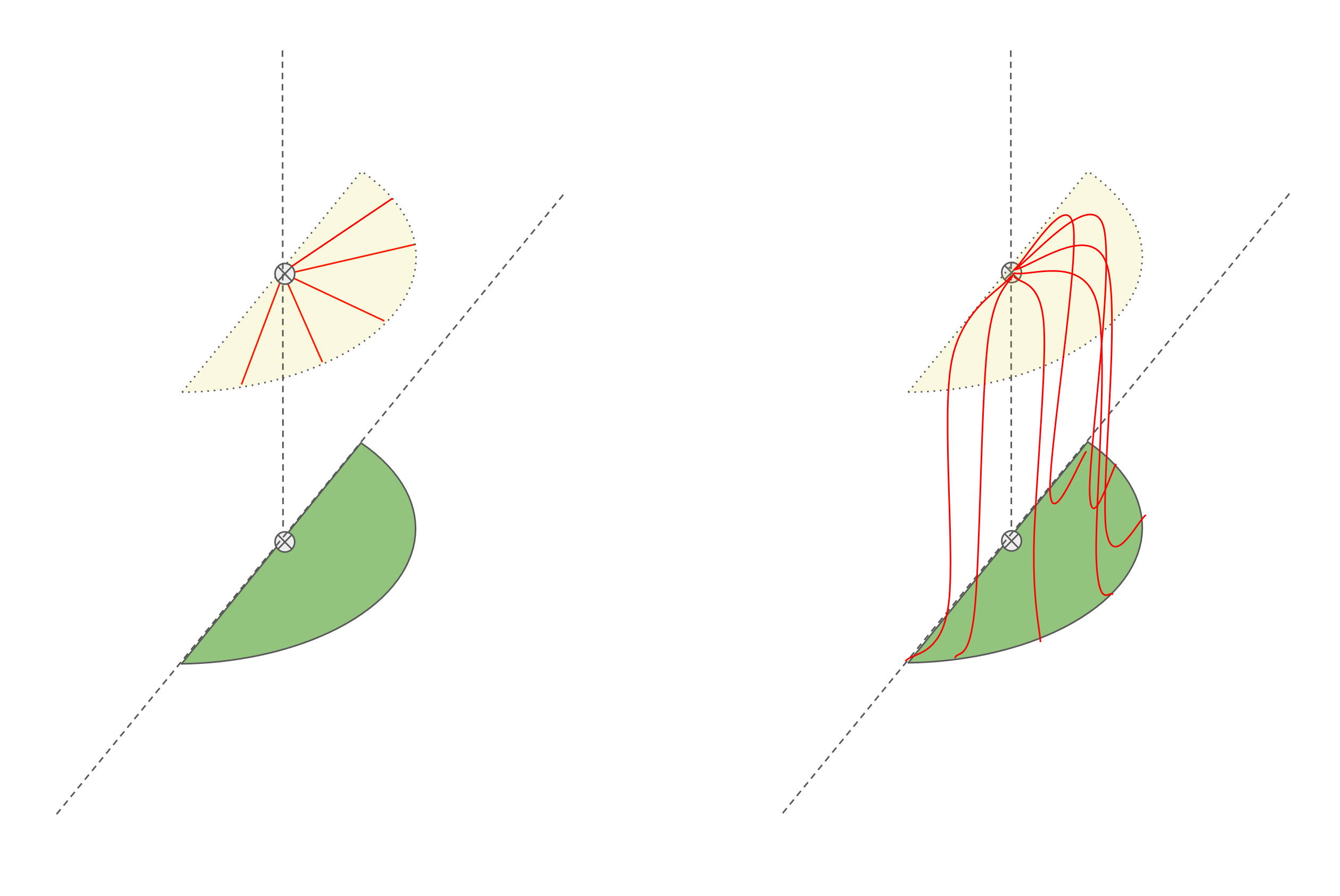}}
        \put(165,190){$\partial M$}
        \put(393,190){$\partial M$}
        \put(135,130){$M$}
        \put(363,130){$M$}
        \put(95,100){$p$}
        \put(323,100){$p$}
        \put(55,90){$0$}
        \put(283,90){$0$}
        \put(55,175){$1$}
        \put(283,175){$1$}
        \put(130,220){$u_{0,p}^m$}
        \put(360,220){$u_{\varepsilon,p}^m$}
    \end{picture}
    \caption{Depiction of the map $\mathcal{P}_{\varepsilon,m}$ introduced in Definition \ref{def:photography}. On the left, we see $u_{0,p}^m$, the indicator function of the set $E_{p,m}$, which is the set enclosed by the semi-sphere $\Sigma_{p,m}$ and $\partial M$. The map $\mathcal{P}_{\varepsilon,m}$ returns $u^m_{\eps,m}$, depicted on the right, which is the smooth approximation of $u_{0,p}^m$ given by the $\Gamma$-convergence result (Theorem \ref{theorem:G-convergence}).}
    \label{FIG_photography_boundary}
\end{figure}

See Figure \ref{FIG_photography_boundary} for a picture aimed at clarifying Definition \ref{def:photography}. The main result of this section is the following,
which provides an estimate on the smallest sublevel of
$\mathcal{E}_{\varepsilon,m}$
that contains the image of the photography map.
\begin{proposition}
	\label{prop:photography-sublevel}
	Assume that \ref{asu_nondegenerate} and \ref{asu_coercive} hold. There exists a constant $\theta = \theta(M,g,W) > 0 $
	such that 
	there exists $m_1 = m_1(M,g,W,\theta) \in ]0,m_0[$
	such that for every $m \in ]0,m_1[$ 
	there exists $\varepsilon_1 = \varepsilon_1(M,g,W,\theta,m) > 0$
	such that for every $\varepsilon \in ]0,\varepsilon_1[$
	we have
	\begin{equation}
		\label{eq:photography-sublevel}
		\mathcal{E}_{\varepsilon,m}(\mathcal{P}_{\varepsilon,m}(p))
		\le \sigma I_M(m) + \theta m,
		\qquad \forall p \in \partial M.
	\end{equation}
	In other words, the sublevel
	$\mathcal{E}_{\varepsilon,m}^{\sigma I_M(m) + \theta m}$
	contains the whole image of the photography map $\mathcal{P}_{\epsilon,m}$.
\end{proposition}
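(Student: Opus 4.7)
The proposal is to combine Lemma~\ref{lemma:fall10-418} (which compares $\mathcal{E}_0(u^m_{0,p})$ with $\sigma I_M(m)$) with the $\Gamma$-limsup construction of Remark~\ref{rem:epsilon-approximation} (which compares $\mathcal{E}_\varepsilon(u^m_{\varepsilon,p})$ with $\mathcal{E}_0(u^m_{0,p})$). The strategy is to first fix $\theta$ so that the first comparison holds uniformly in $p$ for $m$ small, and then show that the $\varepsilon$-approximation error can be made uniformly small in $p \in \partial M$ for $\varepsilon$ small.

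For the first step, by Lemma~\ref{lemma:fall10-418}, for every $p \in \partial M$ and every $m \in (0,m_0)$ one has
\begin{equation*}
\mathcal{E}_0(u^m_{0,p}) - \sigma I_M(m)
= \sigma \gamma_n \bigl(\max_{q \in \partial M} H_{\partial M}(q) - H_{\partial M}(p)\bigr)\, m
+ \mathcal{O}(m^{(n+1)/n}),
\end{equation*}
where the $\mathcal{O}$-term is uniform in $p$ by the compactness of $\partial M$. Setting
\begin{equation*}
\theta_0 \coloneqq \sigma \gamma_n \bigl(\max_{\partial M} H_{\partial M} - \min_{\partial M} H_{\partial M}\bigr),
\end{equation*}
I would then choose $\theta \coloneqq \theta_0 + 1$ (any strict upper bound works). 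Since $m^{(n+1)/n} = m \cdot m^{1/n}$, there exists $m_1 \in (0, m_0)$ such that for every $m \in (0, m_1)$ and every $p \in \partial M$,
\begin{equation*}
\mathcal{E}_0(u^m_{0,p}) \le \sigma I_M(m) + \tfrac{1}{2}\theta\, m.
\end{equation*}

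For the second step, I would use the explicit formula \eqref{eq:modica-approximation-qtilde} for $u^m_{\varepsilon,p}$ in terms of $\widetilde{q}_\varepsilon$ applied to the signed distance to $\Sigma_{p,m}$ (plus the volume-correction shift $\delta_{\varepsilon,E_{p,m}}$). Since $m < m_1 < m_0$, the hypersurface $\Sigma_{p,m}$ is smooth, and the family $\{E_{p,m}\}_{p\in\partial M}$ depends continuously (in fact $C^{2,\alpha}$) on $p$ through the Implicit Function Theorem construction recalled after \eqref{eq:def-Sigma-p-r-omega}. By compactness of $\partial M$, there are uniform bounds on the geometry of $\Sigma_{p,m}$ (curvature, reach, volume of tubular neighborhoods) independent of $p$; consequently the explicit one-dimensional computation leading to the $\Gamma$-limsup estimate in Remark~\ref{rem:epsilon-approximation} produces an error $|\mathcal{E}_\varepsilon(u^m_{\varepsilon,p}) - \mathcal{E}_0(u^m_{0,p})|$ that is bounded by a quantity $\rho(\varepsilon, m)$ depending only on $\varepsilon$ and $m$ (not on $p$) and tending to $0$ as $\varepsilon \to 0^+$ with $m$ fixed. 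Choosing $\varepsilon_1 = \varepsilon_1(M,g,W,\theta,m) > 0$ so that $\rho(\varepsilon,m) \le \tfrac{1}{2}\theta\, m$ for $\varepsilon \in (0,\varepsilon_1)$, and combining with the previous step, yields \eqref{eq:photography-sublevel}. The mass constraint $\int_M u^m_{\varepsilon,p}\, \de v_g = m$ is built into the construction via the shift $\delta_{\varepsilon,E_{p,m}}$ (cf.~\eqref{eq:def-delta-epsilon-Omega}), so $u^m_{\varepsilon,p} \in \mathcal{H}_m$.

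The main obstacle I expect is precisely the uniformity in $p$ of the $\Gamma$-limsup estimate: the statement of Theorem~\ref{theorem:G-convergence}\,(ii) as given is a pointwise statement for a single target $u_0$, so some care is needed in revisiting the construction of Remark~\ref{rem:epsilon-approximation} to track that all the geometric quantities entering the estimate (reach of $\Sigma_{p,m}$, $L^\infty$-bound on the mean curvature, Lipschitz constant of $d_{M\setminus E_{p,m}}$ on a tubular neighborhood, and the volume-correction term $\delta_{\varepsilon,E_{p,m}}$) are uniformly controlled when $p$ ranges over the compact set $\partial M$. Once this uniformity is in hand, the rest is just putting the three ingredients together and choosing the thresholds $m_1$ and $\varepsilon_1$ appropriately.
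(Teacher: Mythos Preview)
Your proposal is correct and follows essentially the same approach as the paper: combine the two expansions in Lemma~\ref{lemma:fall10-418} to bound $\mathcal{E}_0(u^m_{0,p}) - \sigma I_M(m)$ by a quantity of order $m$ uniformly in $p$, choose $\theta$ accordingly, absorb the $\mathcal{O}(m^{(n+1)/n})$ remainder by taking $m_1$ small, and finally invoke the $\Gamma$-limsup together with compactness of $\partial M$ to pass to the $\varepsilon$-approximation uniformly in $p$. The only cosmetic differences are that the paper takes $\theta = \sigma\gamma_n(\max H_{\partial M} - \min H_{\partial M} + 1)$ and argues via a strict inequality plus compactness rather than your explicit $\tfrac{1}{2}\theta m$ budget split, but the substance is identical.
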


\begin{proof}
	Combining~\eqref{eq:fall-local-convergence}
	and~\eqref{eq:fall-IM-convergence},
	we get 
	\begin{equation}
		\label{eq:photo-sublevel-proof1}
		\mathcal{E}_0(u^m_{0,p})
		= 
		\sigma
		\left(
			I_M(m)
			+ \gamma_n\big(\max_{p \in \partial M}\{H_{\partial M}(p)\}
				- H_{\partial M}(p)
		\big)m\right)
		+ \mathcal{O}(m^{\frac{n+1}{n}}).
	\end{equation}
	Hence, setting $\theta = \theta(M,g) >0$ as
	\begin{equation}
		\label{eq:def-omega}
		\theta = 
		\sigma
		\gamma_n
		\left( \max_{p \in \partial M}\{H_{\partial M}(p)\}
			- \min_{p \in \partial M}\{H_{\partial M}(p)\}
		+ 1 \right),
	\end{equation}
	we have
	\begin{equation}
		\label{eq:photo-sublevel-proof2}
		\mathcal{E}_0(u^{m}_{0,p})
		< \sigma I_{M}(m) + \theta m + \mathcal{O}(m^{\frac{n+1}{n}}),
		\qquad \forall p \in \partial M.
	\end{equation}
	As a consequence, there exists $m_1 = 
	m_1(M,g,\theta) \in (0,m_0)$
	such that for every $m \in (0,m_1)$
	we have
	\begin{equation}
		\label{eq:photo-sublevel-proof3}
		\mathcal{E}_0(u^{m}_{0,p})
		< \sigma I_{M}(m) + \theta m,
		\qquad \forall p \in \partial M.
	\end{equation}
	By Theorem~\ref{theorem:G-convergence}
	and Remark~\ref{rem:epsilon-approximation},
	we also know that 
	\begin{equation}
		\label{eq:photo-sublevel-proof4}
		\limsup_{\varepsilon \to 0^+}
		\mathcal{E}_{\varepsilon}(u^{m}_{\varepsilon,p})
		= 
		\limsup_{\varepsilon \to 0^+}
		\mathcal{E}_{\varepsilon}(\mathcal{P}_{\varepsilon,m}(p))
		\le 
		\mathcal{E}_0(u^{m}_{0,p}),
		\qquad \forall p \in \partial M.
	\end{equation}
	By the strict inequality in~\eqref{eq:photo-sublevel-proof3}
	and since $M$ is a compact manifold,
	by~\eqref{eq:photo-sublevel-proof4}
	we infer the existence of 
	$\varepsilon_1 = \varepsilon_1(M,g,W,\theta,m) > 0$
	such that~\eqref{eq:photography-sublevel} holds
	for every $\varepsilon \in (0,\varepsilon_1)$.
\end{proof}

\begin{remark}
	By~\eqref{eq:def-cn} and~\eqref{eq:IM-to-IR+},
	namely that $I_M(m) \approx m^{\frac{n-1}{n}}$
	as $m$ goes to zero,
	we have 
	\begin{equation}
		\label{eq:omega-m-oIm}
		\lim_{m \to 0^+} \frac{m}{I_M(m)} = 0,
	\end{equation}
	hence the ``width'' of the sublevel 
	that contains the photography goes rapidly to $0$
	as $m \to 0^+$.
\end{remark}

In order to use $\mathcal{P}_{\varepsilon,m}$
as a photography map
in the sense of Theorem~\ref{theorem:photography},
it remains to prove its continuity.
\begin{proposition}
	\label{prop:photography-continuity}
	Assume that \ref{asu_nondegenerate} and \ref{asu_coercive} hold. Let $m_1 > 0$ and $\varepsilon_1 > 0$
	be defined as in Proposition~\ref{prop:photography-sublevel}.
	For every $m \in (0,m_1)$
	and $\varepsilon \in (0,\varepsilon_1)$,
	the map
	$\mathcal{P}_{\varepsilon,m}\colon \partial M \to \mathcal{H}_m$
	is continuous.
\end{proposition}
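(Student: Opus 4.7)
The plan is to unwind the construction of $\mathcal{P}_{\varepsilon,m}(p) = u^m_{\varepsilon,p}$ and verify continuity in $p$ at each stage. Recall from Remark~\ref{rem:epsilon-approximation} that
\[
u^m_{\varepsilon,p}(x) = \widetilde{q}_\varepsilon\bigl(d_{M \setminus E_{p,m}}(x) + \delta_{\varepsilon,E_{p,m}}\bigr),
\]
so the continuity of $p \mapsto u^m_{\varepsilon,p}$ into $H^1(M,\R)$ reduces to the continuity of (i) the family of regions $E_{p,m}$, (ii) the associated signed distance function, and (iii) the mass-correction parameter $\delta_{\varepsilon,E_{p,m}}$, combined with the Lipschitz and piecewise-$C^1$ regularity of $\widetilde{q}_\varepsilon$.

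For step (i), I would retrace Fall's construction of $E_{p,m}$. An orthonormal frame along $\partial M$ can be chosen to depend continuously on $p$, and the maps $f^p$ and $F^p$ of~\eqref{eq:def-fp}--\eqref{eq:def-Fp} then depend continuously on $p$ because the exponential map is smooth in the base point. The existence statements quoted before Remark~\ref{rem:def-m0}, both obtained in~\cite{fall10} via the Implicit Function Theorem, further yield that $(p,r) \mapsto \omega^{p,r} \in C^{2,\alpha}(S^{n-1}_+)$ is continuous and that $p \mapsto r_{p,m}$ is continuous for each fixed $m \in (0,m_0)$. Composing these, the pseudo half-bubbles $\Sigma_{p,m}$ and the enclosed regions $E_{p,m}$ depend continuously on $p$ in a strong sense: as embedded hypersurfaces-with-boundary of $M$, the $\Sigma_{p,m}$ vary continuously in the $C^{2,\alpha}$ topology, and $\mathbf{1}_{E_{p,m}} \to \mathbf{1}_{E_{p_0,m}}$ in $L^1(M,\R)$ as $p \to p_0$.

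For step (ii), the $C^{2,\alpha}$ continuity of $\Sigma_{p,m}$ implies the existence of a uniform tubular neighborhood $U$ of $\Sigma_{p_0,m}$ in which, for $p$ close to $p_0$, the signed distance functions $d_{M \setminus E_{p,m}}$ are smooth and converge in $C^1(U)$ as $p \to p_0$; outside $U$ they are bounded away from $0$ uniformly in $p$. For step (iii), the defining identity~\eqref{eq:def-delta-epsilon-Omega} is monotone and continuous in $\delta$ and, by step (ii), continuous in $p$, so a standard monotone/implicit argument yields continuity of $p \mapsto \delta_{\varepsilon,E_{p,m}}$. Assembling: because $\widetilde{q}_\varepsilon$ is Lipschitz with derivative supported in $[0,\eta_\varepsilon]$ and of class $C^1$ there (by the ODE~\eqref{eq:ODE-tilde-q}), the $L^2$ continuity of $u^m_{\varepsilon,p}$ follows from uniform convergence of its arguments, and the $L^2$ continuity of $\nabla u^m_{\varepsilon,p}$ from the $C^1$ convergence of $d_{M \setminus E_{p,m}}$ on $U$, the only region where the factor $\widetilde{q}_\varepsilon'$ is nonzero.

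The main technical point to watch is the gradient piece $\nabla u^m_{\varepsilon,p} = \widetilde{q}_\varepsilon'(\cdot)\,\nabla d_{M \setminus E_{p,m}}$, which is supported in a thin shell around $\Sigma_{p,m}$ whose location moves with $p$. On the symmetric difference between the shells of $p$ and $p_0$ the integrand is pointwise bounded by a constant depending only on $\varepsilon$; by the $C^{2,\alpha}$ continuity of $\Sigma_{p,m}$ obtained in step (i), the measure of this symmetric difference tends to $0$ as $p \to p_0$, so its contribution to the $H^1$-difference vanishes. On the overlap of the two shells the $C^1$ convergence of the signed distance functions gives direct control of the difference of the gradients. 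Combining the two estimates yields $u^m_{\varepsilon,p} \to u^m_{\varepsilon,p_0}$ in $H^1(M,\R)$, which is exactly the continuity claimed. Note that $\varepsilon$ is fixed throughout and no uniformity in $\varepsilon$ is needed, which is why this mildly singular dependence is not an obstacle.
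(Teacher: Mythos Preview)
Your proposal is correct and follows essentially the same approach as the paper: both unwind the formula $u^m_{\varepsilon,p}=\widetilde{q}_\varepsilon(d_{M\setminus E_{p,m}}+\delta_{\varepsilon,E_{p,m}})$, use the implicit-function continuity of $(p,r)\mapsto\omega^{p,r}$ and $p\mapsto r_{p,m}$ from~\cite{fall10} to get continuity of the regions and signed distances, and handle $\delta_{\varepsilon,E_{p,m}}$ via an implicit-function argument on~\eqref{eq:def-delta-epsilon-Omega}. The only cosmetic difference is that the paper packages the final step as a single Lipschitz-type inequality $\lVert\mathcal{P}_{\varepsilon,m}(p_1)-\mathcal{P}_{\varepsilon,m}(p_2)\rVert_{H^1}\le C\bigl[\lVert d_{M\setminus E_{p_1,m}}-d_{M\setminus E_{p_2,m}}\rVert_{H^1}+|\delta_{\varepsilon,E_{p_1,m}}-\delta_{\varepsilon,E_{p_2,m}}|\bigr]$ (deferring details to~\cite[Proposition~4.14]{benci-nardulli-osorio-piccione}), whereas you argue the gradient piece directly by splitting into the overlap of the transition shells and their symmetric difference.
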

\begin{proof}
	The proof follows the same line of the one of
	\cite[Proposition 4.14]{benci-nardulli-osorio-piccione},
	and we report here the main steps.

	We recall that for every $p \in \partial M$ and $\varepsilon > 0$
	the function $\mathcal{P}_{\varepsilon,m}(p)$
	is defined by using~\eqref{eq:modica-approximation-qtilde},
	hence we have
	\[
		\big(\mathcal{P}_{\varepsilon,m}(p)\big)(x)
		= 
		\widetilde{q}_{\varepsilon}\big(d_{M \setminus E_{p,m}}(x) +
		\delta_{\varepsilon,E_{p,m}}\big),
	\]
	where $d_{M\setminus E_{p,m}}$ is defined by using~\eqref{eq:def-d_A}
	and
	$\widetilde{q}_{\varepsilon}$ is the Lipschitz continuous 
	function that solves~\eqref{eq:ODE-tilde-q}.
	As a consequence, there exists a constant $C > 0$,
	which depends on $M,g,\varepsilon$ and $W$,
	such that the following inequality holds:
	\begin{multline*}
		\norm*{\mathcal{P}_{\varepsilon,m}(p_1)
		- \mathcal{P}_{\varepsilon,m}(p_2)}_{H^1(M,\mathbb{R})}\\
		\le C\Big[
			\norm{d_{M\setminus E_{p_1,m}}
			- d_{M\setminus E_{p_2,m}}}_{H^1(M,\mathbb{R})}
			+ |\delta_{\varepsilon,E_{p_1,m}} - \delta_{\varepsilon,E_{p_2,m}}|
		\Big].
	\end{multline*}
	The interested reader can find more details on this last inequality
	in~\cite[Proposition 4.14]{benci-nardulli-osorio-piccione}.
	Let us observe that, as long as $m < m_0$,
	the hypersurface $\Sigma_{p,m} = \partial E_{p,m} \cap M$
	is smooth and diffeomorphic to a hemisphere.
	The proof ends by observing that,
	as $p_1 \to p_2$, we have
	\[
		\norm{d_{M\setminus E_{p_1,m}}
		- d_{M\setminus E_{p_2,m}}}_{H^1(M,\mathbb{R})} \to 0
		\quad\text{and}\quad
		|\delta_{\varepsilon,E_{p_1,m}} - \delta_{\varepsilon,E_{p_2,m}}| \to 0,
	\]
	the last convergence obtained as a consequence of 
	the Implicit Function Theorem applied 
	to~\eqref{eq:def-delta-epsilon-Omega},
	where $E_{p,m}$ substitutes $\Omega$,
	to define the $C^1$ map 
	$p \mapsto \delta_{\varepsilon,E_{p,m}}$.
\end{proof}

\subsection{Barycenter map}

The map
$\mathcal{B}\colon \mathcal{E}_{\varepsilon,m}^c \to \partial M$
is obtained by combining the $\Gamma$-convergence result
with the information of the isoperimetric problem given by
Proposition~\ref{prop:almost-isoperimetric-in-a-small-ball},
which ensures that the ``almost'' minimizers of $\mathcal{E}_0$
have almost all their mass inside a small ball centered on a point
of the boundary $\partial M$.
While the map $\mathcal{P}_{\varepsilon,m}$ has been constructed
by using the {lim-sup} part of the $\Gamma$-convergence result,
namely Theorem~\ref{theorem:G-convergence},
to define $\mathcal{B}$ we rely on the
{lim-inf} statement 
and on the compactness result ensured by
Theorem~\ref{theorem:recovery-sequence}.

Let us begin by introducing some new notation.
Let $\mathrm{dist}_g\colon M\times M \to \mathbb{R}$ be the distance
function induced by $g$, hence
\begin{equation}
	\label{eq:def-dist_g}
	\mathrm{dist}_g(x,y) \coloneqq
	\inf\left\{
		\left(\int_{0}^1 g(\dot{\gamma},\dot{\gamma})\de s\right)^{1/2}:
		\gamma \in H^1([0,1],M), 
		\gamma(0) = x, \gamma(1) = y
	\right\}.
\end{equation}
With this notation, we set 
$\mathrm{diam}_g(M) = \max\{ \mathrm{dist}_g(x,y): x,y \in M\}$,
and for any $p\in M$ and $r \in (0, \mathrm{diam}_g(M))$, let 
\begin{equation}
	\label{eq:def-Bxr}
	B(p,r) \coloneqq
	\left\{
		x \in M: 
		\mathrm{dist}_g(p,x) \le r
	\right\}.
\end{equation}
Without loss of generality, 
from now on we assume 
that $M$ is isometrically embedded in $\R^{\tilde{n}}$,
for some $\tilde{n} \geq n$.
This will allow us to construct the map $\mathcal{B}$
as the composition of the ``extrinsic'' barycenter map
and the nearest point projection on $\partial M$.
More formally, let $(\partial M)_r \subset \mathbb{R}^{\tilde{n}}$
be the following set
\begin{equation}
	\label{eq:def-partialM_r}
	(\partial M)_r
	\coloneqq
	\left\{
		z \in \mathbb{R}^{\tilde{n}}:
		\dist_{\mathbb{R}^{\tilde{n}}}(z,\partial M) \le r
	\right\},
\end{equation}
and let us denote by 
$\pi_{\partial M}\colon (\partial M)_r \to \partial M$
the nearest point projection onto $\partial M$.
Notice that $\pi_{\partial M}$
is well defined when $r$ is sufficiently small,
thanks to the compactness assumption on $\partial M$.
%More formally, we give the following definition
%\begin{definition}
%	\label{def:r1}
%	There exists $r_1 > 0$ such that 
%	for every $r \in (0,r_1)$ the map 
%	$\pi_{\partial M}\colon (\partial M)_{r} \to \partial M$
%	is well defined and $\partial M$ is a 
%	deformation retract of $(\partial M)_{r}$.
%\end{definition}
We will combine $\pi_{\partial M}$ with the following map
\begin{equation}
	\label{eq:def-beta*}
	\beta^*\colon
	u \in L^1(M,\R) \to
	\frac{ \int_M x\lvert u(x) \rvert \de v_g}{\int_M \lvert u(x) \rvert \de v_g}
	\in \R^{\tilde{n}},
\end{equation}
which gives the center of mass of a function in $\mathbb{R}^{\tilde{n}}$.
%hence it is not the ``intrinsic'' barycenter in $M$.
\begin{lemma}
	\label{lem:beta*-continuity}
	The map $\beta^*\colon L^1(M,\mathbb{R}) \to \mathbb{R}^{\tilde{n}}$
	is continuous.
\end{lemma}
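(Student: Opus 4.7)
The plan is to exploit the compactness of $M$ and the fact that $\beta^{\star}(u)$ is a ratio of two continuous $L^1$-functionals, the denominator being positive on the domain of interest.

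First I would observe that, since $M$ is compact and isometrically embedded in $\mathbb{R}^{\tilde{n}}$, the position vector is bounded: there exists $C = \max_{x \in M} |x|_{\mathbb{R}^{\tilde n}} < +\infty$. Implicitly $\beta^{\star}$ is defined only where $\int_M |u|\,\de v_g > 0$ (i.e.\ $u \neq 0$ in $L^1$), and it is on this open subset of $L^1(M,\R)$ that continuity is to be established.

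Next I would define the auxiliary maps
\[
N\colon u \in L^1(M,\R) \longmapsto \int_M x\,|u(x)|\,\de v_g \in \R^{\tilde n},
\qquad
D\colon u \in L^1(M,\R) \longmapsto \int_M |u(x)|\,\de v_g \in \R,
\]
and verify their Lipschitz continuity on all of $L^1(M,\R)$. For the denominator, the reverse triangle inequality gives $|D(u_1)-D(u_2)| \le \|u_1-u_2\|_{L^1}$. For the numerator, using $||u_1|-|u_2|| \le |u_1-u_2|$ componentwise, one obtains
\[
|N(u_1)-N(u_2)|_{\R^{\tilde n}} \le \int_M |x|_{\R^{\tilde n}}\,\bigl||u_1|-|u_2|\bigr|\,\de v_g \le C\,\|u_1-u_2\|_{L^1}.
\]

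Finally, writing $\beta^{\star}(u) = N(u)/D(u)$, continuity follows from the algebra of continuous maps wherever $D(u) \neq 0$: if $u_k \to u$ in $L^1$ with $D(u)>0$, then $D(u_k) \to D(u)>0$ and $N(u_k) \to N(u)$, hence $\beta^{\star}(u_k) \to \beta^{\star}(u)$ in $\R^{\tilde n}$. There is no real obstacle here; the only subtlety worth flagging is that the statement implicitly requires $u$ to have nonzero $L^1$-norm, which is always the case for elements of $\mathcal{H}_m$ with $m \neq 0$, the setting in which $\beta^{\star}$ will be applied when composed with $\pi_{\partial M}$ to build the barycenter map $\mathcal{B}$.
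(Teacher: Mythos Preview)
Your proof is correct and follows essentially the same approach as the paper: both exploit the boundedness of $|x|$ on the compact embedded manifold and the Lipschitz continuity of $u\mapsto\int_M|u|$ to control the quotient. Your presentation, separating numerator and denominator as two Lipschitz maps and invoking the algebra of continuous functions, is in fact cleaner than the paper's version, which combines everything into a single estimate and then appeals (somewhat unnecessarily) to dominated convergence and H\"older's inequality.
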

\begin{proof}
	For every $u_1,u_2 \in L^1(M,R)$,
	setting $v_1 = \int_{M}|u_1|\de v_g$
	and $v_2 = \int_{M}|u_2|\de v_g$,
	we have
	\begin{multline*}
		\norm*{
			\beta^*(u_1) - \beta^*(u_2)
		}_{\mathbb{R}^{\tilde{n}}}
		=
		\norm*{
			\frac{ \int_M x|u_1(x)| \de v_g}{v_1}
			-
			\frac{ \int_M x|u_2(x)| \de v_g}{v_2}
		}_{\mathbb{R}^{\tilde{n}}}\\
		\le 
		\frac{2\norm{x}_{\infty}}{v_1}
		\int_M \left| |u_1| - \frac{v_1}{v_2}|u_2|\right|\de v_g,
	\end{multline*}
	where
	$\norm{x}_{\infty} = \max\{\norm{x}_{\mathbb{R}^{\tilde{n}}}: x \in M\}$.
	By using Lebesgue's dominated convergence
	and H\"older's inequality, we get
	\[
		\frac{2\norm{x}_{\infty}}{v_1}
		\int_M \left| |u_1| - \frac{v_1}{v_2}|u_2|\right|\de v_g
		\to 0
		\quad\text{as}\quad \norm{u_1 - u_2}_{L^1(M,\mathbb{R})} \to 0,
	\]
	so we obtain that 
	\[
		\norm*{
			\beta^*(u_1) - \beta^*(u_2)
		}_{\mathbb{R}^{\tilde{n}}} \to 0,
		\quad\text{as}\quad \norm{u_1 - u_2}_{L^1(M,\mathbb{R})} \to 0,
	\]
	which concludes the proof.
\end{proof}

In order to define
$\mathcal{B}\colon \mathcal{E}_{\varepsilon,m}^{\sigma I_M(m) + \theta m}
\to \partial M$,
one needs to show that
for every $r > 0$ there exist
$m$ and $\varepsilon$ sufficiently small
such that
\begin{equation}
	\label{property_barycenter}
	\beta^*(\mathcal{E}_{\varepsilon,m}^{\sigma I_M(m)+\theta m})
	\subset (\partial M)_r.
\end{equation}
In other words, we need to prove that if $m$ and $\varepsilon$
are sufficiently small, then the barycenter of every function in the 
sublevel $\mathcal{E}_{\varepsilon,m}^{\sigma I_M(m) + \theta m}$
lies in a small tubular neighbourhood of $\partial M$.
Then, choosing $r$ sufficiently small as well,
one can define
$ \mathcal{B}\colon \mathcal{E}^{\sigma I_M(m)+\theta m}_{\eps,m} \to \partial M$
as follows:
\begin{equation}
	\label{eq:def-Barycenter}
	\mathcal{B} = \pi_{\partial M}\circ \beta^*.
\end{equation}
To this end, we need to ensure
that if the volume $m$ is sufficiently small
then any function
in the sublevel $\mathcal{E}_{\varepsilon,m}^{\sigma I_M(m) + \theta m}$
has almost all its mass in a small ball centered 
on a point of the boundary $\partial M$
(see Proposition~\ref{prop:almost-isoperimetric-in-a-small-ball}).
The proof of this result is based
on~\cite[Theorem 1.2]{antonelli2022},
which gives a compactness result
for sequences of sets with uniformly bounded volume and perimeter.
For the sake of presentation, we restate here that theorem.
If $(X,\de,\mathcal{H}^{n})$ is a metric measure space
(where $\mathcal{H}^n$ denots the $n$--dimensional Hausdorff measure)
and $\Omega\subset X$ is a Borel set,
we remind that the definition of perimeter 
of $\Omega$ used in the 
following theorem is 
\[
	P(\Omega,X)\coloneqq
	\inf
	\left\{
		\liminf_{i}
		\int_X
		\mathrm{lip} f_i\de\mathcal{H}^{n}
		:
		f_i \in \mathrm{Lip}_{\mathrm{loc}}(X),
		f_i \to \mathbf{1}_{\Omega}
		\text{ in }L^1_{\text{loc}}(X,\mathcal{H}^n)
	\right\},
\]
where $\mathrm{lip}f(x) \coloneqq \limsup_{y \to x}\frac{|f(y)-f(x)|}{\de(x,y)}$.
Moreover,
we recall that a metric space 
$(X,\de,\mathcal{H}^n)$
is an $\mathrm{RCD}(\kappa,n)$ space
if, loosely speaking,
it has synthetic notions of Ricci curvature bounded below by $\kappa \in \mathbb{R}$
and dimension bounded above by $n \in (0, +\infty]$
(see~\cite{antonelli2022} for more details).

\begin{theorem}[Theorem 1.2 of~\cite{antonelli2022}]
	\label{theorem:antonelli2022}
	Let $\kappa\in \mathbb{R}$ and $n \ge 2$.
	Let $(X_k,\de_k,\mathcal{H}^{n}_k)$
	be a sequence of $\mathrm{RCD}(\kappa,n)$ spaces
	and let $\Omega_k \subset X_k$
	be bounded sets of finite perimeter such that
	$\sup_{k} \big(P(\Omega_k,X_k) + \mathcal{H}^{n}_k(\Omega_k)\big)<+\infty$.
	Assume there exists $v_0 > 0$ such that 
	$\mathcal{H}^n_k(B(x,1))\ge v_0$ for every $x \in X_k$
	and for every $k\in\mathbb{N}$.
	Then, up to subsequences, there exists a nondecreasing
	sequence $(N_k)_{k \in \mathbb{N}}\subset \mathbb{N}$,
	$N_k \ge 1$, points
	$p_{k,i} \in X_k$, with $1 \le i \le N_k$
	and pairwise disjoint subsets
	$\Omega_{k,i}\subset \Omega_{k}$
	such that:
	\begin{enumerate}[i)]
		\item $\lim_{k \to \infty} \de_k(p_{k,i},p_{k,j}) = +\infty$
			for any $i \ne j \le \bar{N}$,
			where $\overline{N} \coloneqq \lim_{k \to \infty}
			N_k \in \mathbb{N} \cup \{+\infty\}$;
		\item For every $1 \le i < \overline{N} + 1$,
			the sequence 
			$(X_k,\de_k,\mathcal{H}^{n},p_{k,i})$
			converges in the pointed measured
			Gromov-Hausdorff topology
			to a pointed $\mathrm{RCD}(\kappa,n)$ space
			$(Y_{i},\dist_{Y_i},\mathcal{H}^n,p_i)$
			as $k \to \infty$;
		\item There exists $F_i \subset Y_i$ such that 
			$\Omega_{k,i}\to F_i$ as $k \to \infty$ in $L^1$--strong topology
			and the following relations hold:
			\begin{equation*}
				\lim_{k\to \infty}
				\mathcal{H}^n_k(\Omega_k) = 
				\sum_{i = 1}^{\overline{N}}\mathcal{H}^{n}(F_i),
			\end{equation*}
			and
			\begin{equation*}
				\sum_{i = 1}^{\overline{N}}
				P(F_i,Y_i)
				\le  \liminf_{k \to \infty} P(\Omega_k,X_k).
			\end{equation*}
	\end{enumerate}
	Moreover, if $\Omega_k$ is an isoperimetric set in $X_k$
	for any $k$, 
	then $F_i$ is an isoperimetric set in $Y_i$
	for any $i<\overline{N} + 1$ and
	\begin{equation}
		\label{eq:lastResult-ANP}
		P(F_i,Y_i) = \lim_{k\to \infty}P(\Omega_{k,i},X_k)
	\end{equation}
	for any $i<\overline{N}+1$.
\end{theorem}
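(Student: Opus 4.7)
The plan is to run a concentration-compactness argument on the sequence $(\Omega_k)_{k}$, exploiting the uniform bound $\sup_k\bigl(P(\Omega_k,X_k)+\mathcal{H}^n_k(\Omega_k)\bigr)<+\infty$ together with the non-collapsing condition $\mathcal{H}^n_k(B(x,1))\ge v_0$. The non-collapsing hypothesis is crucial because, combined with the $\mathrm{RCD}(\kappa,n)$ assumption, it forces precompactness of the sequence of pointed metric measure spaces $(X_k,\de_k,\mathcal{H}^n_k,p_k)$ in the pointed measured Gromov-Hausdorff topology, for any choice of base points. This is the standard stability/compactness theory for $\mathrm{RCD}(\kappa,n)$ spaces, and it is the tool that replaces translations in the Euclidean concentration-compactness method.

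The main construction goes by induction. For the first step, I would pick $p_{k,1}\in X_k$ maximizing (up to a small error) the localized volume $\mathcal{H}^n_k(\Omega_k\cap B(x,1))$. If this maximum tends to $0$ along a subsequence, then a covering-and-telescoping argument combined with the non-collapsing bound forces $\mathcal{H}^n_k(\Omega_k)\to 0$, and one sets $\overline{N}=0$ (a trivial case). Otherwise, on the pointed sequence $(X_k,\de_k,\mathcal{H}^n_k,p_{k,1})$ one extracts a pmGH limit $(Y_1,\dist_{Y_1},\mathcal{H}^n,p_1)$, and by the BV-compactness theory for sets of finite perimeter on pmGH-converging $\mathrm{RCD}$ spaces (this is the ``moving sets'' framework of Ambrosio-Honda-Tewodrose and their followers) one obtains a set $F_1\subset Y_1$ with $\Omega_k\cap B(p_{k,1},R)\to F_1$ in $L^1$-strong topology locally, and $P(F_1,Y_1)\le\liminf_k P(\Omega_k\cap B(p_{k,1},R),X_k)$. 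Setting $\Omega_{k,1}$ to be (essentially) $\Omega_k\cap B(p_{k,1},R_k)$ for some $R_k\to+\infty$, one then iterates the procedure on the residual set $\Omega_k\setminus\Omega_{k,1}$ to produce $p_{k,2}$, $\Omega_{k,2}$, $F_2$, and so on. The separation condition $\lim_k\de_k(p_{k,i},p_{k,j})=+\infty$ is enforced by choosing $R_k$ large and $p_{k,i+1}$ outside $B(p_{k,i},2R_k)$, while the non-collapsing lower bound on unit ball volume guarantees that at each surviving step one extracts a nontrivial chunk of mass $\ge v_0/2$, so the procedure either terminates ($\overline{N}<\infty$) or produces an infinite sequence of disjoint chunks.

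The volume identity $\lim_k\mathcal{H}^n_k(\Omega_k)=\sum_{i=1}^{\overline{N}}\mathcal{H}^n(F_i)$ follows from showing that the residual mass $\mathcal{H}^n_k(\Omega_k\setminus\bigcup_{i\le N}\Omega_{k,i})$ goes to $0$ as $N\to\overline{N}$ and $k\to\infty$. This is where the main obstacle lies: one must argue that no mass escapes into a ``thin'' region where the unit-ball volume concentration tends to $0$. The non-collapsing hypothesis together with a relative isoperimetric (or Poincaré) inequality on $\mathrm{RCD}(\kappa,n)$ spaces provides a lower bound on the perimeter of any subset with non-trivial mass in a unit ball, so the uniform bound $\sup_k P(\Omega_k,X_k)<+\infty$ limits how much mass can be ``left over'' after finitely many concentration steps. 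The perimeter subadditivity $\sum_i P(F_i,Y_i)\le\liminf_k P(\Omega_k,X_k)$ then follows by applying local lower semicontinuity on each piece (using disjointness of the localization regions) and summing.

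Finally, for the last part, if each $\Omega_k$ is isoperimetric in $X_k$, I would argue by comparison: any competitor perturbation of a single limit component $F_i$ in $Y_i$ can, thanks to pmGH convergence of smooth enough objects and a diagonal slicing/gluing argument, be realized as a competitor perturbation of $\Omega_{k,i}$ in $X_k$ for large $k$; optimality of $\Omega_k$ then forces optimality of $F_i$. To obtain the sharpened equality $P(F_i,Y_i)=\lim_k P(\Omega_{k,i},X_k)$, one uses isoperimetric monotonicity together with the already established inequality $\sum_i P(F_i,Y_i)\le\liminf_k P(\Omega_k,X_k)$ and the fact that isoperimetric profiles are stable under pmGH convergence in the non-collapsed $\mathrm{RCD}$ setting. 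I expect the technically hardest point to be the ``no loss of mass'' step that underlies the volume equality, since it requires combining the non-collapsing lower bound, the perimeter bound and a careful choice of the separation scales $R_k$.
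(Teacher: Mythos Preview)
The paper does not prove this theorem at all: it is quoted verbatim as Theorem~1.2 of \cite{antonelli2022} and used as a black box in the proofs of Proposition~\ref{prop:almost-isoperimetric-in-a-small-ball}, Lemma~\ref{lem:lineIM-to-IR} and Proposition~\ref{prop:almost-isoperimetric-in-a-small-ball-Dirichlet}. There is therefore no ``paper's own proof'' to compare your proposal against.

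That said, your sketch is a reasonable outline of the concentration-compactness argument that underlies results of this type in the $\mathrm{RCD}$ literature: iterated extraction of mass-maximizing base points, pmGH precompactness from non-collapsing plus the curvature-dimension bound, local $L^1$/BV compactness of sets of finite perimeter along pmGH-converging sequences, and lower semicontinuity of perimeter. You correctly identify the delicate point as the ``no loss of mass'' step. If your goal is to supply a proof for this paper, the appropriate action is simply to cite \cite{antonelli2022}; if your goal is to reconstruct the argument, you should consult that reference directly, as the technical apparatus (in particular the $L^1$-strong convergence of sets along varying $\mathrm{RCD}$ spaces and the gluing of competitors for the isoperimetric rigidity part) requires more than what you have written to be made rigorous.
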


\begin{remark}
	\label{rem:how-use-Antonelli}
	In the following, we use the previous theorem 
	on the more simple case of Riemannian manifolds.
	We want to highlight that when 
	the $\mathrm{RCD}$ space is $(\mathrm{int}(M),g,\mathcal{H}^n)$,
	the perimeter of a measurable set $\Omega\subset \mathrm{int}(M)$
	coincides with the 
	$(n-1)$--dimensional Hausdorff measure of its relative boundary,
	hence 
	\[
		P(\Omega,\mathrm{int}(M)) = \mathcal{H}^{n-1}(\partial^*\Omega\cap\mathrm{int}(M)).
	\]
	When we consider the space
	$(M,g,\mathcal{H}^n)$, 
	the whole perimeter of the set $\Omega\subset M$
	is considered, 
	%	hence even its intersection with $\partial M$,
	namely
	\[
		P(\Omega,M) = \mathcal{H}^{n-1}(\partial^*\Omega).
	\]
\end{remark}

\begin{proposition}
	\label{prop:almost-isoperimetric-in-a-small-ball}
	There exists $\mu = \mu(M,g)> 0$ such that the following property holds.
	For every almost isoperimetric sequence
	$(u_k)_{k\in \mathbb{N}}\subset  L^1(M,\mathbb{R})$
	with volumes $m_k = \int_{M} u_{k}\de v_g \to0$,
	i.e., 
	\begin{equation}
		\label{eq:almost-isoperimetric-in-a-small-ball-HP}
		\lim_{k \to \infty}
		\frac{\mathcal{E}_{0}(u_k)}{\sigma I_M(m_k)} = 1,
	\end{equation}
	there exists a sequence $(p_k)_{k \in \mathbb{N}} \subset \partial M$
	such that
	\begin{equation}
		\label{eq:almost-isoperimetric-in-a-small-ball}
		\lim_{k \to +\infty}
		\frac{1}{m_k}
		\left(
			\int_{M\setminus B(p_k,\mu m_{k}^{1/n})} u_{k} \de v_g
		\right)
		= 0.
	\end{equation}
\end{proposition}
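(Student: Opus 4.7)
The plan is to rescale so that the almost-isoperimetric sets acquire unit volume, apply the concentration-compactness Theorem~\ref{theorem:antonelli2022}, and then use strict subadditivity of the Euclidean isoperimetric profile to force the rescaled limit to be a single half-ball concentrated near $\partial M$; unscaling then yields concentration in a ball of radius $\mu m_k^{1/n}$ around some boundary point $p_k$. Since $\mathcal{E}_0(u_k) < +\infty$ for large $k$, each $u_k = \mathbf{1}_{\Omega_k}$ with $\Omega_k \in \mathcal{C}_g(M)$ of volume $m_k$ and relative perimeter $\mathcal{H}^{n-1}(\partial^*\Omega_k \cap \mathrm{int}(M)) = (1+o(1)) I_M(m_k)$. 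I would rescale the metric as $g_k := m_k^{-2/n} g$ so that $\mathrm{vol}_{g_k}(\Omega_k) = 1$, and by~\eqref{eq:IM-to-IR+} together with~\eqref{eq:def-cn} the relative perimeter of $\Omega_k$ in $(M, g_k)$ converges to $c_n^+$. The rescaled spaces $(\mathrm{int}(M), g_k, \mathcal{H}^n_{g_k})$ are $\mathrm{RCD}(\kappa_k, n)$ with $\kappa_k \to 0$ (the Ricci lower bound rescales by $m_k^{2/n}$), and Bishop--Gromov supplies the uniform lower bound on unit-ball measures needed by Theorem~\ref{theorem:antonelli2022}.

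That theorem then produces, up to subsequence, pointed limit spaces $(Y_i, \mathrm{dist}_{Y_i}, \mathcal{H}^n, p_i)$, base points $p_{k,i}$, and disjoint pieces $\Omega_{k,i} \subset \Omega_k$ whose $L^1$-limits $F_i \subset Y_i$ satisfy $\sum_{i=1}^{\overline{N}} \mathcal{H}^n(F_i) = 1$ (no mass escapes, since volumes are constant) and $\sum_{i=1}^{\overline{N}} P(F_i, Y_i) \le c_n^+$. Smooth convergence of the rescaled metrics around each base point identifies $Y_i$ as $\mathbb{R}^n$ (when $\mathrm{dist}_{g_k}(p_{k,i}, \partial M) \to +\infty$) or as the half-space $\mathbb{R}^n_+$ (when this distance stays bounded); the Euclidean relative isoperimetric inequality then yields $P(F_i, Y_i) \ge c_{Y_i} v_i^{(n-1)/n}$ with $v_i := \mathcal{H}^n(F_i)$ and $c_{\mathbb{R}^n_+} = c_n^+ < c_{\mathbb{R}^n}$.

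The key quantitative step combines these bounds with the strict subadditivity $\sum_i v_i^{(n-1)/n} \ge \bigl(\sum_i v_i\bigr)^{(n-1)/n} = 1$ (strict unless exactly one $v_i$ is nonzero), yielding
\[
c_n^+ \;\ge\; \sum_{i=1}^{\overline{N}} P(F_i, Y_i) \;\ge\; c_n^+ \sum_{i=1}^{\overline{N}} v_i^{(n-1)/n} \;\ge\; c_n^+,
\]
which forces equality throughout: exactly one limit piece is nonzero, $v_1 = 1$, $Y_1 = \mathbb{R}^n_+$, and $F_1$ is a half-ball of universal Euclidean radius $R$. Since $F_1$ has bounded diameter, the $L^1$-strong convergence of $\Omega_{k,1}$ to $F_1$ and the vanishing of the other pieces give that the $g_k$-mass of $\Omega_k$ outside the $g_k$-ball of radius $2R$ around $p_{k,1}$ tends to $0$. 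The identification $Y_1 = \mathbb{R}^n_+$ keeps $p_{k,1}$ at bounded $g_k$-distance from $\partial M$, so its nearest-point projection $p_k \in \partial M$ lies within bounded $g_k$-distance of $p_{k,1}$, and unscaling by $m_k^{1/n}$ delivers a universal $\mu = \mu(M,g) > 0$ for which~\eqref{eq:almost-isoperimetric-in-a-small-ball} holds.

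The hardest step will be the rigorous verification that the rescaled manifolds-with-boundary fit the $\mathrm{RCD}$ framework required by Theorem~\ref{theorem:antonelli2022}, and that the pointed Gromov--Hausdorff limits are indeed $\mathbb{R}^n$ or $\mathbb{R}^n_+$ with the boundary of $M$ correctly converging to $\partial \mathbb{R}^n_+$ in the boundary case. Once this geometric convergence is secured, the rigidity in the Euclidean isoperimetric inequality is essentially forced, and the passage back to the original scale is routine.
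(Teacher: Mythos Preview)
Your proposal is correct and follows essentially the same approach as the paper: rescale to unit volume, apply Theorem~\ref{theorem:antonelli2022} to the rescaled sequence, identify the pointed limits as $\mathbb{R}^n$ or $\mathbb{R}^n_+$, and use the Euclidean isoperimetric inequality together with strict subadditivity of $v\mapsto v^{(n-1)/n}$ (plus $I_{\mathbb{R}^n_+}(1)<I_{\mathbb{R}^n}(1)$) to force $\overline{N}=1$ with limit space $\mathbb{R}^n_+$, whence concentration near $\partial M$. Your write-up is in fact more explicit than the paper's on two points---the subadditivity computation and the passage from the base point $p_{k,1}$ to an actual boundary point $p_k$ via nearest-point projection---and your scaling exponent $m_k^{-2/n}$ on the metric tensor is the correct one (the paper's $m_k^{-1/n}$ appears to be a typo, or else a convention in which the scalar multiplies distances rather than the tensor); the paper sidesteps your stated concern about the RCD framework for manifolds with boundary by working on $\mathrm{int}(M)$ rather than $M$.
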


\begin{proof}[Proof of Proposition~\ref{prop:almost-isoperimetric-in-a-small-ball}]

	For every fixed $k \in \mathbb{N}$,
	let $\Omega_{k} \in \mathcal{C}_g(M)$ be such that
	$u_k = \textbf{1}_{\Omega_{k}}$.
	Note that, given that $u_k \in L^1(M, \mathbb{R})$,
	we can select any $\Omega_k$ in such a way that $\Omega_k$
	is a subset of the interior of $M$.
	Let us define the following sequence of manifolds
	of bounded geometry:
	\[
		(X_k,g_k)
		\coloneqq 
		(\mathrm{int}(M),m_{k}^{-1/n}g).
	\]
	In other words, we rescale the metric on $M$ by a factor 
	that depends on $m_k$ in such a way that 
	\[
		\mathcal{H}_{k}^{n}(\Omega_k) = 
		\int_{X_k} u_k \de v_{g_k} = 1,
		\qquad \forall k \in \mathbb{N},
	\]
	where $\mathcal{H}_{k}^{n}$ denotes the 
	$n$--dimensional Hausdorff measure on $(X_k,g_k)$.
	Moreover, by~\eqref{eq:IM-to-IR+}
	and~\eqref{eq:almost-isoperimetric-in-a-small-ball-HP}
	and recalling that
	$\mathcal{E}_0(u_{k})
	= \sigma \mathcal{H}^{n-1}(\partial^*\Omega_{k}\cap \mathrm{int}(M))$,
	we obtain the existence of a constant $C$ such that
	\[
		P(\Omega_k,X_k)
		= \mathcal{H}_{k}^{n-1}(\partial^*\Omega_k\cap\mathrm{int}(M))
		<  I_{\mathbb{R}^n_{+}}(1) + C,
		\qquad \forall k \in \mathbb{N}.
	\]
	Moreover, we have also the following equality:
	\begin{equation}
		\label{eq:almost-isoperimetric-in-a-small-ball-proof1}
		\lim_{k \to \infty}
		\mathcal{H}_{k}^{n-1}(\partial^*\Omega_k\cap\mathrm{int}(M))
		= I_{\mathbb{R}^n_+}(1).
	\end{equation}
	This means that the sequence $\{(X_k,g_k,\mathcal{H}^n_k)\}_{k \in \mathbb{N}}$
	and $\Omega_k \subset M_k$ satisfy the hypotheses of
	Theorem~\ref{theorem:antonelli2022}
	and so,
	up to subsequences, for every $k$ there exist
	$N_k \in \mathbb{N}$, with $N_k \ge 1$,
	and points $p_{k,i} \in M_k$ such that 
	for every $i \in 1,\dots,N_k$
	the space $(X_{k},\dist_{g_k},\mathcal{H}^{n}_{k},p_{k,i})$
	converges in the Gromov-Hausdorff topology
	to a pointed $\mathrm{RCD}(\kappa,n)$
	as $k\to \infty$,
	that we denote by
	$(M^i_{\infty},\de^i_{\infty},\mathcal{H}^{n,i}_{\infty},p^i_{\infty})$.
	This space could be either 
	the Euclidean semi-space
	$(\mathbb{R}^{n}_+,\de,\mathcal{H}^{n},0)$
	or the Euclidean space 
	$(\mathbb{R}^{n},\de,\mathcal{H}^{n},0)$,
	depending on if $(p_{k,i})_{k}$ converges
	to a point in $\partial M$ or not, respectively.
	Moreover, setting $\overline{N} = \lim_{k\to \infty} N_k$,
	for every $i = 1,\dots,\overline{N}$ 
	there exists $\Omega_{\infty}^i\subset M^{i}_{\infty}$
	such that 
	\begin{equation}
		\label{eq:almost-isoperimetric-in-a-small-ball-proof2}
		\sum_{i = 1}^{\overline{N}}\mathcal{H}^{n,i}_{\infty}(\Omega_{\infty}^i)
		= \lim_{k \to \infty}\mathcal{H}^{n}_k(\Omega_k)
		= 1
	\end{equation}
	and, using also~\eqref{eq:almost-isoperimetric-in-a-small-ball-proof1},
	we have
	\begin{equation}
		\label{eq:almost-isoperimetric-in-a-small-ball-proof3}
		\sum_{i = 1}^{\overline{N}}
		\mathcal{H}^{n-1,i}_{\infty}(\partial^*\Omega_{\infty}^i \cap \mathrm{int}(M^{i}_{\infty})
		) \le
		\liminf_{k \to \infty} 
		\mathcal{H}_{k}^{n-1}(\partial^*\Omega_{k}\cap\mathrm{int}(M))
		= 
		I_{\mathbb{R}^n_+}(1).
	\end{equation}
	By using contradiction arguments,
	this last inequality implies both that
	$\overline{N} = 1$ and that 
	$(M^1_{\infty},\de^1_{\infty},\mathcal{H}^n_{\infty},p^1_{\infty})$
	is actually the Euclidean semispace
	$(\mathbb{R}^{n}_+,\de,\mathcal{H}^{n},0)$,
	since $I_{\mathbb{R}^n_+}(1) < I_{\mathbb{R}^n}(1)$.
	As a consequence,
	by using~\eqref{eq:almost-isoperimetric-in-a-small-ball-proof2},
	we get the existence of $\mu > 0$
	and a sequence $p_k \in \partial M$
	such that~\eqref{eq:almost-isoperimetric-in-a-small-ball}
	holds.
\end{proof}

Using the same constants $\theta$ and $\mu$ 
given by Proposition~\ref{prop:photography-sublevel} 
and Proposition~\ref{prop:almost-isoperimetric-in-a-small-ball},
respectively,
we obtain the following result,
which ensures that all the functions 
in the sublevel that contains the image of the photography 
map have their mass concentrated near the boundary. 

\begin{lemma}
	\label{lem:in-a-small-semisphere}
	Assume that \ref{asu_nondegenerate} and \ref{asu_coercive} hold. For every $\alpha \in (0,1)$,
	there exists $m_{\alpha} = m_{\alpha}(M,g,W,\theta,\alpha) > 0$
	such that for every $m \in (0,m_{\alpha})$
	there exists $\varepsilon_{\alpha} = \varepsilon_{\alpha}(M,g,W,\theta,\alpha,m) > 0$
	such that for every $\varepsilon \in (0,\varepsilon_{\alpha})$
	and for any 
	$u \in \mathcal{E}_{\varepsilon,m}^{\sigma I_M(m) + \theta m}$
	there exists a point $p_u \in \partial M$
	such that
	\begin{equation}
		\label{eq:mass-in-a-small-semisphere}
		\int_{M\setminus B(p_u,\mu m^{1/n})} |u| \de v_g
		\le \alpha m.
	\end{equation}
\end{lemma}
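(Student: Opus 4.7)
I would argue by contradiction. Negating the conclusion furnishes $\alpha_0 \in (0,1)$ together with sequences $m_k \to 0^+$, $\varepsilon_k \to 0^+$ and $u_k \in \mathcal{H}_{m_k}$ satisfying $\mathcal{E}_{\varepsilon_k}(u_k) \le \sigma I_M(m_k) + \theta m_k$ yet
\begin{equation*}
\int_{M \setminus B(p,\mu m_k^{1/n})} |u_k|\,dv_g > \alpha_0 m_k
\qquad \text{for every } p \in \partial M.
\end{equation*}
Because $m/I_M(m) \to 0$ as $m \to 0^+$ by~\eqref{eq:IM-to-IR+} and~\eqref{eq:def-cn}, the energy bound reads $\mathcal{E}_{\varepsilon_k}(u_k) \le \sigma I_M(m_k)(1+o(1))$. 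Moreover, since the negation allows $\varepsilon_k$ to be chosen arbitrarily small relative to $m_k$, I may arrange via a diagonal extraction that $\varepsilon_k = o(m_k^{1/n})$ (and in fact as fast as subsequent estimates require).

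The plan is to reduce to Proposition~\ref{prop:almost-isoperimetric-in-a-small-ball} by extracting from each $u_k$ a Caccioppoli set $\Omega_k$ whose indicator is close to $u_k$ in $L^1$ and which forms an almost-isoperimetric sequence. Using the coercivity assumption~\ref{asu_coercive}, a standard truncation reduces the problem to $u_k \in [0,1]$, with changes in $L^1$ norm and energy of order $o(m_k)$. Then Modica's inequality combined with the coarea formula gives
\begin{equation*}
\mathcal{E}_{\varepsilon_k}(u_k) \ge \int_M \sqrt{2W(u_k)}\,|\nabla u_k|\,dv_g = \int_0^1 \sqrt{2W(s)}\,\mathcal{H}^{n-1}\bigl(\{u_k=s\} \cap \mathrm{int}(M)\bigr)\,ds,
\end{equation*}
while the relative isoperimetric inequality yields $\mathcal{H}^{n-1}(\{u_k=s\} \cap \mathrm{int}(M)) \ge I_M(v_k(s))$ with $v_k(s) \coloneqq \mathrm{vol}(\{u_k > s\})$. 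Together with the layer-cake identity $\int_0^1 v_k(s)\,ds = m_k$ and the concavity of $v \mapsto v^{(n-1)/n}$ (which is the leading asymptotic profile of $I_M$), a pigeonhole-with-Jensen argument selects $s_k \in (0,1)$ bounded away from the wells such that $\Omega_k \coloneqq \{u_k > s_k\}$ satisfies both $\mathrm{vol}(\Omega_k) = m_k(1+o(1))$ and $\sigma\mathcal{H}^{n-1}(\partial^*\Omega_k \cap \mathrm{int}(M)) = \sigma I_M(\mathrm{vol}(\Omega_k))(1+o(1))$.

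Applying Proposition~\ref{prop:almost-isoperimetric-in-a-small-ball} to $(\mathbf{1}_{\Omega_k})_k$ produces $p_k \in \partial M$ with $\int_{M \setminus B(p_k,\mu \mathrm{vol}(\Omega_k)^{1/n})}\mathbf{1}_{\Omega_k}\,dv_g = o(m_k)$. To transfer this concentration to $u_k$ I would control the transition layer $T_k \coloneqq \{u_k \in (\eta_k,1-\eta_k)\}$ with $\eta_k \to 0$ chosen slowly: using \ref{asu_nondegenerate} we have $\min_{[\eta_k,1-\eta_k]}W \ge c\eta_k^2$, and from the energy bound
\begin{equation*}
\mathrm{vol}(T_k) \le \frac{\varepsilon_k \mathcal{E}_{\varepsilon_k}(u_k)}{\min_{[\eta_k,1-\eta_k]}W} = O\!\left(\frac{\varepsilon_k m_k^{(n-1)/n}}{\eta_k^2}\right),
\end{equation*}
while outside $T_k$ one has $|u_k - \mathbf{1}_{\Omega_k}| \le \eta_k$. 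Choosing $\eta_k \to 0$ with $\eta_k = o(1)$ and $\varepsilon_k$ fast enough so that both error contributions are $o(m_k)$, I obtain $\|u_k - \mathbf{1}_{\Omega_k}\|_{L^1(M)} = o(m_k)$, and since $\mathrm{vol}(\Omega_k)^{1/n} = m_k^{1/n}(1+o(1))$ one concludes $\int_{M \setminus B(p_k,\mu m_k^{1/n})}|u_k|\,dv_g = o(m_k)$, contradicting the standing hypothesis.

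The hardest step is the sharp extraction: a naive pigeonhole on the coarea identity produces a perimeter bound with a loss depending on $\min \sqrt{2W}$, which is too weak to feed into Proposition~\ref{prop:almost-isoperimetric-in-a-small-ball}. Obtaining the sharp constant $\sigma$ requires exploiting simultaneously the coarea identity, the isoperimetric lower bound at each level, the Jensen-type inequality for $v \mapsto v^{(n-1)/n}$ weighted by the finite measure $\sqrt{2W(s)}\,ds$, and the tight energy bound $\mathcal{E}_{\varepsilon_k}(u_k) = \sigma I_M(m_k)(1+o(1))$, which together force almost every level set to be near the isoperimetric optimum in the small-volume regime.
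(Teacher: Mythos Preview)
Your approach is genuinely different from the paper's and considerably more laborious. The paper does not attempt to extract a good level set from each $u_k$ by hand. Instead, it keeps the double-indexed contradiction sequence $(u_{i,j})$ with $m_i\to 0$ and, for each fixed $i$, $\varepsilon_{i,j}\to 0$, and simply invokes the Fonseca--Tartar compactness result (Theorem~\ref{theorem:recovery-sequence}): since $\mathcal{E}_{\varepsilon_{i,j}}(u_{i,j})\le c_i$ uniformly in $j$, a subsequence converges in $L^1$ to some $u_{0,i}=\mathbf{1}_{\Omega_i}$, and the $\Gamma$-$\liminf$ inequality immediately gives $\mathcal{E}_0(u_{0,i})\le\sigma I_M(m_i)+\theta m_i$, so $(u_{0,i})_i$ is almost isoperimetric and Proposition~\ref{prop:almost-isoperimetric-in-a-small-ball} applies directly. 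The passage back to $u_{i,j_i}$ is then just $\lVert u_{i,j_i}-u_{0,i}\rVert_{L^1}\le \tfrac{\alpha}{4}m_i$ for $j_i$ large. No truncation, no coarea, no level-set selection.

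Your route can probably be pushed through, but the two steps you flag as delicate really are, and your sketch does not close them. For the truncation, ``changes of order $o(m_k)$'' requires controlling $\int_{\{u_k<0\}}|u_k|$ and $\int_{\{u_k>1\}}(u_k-1)$; the energy only bounds the $L^2$ norms of these excesses by $O((\varepsilon_k m_k^{(n-1)/n})^{1/2})$, and passing to $L^1$ via Cauchy--Schwarz needs volume bounds on $\{u_k>1\}$ and $\{u_k<0\}$ that you have not supplied. For the ``sharp extraction'', the Jensen argument you outline compares $\int I_M(v_k(s))\,\sqrt{2W(s)}\,ds$ with $I_M(m_k)\int\sqrt{2W}$, but the layer-cake identity $\int_0^1 v_k(s)\,ds=m_k$ uses Lebesgue measure, not the measure $\sqrt{2W(s)}\,ds$, so Jensen does not directly force $v_k(s_k)\approx m_k$; you need an additional step showing $v_k$ is nearly constant on $(\eta_k,1-\eta_k)$ via the transition-layer bound \emph{and} that the tails $\int_0^{\eta_k}v_k$ and $\int_{1-\eta_k}^1 v_k$ are $o(m_k)$, which in turn forces $\eta_k=o(m_k/\mathrm{vol}(M))$ and hence a rather brutal requirement on $\varepsilon_k$. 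All of this is avoided by the paper's one-line appeal to compactness.
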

\begin{proof}
	Arguing by contradiction, there exists a sequence 
	$(m_{i})_{i \in \mathbb{N}}$ such that 
	$m_i \to 0^+$ and for every $i \in \mathbb{N}$
	there exist two sequences
	$(\varepsilon_{i,j})_{j\in \mathbb{N}} \in \mathbb{R}$
	and
	$(u_{i,j})_{j\in \mathbb{N}} \in \mathcal{E}_{\varepsilon_{i,j},m_i}^{c_i}$,
	with $c_i = I_M(m_i) + \theta m_i$,
	such that for every $i \in \mathbb{N}$
	we have
	$\varepsilon_{i,j}\to 0^+$ as $j \to \infty$
	and
	\begin{equation}
		\label{eq:in-a-small-semisphere-proof1}
		\int_{M\setminus B_g(p,\mu m_i^{1/n})} |u_{i,j}| \de v_g
		> \alpha m_i,
		\qquad \forall p \in \partial M,\, \forall j \in \mathbb{N}.
	\end{equation}
	Since for any fixed $i \in \mathbb{N}$ we have
	$\mathcal{E}_{\varepsilon_{i,j},m_i}(u_{i,j}) \le c_i$
        for every $j \in \mathbb{N}$,
	we can apply Theorem~\ref{theorem:recovery-sequence}
	to obtain
	a sequence of characteristic functions
	$(u_{0,i})_{i} \subset L^1(M,\mathbb{R})$
	such that, for every $i$,
	we have
	$ \lim_{j \to \infty}
	\lVert u_{i,j} - u_{0,i}\rVert_{L^1(M,\mathbb{R})} = 0$,
        up to subsequences
        (see also Remark~\ref{rem:convergence-valueE0}).
	Hence, for every $i$ there exists $j_i$ 
	such that 
	\begin{equation}
		\label{eq:in-a-small-semisphere-proof2}
		\int_{M} \big(|u_{i,j_{i}}| - u_{0,i}\big)\de v_g
		\le \frac{\alpha}{4} m_i,
	\end{equation}
	and
	we have also
	$\int_{M} u_{0,i} \de v_g = m_i$
	and $\mathcal{E}_0(u_{0,i}) \le c_i$,
	namely
	\[
		I_M(m_i) \le \frac{\mathcal{E}_0(u_{0,i})}{\sigma} \le I_M(m_i) + \theta m_i.
	\]
	Using also~\eqref{eq:omega-m-oIm},
	we obtain
	\[
		\lim_{i \to \infty}
		\frac{\mathcal{E}_0(u_{0,i})}{\sigma I_M(m_i)} = 1,
	\]
	so
	we can apply Proposition~\ref{prop:almost-isoperimetric-in-a-small-ball}
	to obtain the existence of a sequence
	$(p_i)_{i \in \mathbb{N}} \subset \partial M$
	such that
	\[
		\lim_{i \to \infty}
		\frac{1}{m_i}
		\left(
			\int_{M\setminus B(p_i,\mu m_{i}^{1/n})} u_{0,i}\, \de v_g
		\right)
		= 0,
	\]
	so there exists $i_0$ such that 
	\begin{equation}
		\label{eq:in-a-small-semisphere-proof3}
		\int_{M\setminus B(p_i,\mu m_{i}^{1/n})} u_{0,i}\, \de v_g
		\le \frac{\alpha}{4}m_i,
		\qquad \forall i \ge i_0.
	\end{equation}
	As a consequence, 
	combining~\eqref{eq:in-a-small-semisphere-proof2}
	and~\eqref{eq:in-a-small-semisphere-proof3},
	for every $i > i_0$
	we obtain 
	\begin{multline*}
		\label{Eq:almostAllVolumInBall-proof7}
		\int_{M\setminus B(p_i,\mu m_i^{1/n})}
		|u_{i,j_i}|\de v_g
		=
		\int_{M\setminus B(p_i,\mu m_i^{1/n})}
		\Big(|u_{i,j_i}| - u_{0,i}\Big) \de v_g \\
		+
		\int_{M\setminus B(p_i,\mu m_i^{1/n})}u_{0,i}\, \de v_g
		\\
		\le 
		\int_{M}
		\big(|u_{i,j_i}| - u_{0,i}\big) \de v_g
		+
		\int_{M\setminus B(p_i,\mu m_i^{1/n})}u_{0,i}\, \de v_g
		\le \frac{\alpha}{2}m_i,
	\end{multline*}
	which contradicts~\eqref{eq:in-a-small-semisphere-proof1}.
\end{proof}

Using the previous result,
we can ensure that if $m$ and $\varepsilon$
are sufficiently small,
the extrinsic barycenter of any function 
in the sublevel of $\mathcal{E}_{\varepsilon,m}$
that contains the image of the photograpy map
is near the boundary of the manifold.
More formally,
setting
\[
	\mathrm{diam}_{\mathbb{R}^{\tilde{n}}}(M)
	\coloneqq
	\max\big\{\lVert x - y \rVert_{\mathbb{R}^{\tilde{n}}}:
	x,y \in M\big\},
\]
we have the following result.
\begin{lemma}
	\label{lem:beta*-near-partialM}
	Assume that \ref{asu_nondegenerate} and \ref{asu_coercive} hold. For every $r > 0$, there exists
	$m_2 = m_2(M,g,r,\mathrm{diam}_{\mathbb{R}^{\tilde{n}}}(M)) > 0$
	such that for every $m \in (0,m_2)$
	there exists
	$\varepsilon_2 = \varepsilon_2(M,g,r,m) > 0$
	such that for every $\varepsilon \in (0,\varepsilon_2)$
	and any $u \in \mathcal{E}_{\varepsilon,m}^{\sigma I_M(m) + \theta m}$
	we have
	$\beta^*(u)\in (\partial M)_r$.
\end{lemma}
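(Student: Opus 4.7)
The plan is to use Lemma~\ref{lem:in-a-small-semisphere} to localize the $L^1$-mass of any $u\in \mathcal{E}_{\varepsilon,m}^{\sigma I_M(m)+\theta m}$ near a point $p_u\in\partial M$, and then to control the distance between the extrinsic barycenter $\beta^*(u)$ and $p_u$ by splitting the defining integral into a small ball around $p_u$ and its complement. Since $p_u\in\partial M$, this will directly give $\dist_{\mathbb{R}^{\tilde n}}(\beta^*(u),\partial M)\le \norm{\beta^*(u)-p_u}_{\mathbb{R}^{\tilde n}}\le r$, and hence $\beta^*(u)\in (\partial M)_r$.

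First I would fix $r>0$ and choose $\alpha\in (0,1)$ such that $\alpha\,\mathrm{diam}_{\mathbb{R}^{\tilde n}}(M)\le r/2$. Applying Lemma~\ref{lem:in-a-small-semisphere} with this $\alpha$ yields positive numbers $m_\alpha,\varepsilon_\alpha$ such that, whenever $m\in(0,m_\alpha)$ and $\varepsilon\in(0,\varepsilon_\alpha)$, every $u\in \mathcal{E}_{\varepsilon,m}^{\sigma I_M(m)+\theta m}$ admits a point $p_u\in\partial M$ with $\int_{M\setminus B(p_u,\mu m^{1/n})}|u|\,\de v_g\le \alpha m$. I would then further shrink $m_2\le\min\{m_\alpha,(r/(2\mu))^n\}$ so that $\mu m^{1/n}\le r/2$, and set $\varepsilon_2=\varepsilon_\alpha$.

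Next I would estimate $\beta^*(u)-p_u$. Setting $V\coloneqq \int_M |u|\,\de v_g$, I split
\[
\beta^*(u)-p_u=\frac{1}{V}\int_{B(p_u,\mu m^{1/n})}(x-p_u)|u(x)|\,\de v_g+\frac{1}{V}\int_{M\setminus B(p_u,\mu m^{1/n})}(x-p_u)|u(x)|\,\de v_g.
\]
On the first piece, $\norm{x-p_u}_{\mathbb{R}^{\tilde n}}\le \dist_g(x,p_u)\le \mu m^{1/n}$ (the extrinsic distance is bounded above by the intrinsic one for an isometric embedding), so this term has norm at most $\mu m^{1/n}$. On the second piece I use $\norm{x-p_u}_{\mathbb{R}^{\tilde n}}\le \mathrm{diam}_{\mathbb{R}^{\tilde n}}(M)$ together with the concentration bound from Lemma~\ref{lem:in-a-small-semisphere}, yielding a contribution of norm at most $\alpha m\,\mathrm{diam}_{\mathbb{R}^{\tilde n}}(M)/V$. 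Since $|u|\ge u$ pointwise, the mass constraint forces $V\ge \int_M u\,\de v_g=m$, so this last term is bounded by $\alpha\,\mathrm{diam}_{\mathbb{R}^{\tilde n}}(M)\le r/2$. Combining the two estimates gives $\norm{\beta^*(u)-p_u}_{\mathbb{R}^{\tilde n}}\le \mu m^{1/n}+r/2\le r$, which concludes the argument. I do not expect a substantial obstacle here: the statement is a quantitative consequence of Lemma~\ref{lem:in-a-small-semisphere}, and the claimed dependencies of $m_2$ on $(M,g,r,\mathrm{diam}_{\mathbb{R}^{\tilde n}}(M))$ and of $\varepsilon_2$ on $(M,g,r,m)$ are inherited directly from that lemma.
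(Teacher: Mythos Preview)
Your proposal is correct and follows essentially the same approach as the paper: choose $\alpha$ so that $\alpha\,\mathrm{diam}_{\mathbb{R}^{\tilde n}}(M)\le r/2$, invoke Lemma~\ref{lem:in-a-small-semisphere}, split $\beta^*(u)-p_u$ over $B(p_u,\mu m^{1/n})$ and its complement, and then shrink $m$ so that $\mu m^{1/n}\le r/2$. You are in fact slightly more explicit than the paper in justifying the bound on the off-ball term, since you spell out that $V=\int_M|u|\,\de v_g\ge \int_M u\,\de v_g=m$ (a step the paper's proof uses implicitly).
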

\begin{proof}
	Recalling the definition of $\beta^*$ given by~\eqref{eq:def-beta*},
	for every $u \in H^1(M,\mathbb{R})$ and $y \in M$,
	let us define 
	\[
		\rho(u,y) =  
		\frac{|u(y)|}{\int_{M}| u(x)| \de v_g},
	\]
	so that 
	\[
		\beta^*(u) = \int_{M} x \rho(u,x)\de v_g
		\quad\text{and}\quad
		\int_M \rho(u,x) \de v_g = 1, \, \forall u \in H^1(M,\mathbb{R}).
	\]

	Let us choose $\alpha > 0$ such that
	\[
		\alpha \le \frac{r}{2\mathrm{diam}_{\mathbb{R}^{\tilde{n}}}(M) },
	\]
	and let $m_{\alpha}$ be given by
	Lemma~\ref{lem:in-a-small-semisphere}.
	Let us choose $m \in (0,m_{\alpha})$
	and let $\varepsilon \in (0,\varepsilon_{\alpha})$.
	By Lemma~\ref{lem:in-a-small-semisphere},
	for every $u \in \mathcal{E}_{\varepsilon,m}^{\sigma I_M(m) + \theta m}$
	there exists $p_u \in \partial M$
	such that~\eqref{eq:mass-in-a-small-semisphere} holds.
	As a consequence, for every $u \in \mathcal{E}_{\varepsilon,m}^{\sigma I_M(m) + \theta m}$
	we obtain
	\begin{multline*}
		\norm{\beta^*(u) - p_u}_{\mathbb{R}^{\tilde{n}}}
		= \norm*{\int_{M}\big(x - p_u\big)\rho(u,x)\de v_g}_{\mathbb{R}^{\tilde{n}}}\\
		\le \norm*{
			\int_{B(p_u,\mu m^{1/n})}
			\big(x - p_u\big)\rho(u,x)\de v_g
		}_{\mathbb{R}^{\tilde{n}}}
		+ \norm*{
			\int_{M\setminus B(p_u,\mu m^{1/n})}
			\big(x - p_u\big)\rho(u,x)\de v_g
		}_{\mathbb{R}^{\tilde{n}}}\\
		\le \mu m^{1/n} + \alpha \mathrm{diam}_{\mathbb{R}^{\tilde{n}}}(M)
		\le \mu m^{1/n} + \frac{r}{2}.
	\end{multline*}
	As a consequence, choosing $m_2 \in (0,m_{\alpha})$
	such that 
	\[
		\mu m_2^{1/n} \le \frac{r}{2},
	\]
	for all $m \in (0, m_2)$
	and for all $\varepsilon \in (0,\varepsilon_2)$,
	with $\varepsilon_{2} = \varepsilon_{\alpha}$,
	we obtain 
	\[
		\norm{\beta^*(u) - p_u}_{\mathbb{R}^{\tilde{n}}}
		\le r,
		\qquad \forall u \in \mathcal{E}_{\varepsilon,m}^{I_M(m) + \theta m},
	\]
	and we are done.
\end{proof}

\begin{remark}
	\label{rem:def-r1}
	By Lemma~\ref{lem:beta*-near-partialM}
	and the compactness of $\partial M$,
	there exists $r_1 > 0$ such that,
	choosing $m_2(M,g,r_1,\mathrm{diam}_{\mathbb{R}^{\tilde{n}}}) > 0$
	and $\varepsilon_2(M,g,r_1,m) > 0$ as in the lemma,
	for every $m \in (0,m_2)$
	and $\varepsilon \in (0,\varepsilon_2)$,
	the map $\mathcal{B} = \pi_{\partial M}\circ \beta^*\colon 
	\mathcal{E}_{\varepsilon,m}^{\sigma I_M(m) + \theta m}\to \partial M$
	is well defined and continuous.
\end{remark}

\subsection{Conclusion of the proof}
\label{sect_Neumann_completed}

The following two results, namely,
Lemma~\ref{lem:barycenter+photography}
and Lemma~\ref{lem:homotopy-barycenter+photography},
demonstrate that the barycenter of a function obtained
by applying the photography map to a point $p \in \partial M$
is in close proximity to $p$ itself
when $m$ and $\varepsilon$ are sufficiently small.
As a result,
taking into account the definition of $\mathcal{B}$
provided in~\eqref{eq:def-Barycenter},
the composition
$\mathcal{B}\circ \mathcal{P}_{\varepsilon,m}\colon \partial M \to \partial M$
closely approximates the identity map.

\begin{lemma}
	\label{lem:barycenter+photography}
	Assume that \ref{asu_nondegenerate} and \ref{asu_coercive} hold. For every $r \in ]0,r_1[$
	there exists $m_3 = m_3(M,g,r) > 0$
	such that for every $m \in (0,m_3)$
	there exists $\varepsilon_3 = \varepsilon_3(M,g,r,m) > 0$
	such that
	\begin{equation}
		\label{eq:bound-on-dist-beta-photo}
		\norm{\beta^*(\mathcal{P}_{\varepsilon,m}(p)) - p}_{\mathbb{R}^{\tilde{n}}}
		\le r,
		\qquad \forall p \in \partial M.
	\end{equation}
\end{lemma}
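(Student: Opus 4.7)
The plan is to split the desired estimate via the triangle inequality,
\[
\norm{\beta^*(\mathcal{P}_{\varepsilon,m}(p)) - p}_{\mathbb{R}^{\tilde{n}}} \le \norm{\beta^*(u^m_{\varepsilon,p}) - \beta^*(u^m_{0,p})}_{\mathbb{R}^{\tilde{n}}} + \norm{\beta^*(u^m_{0,p}) - p}_{\mathbb{R}^{\tilde{n}}},
\]
and to force each summand below $r/2$ by first choosing $m_3$ small enough to kill the geometric term, then choosing $\varepsilon_3$ small enough (depending on $m$) to kill the approximation term. The compactness of $\partial M$ will be crucial in order to make all pointwise estimates uniform in $p$.

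For the geometric term, since $u^m_{0,p} = \mathbf{1}_{E_{p,m}}$ and $\int_M u^m_{0,p}\,\de v_g = m$, one has
\[
\norm{\beta^*(u^m_{0,p}) - p}_{\mathbb{R}^{\tilde{n}}} = \frac{1}{m}\norm*{\int_{E_{p,m}} (x - p)\,\de v_g}_{\mathbb{R}^{\tilde{n}}} \le \sup_{x \in E_{p,m}} \norm{x - p}_{\mathbb{R}^{\tilde{n}}}.
\]
By construction, $E_{p,m}$ is the region enclosed between $\partial M$ and the pseudo half-bubble $\Sigma_{p,r_{p,m},\omega^{p,r_{p,m}}}$, whose radius satisfies $r_{p,m} = O(m^{1/n})$ uniformly in $p$ (by \eqref{eq:fall-local-convergence}, \eqref{eq:fall-IM-convergence} and compactness of $\partial M$), while $\norm{\omega^{p,r_{p,m}}}_{\infty} \to 0$ as $m \to 0$ by Fall's lemma. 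Since $M \hookrightarrow \mathbb{R}^{\tilde{n}}$ is Lipschitz, the extrinsic diameter of $E_{p,m}$ is also $O(m^{1/n})$, so a suitable $m_3 = m_3(M,g,r) > 0$ makes this first summand at most $r/2$ for every $m \in (0, m_3)$ and every $p \in \partial M$.

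For the approximation term, fix $m \in (0, m_3)$. The explicit formula from Remark~\ref{rem:epsilon-approximation} gives $u^m_{\varepsilon,p} = \widetilde{q}_\varepsilon(d_{M \setminus E_{p,m}} + \delta_{\varepsilon,E_{p,m}})$, and since $\widetilde{q}_\varepsilon \equiv 0$ on $(-\infty,0)$ and $\widetilde{q}_\varepsilon \equiv 1$ on $[\eta_\varepsilon,+\infty)$, the function $u^m_{\varepsilon,p}$ coincides with $u^m_{0,p}$ outside a tubular neighborhood of $\partial E_{p,m}$ of width at most $2\eta_\varepsilon$ (recall $\delta_{\varepsilon,E_{p,m}} \in [0,\eta_\varepsilon]$). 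Hence
\[
\norm{u^m_{\varepsilon,p} - u^m_{0,p}}_{L^1(M,\R)} \le C \eta_\varepsilon \sup_{p \in \partial M} \mathcal{H}^{n-1}(\partial E_{p,m}),
\]
where the supremum is finite by smoothness of $\Sigma_{p,m}$ and compactness of $\partial M$, and the right-hand side vanishes as $\varepsilon \to 0^+$. Because $\int_M |u^m_{\varepsilon,p}|\,\de v_g = \int_M |u^m_{0,p}|\,\de v_g = m$, specializing the estimate in the proof of Lemma~\ref{lem:beta*-continuity} (with $v_1 = v_2 = m$) yields
\[
\norm{\beta^*(u^m_{\varepsilon,p}) - \beta^*(u^m_{0,p})}_{\mathbb{R}^{\tilde{n}}} \le \frac{2 \norm{x}_\infty}{m} \norm{u^m_{\varepsilon,p} - u^m_{0,p}}_{L^1(M,\R)},
\]
which can be made at most $r/2$ uniformly in $p$ by choosing $\varepsilon_3 = \varepsilon_3(M,g,r,m) > 0$ sufficiently small. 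I expect the main delicate point to be precisely this uniformity in $p$; however, once the construction of $E_{p,m}$ and $u^m_{\varepsilon,p}$ is known to depend continuously on $p$ (Proposition~\ref{prop:photography-continuity}), the compactness of $\partial M$ converts the pointwise bounds into uniform ones, and the two $r/2$ bounds combine via the triangle inequality to give \eqref{eq:bound-on-dist-beta-photo}.
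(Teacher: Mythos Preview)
Your proof is correct and follows essentially the same approach as the paper: the same triangle-inequality split into a geometric term (controlled by the extrinsic diameter of $E_{p,m}$, which shrinks uniformly as $m\to 0$ by compactness of $\partial M$) and an approximation term (controlled by the $L^1$ convergence $u^m_{\varepsilon,p}\to u^m_{0,p}$ together with the continuity of $\beta^*$, made uniform in $p$ again by compactness). Your version is in fact slightly more quantitative on the approximation term than the paper, which simply invokes the $L^1$ convergence and Lemma~\ref{lem:beta*-continuity} without writing out the explicit bound.
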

\begin{proof}
	We recall that for every $p \in \partial M$
	and every $m \in (0,\vol(M))$,
	the radius $r_{p,m}$ is the one such that
	\[
		\int_{E_{p,r_{p,m}}} 1 \de v_g = m.
	\]
	By the compactness of $\partial M$,
	there exists $m_3 = m_3(M,g,r) > 0$
	such that for every
	$m \in (0,m_3)$
	and for every $p \in \partial M$
	we have $r_{p,m} < r/2$.
	As a consequence,
	for every $x \in E_{p,r_{p,m}} \subset M$ we have
	\begin{equation}
		\label{eq:bound-on-dist-beta-photo-proof1}
		\norm{x - p}_{\mathbb{R}^{\tilde{n}}} < \frac{r}{2}.
	\end{equation}
	Moreover, we recall that for every $m,\varepsilon > 0$
	the function $\mathcal{P}_{\varepsilon,m}(p)$
	is defined as the $\varepsilon$--approximation
	of $u_{0,x}^m	= \mathbf{1}_{E_{p,m}}$
	and that 
	$u_{\varepsilon,p}^m$
	converges to $u_{0,x}^m$
	in the $L^1(M,\mathbb{R})$ norm as $\varepsilon \to 0$.
	Hence, using also the continuity of $\beta^*$
	with respect to the $L^1$--norm ensured by 
	Lemma~\ref{lem:beta*-continuity}
	and again the compactness of $\partial M$,
	we obtain the existence of a constant 
	$\varepsilon_3 = \varepsilon_3(M,g,r,m) > 0$
	such that
	for every $\varepsilon \in (0,\varepsilon_3)$
	and for every $p \in \partial M$
	the following holds:
	\begin{equation}
		\label{eq:bound-on-dist-beta-photo-proof2}
		\norm{
			\beta^*(\mathcal{P}_{\varepsilon,m}(p))
			- \beta^*(u_{0,p}^m)
		}_{\mathbb{R}^{\tilde{n}}}
		=
		\norm{
			\beta^*(u_{\varepsilon,p}^m)
			- \beta^*(u_{0,p}^m)
		}_{\mathbb{R}^{\tilde{n}}}
		\le \frac{r}{2}.
	\end{equation}
	As a consequence, using
	both~\eqref{eq:bound-on-dist-beta-photo-proof1}
	and\eqref{eq:bound-on-dist-beta-photo-proof2},
	for every $m \in (0,m_3)$
	and for every $\varepsilon \in (0,\varepsilon_3)$
	we obtain
	\begin{multline*}
		\norm{\beta^*(\mathcal{P}_{\varepsilon,m}(p)) - p}_{\mathbb{R}^{\tilde{n}}}
		\le
		\norm{
			\beta^*(\mathcal{P}_{\varepsilon,m}(p))
			- \beta^*(u_{0,p}^m)
		}_{\mathbb{R}^{\tilde{n}}}
		+
		\norm{
			\beta^*(u_{0,p}^m) - p
		}_{\mathbb{R}^{\tilde{n}}}\\
		\le \frac{r}{2} 
		+
		\frac{1}{\int_M |u_{0,p}^m| \de v_g}
		\int_M\Big(
			\norm*{ x - p
			}_{\mathbb{R}^{\tilde{n}}}
			|u_{0,p}^m|
		\Big)\de v_g
		\le r,
		\qquad \forall p \in \partial M,
	\end{multline*}
	which concludes the proof.
\end{proof}

\begin{lemma}
	\label{lem:homotopy-barycenter+photography}
	Assume that \ref{asu_nondegenerate} and \ref{asu_coercive} hold.
        There exists $m^* = m^*(M,g,W) > 0$ such that 
	for any $m \in (0, m^*)$ there exists
	$\varepsilon^* = \varepsilon^*(M,g,W,m) > 0$ such that
	for any $\varepsilon \in (0, \varepsilon^*)$
	the composition map 
	\[
		\mathcal{B} \circ \mathcal{P}_{\varepsilon,m}
		\colon \partial M \to \partial M
	\]
	is well defined and homotopic to the identity map.
\end{lemma}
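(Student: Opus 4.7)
The plan is to assemble the constants from the previous lemmas so that $\CB \circ \CP_{\eps,m}$ is well-defined, and then to exhibit an explicit straight-line homotopy in the ambient space $\R^{\tilde n}$, post-composed with the nearest-point projection onto $\partial M$. Fix any $r \in (0, r_1)$, where $r_1$ is the radius provided by Remark~\ref{rem:def-r1}. Applying Lemma~\ref{lem:barycenter+photography} with this $r$ yields constants $m_3(M,g,r)$ and $\eps_3(M,g,r,m)$ such that $\norm{\beta^*(\CP_{\eps,m}(p)) - p}_{\R^{\tilde n}} \le r$ for every $p \in \partial M$. I then set
\[
	m^* \coloneqq \min\{m_1, m_2, m_3\},
	\qquad
	\eps^* \coloneqq \min\{\eps_1, \eps_2, \eps_3\},
\]
where $m_1, \eps_1$ come from Proposition~\ref{prop:photography-sublevel} and $m_2, \eps_2$ from Remark~\ref{rem:def-r1}. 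For $m \in (0,m^*)$ and $\eps \in (0,\eps^*)$, Proposition~\ref{prop:photography-sublevel} guarantees that $\CP_{\eps,m}$ maps into $\CE_{\eps,m}^{\sigma I_M(m) + \theta m}$, on which $\CB = \pi_{\partial M} \circ \beta^*$ is well-defined and continuous by Remark~\ref{rem:def-r1}; continuity of $\CP_{\eps,m}$ is Proposition~\ref{prop:photography-continuity}.

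Next, I would define the candidate homotopy $H\colon \partial M \times [0,1] \to \partial M$ by
\[
	H(p,t) \coloneqq \pi_{\partial M}\big( (1-t)\, \beta^*(\CP_{\eps,m}(p)) + t\, p \big),
\]
where the convex combination is taken in $\R^{\tilde n}$ using the isometric embedding of $M$. To see that $H$ is well-defined, note that since $p \in \partial M$, for every $t \in [0,1]$
\[
	\dist_{\R^{\tilde n}}\!\big((1-t)\beta^*(\CP_{\eps,m}(p)) + t p,\, \partial M\big)
	\le (1-t)\, \norm{\beta^*(\CP_{\eps,m}(p)) - p}_{\R^{\tilde n}}
	\le (1-t)\, r < r_1,
\]
so the argument of $\pi_{\partial M}$ always lies in the tubular neighbourhood $(\partial M)_{r_1}$ where the projection is well-defined and continuous.

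Continuity of $H$ then follows by composing the continuity of $\CP_{\eps,m}$ (Proposition~\ref{prop:photography-continuity}), of $\beta^*$ in the $L^1$-topology (Lemma~\ref{lem:beta*-continuity}, together with the continuous inclusion $H^1 \hookrightarrow L^1$), of the linear interpolation in $\R^{\tilde n}$, and of $\pi_{\partial M}$. The boundary values are immediate: $H(p,0) = \pi_{\partial M}(\beta^*(\CP_{\eps,m}(p))) = \CB(\CP_{\eps,m}(p))$, and $H(p,1) = \pi_{\partial M}(p) = p$ since $p \in \partial M$. Hence $\CB \circ \CP_{\eps,m}$ is homotopic to $\mathrm{id}_{\partial M}$.

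There is no serious obstacle here beyond bookkeeping: the nontrivial analytic content has already been established in Lemma~\ref{lem:barycenter+photography} (closeness of the composed map to the identity in $\R^{\tilde n}$) and in the construction of $\CB$ via the tubular neighbourhood $(\partial M)_{r_1}$. The only delicate point is to choose $r < r_1$ so that the straight-line path in $\R^{\tilde n}$ remains inside the tube where $\pi_{\partial M}$ is defined — which is exactly what the estimate above achieves.
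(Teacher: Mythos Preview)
Your proof is correct. The difference from the paper's argument lies in the choice of homotopy: the paper works intrinsically on $\partial M$, first introducing the injectivity radius $\mathrm{inj}(\partial M)$ and a comparison constant $C_{\partial M}$ with $\dist_g(p,q) \le C_{\partial M}\norm{p-q}_{\R^{\tilde n}}$, then combining Lemmas~\ref{lem:beta*-near-partialM} and~\ref{lem:barycenter+photography} to obtain $\dist_g(\CB(\CP_{\eps,m}(p)),p) \le \tfrac{1}{2}\,\mathrm{inj}(\partial M)$, and finally using the geodesic homotopy $F(t,p) = \exp^{\partial M}_p\!\big(t(\exp^{\partial M}_p)^{-1}(\CB(\CP_{\eps,m}(p)))\big)$. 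Your extrinsic straight-line homotopy in $\R^{\tilde n}$ post-composed with $\pi_{\partial M}$ is more economical here: it reuses directly the tubular neighbourhood $(\partial M)_{r_1}$ already set up for the definition of $\CB$, avoids the auxiliary constants $\mathrm{inj}(\partial M)$ and $C_{\partial M}$, and the estimate keeping the segment inside the tube is immediate from $\norm{\beta^*(\CP_{\eps,m}(p))-p}_{\R^{\tilde n}} \le r < r_1$. Both routes encode the same principle --- a map $C^0$-close to the identity on a compact manifold is homotopic to it --- but yours exploits the ambient embedding a bit more efficiently.
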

\begin{proof}
	For $p \in \partial M$,
	let $\mathrm{exp}^{\partial M}_{p}$ be the exponential map
	of $\partial M$ at $p$,
	and let $B^{\partial M}_{g}(p,{R})$ be the geodesic ball of center $p$
	and radius $R$ on $\partial M$.
	We denote by
	$\mathrm{inj}(\partial M)$
	the \emph{injectivity radius} of $\partial M$, that is
	\begin{equation*}
		\mathrm{inj}(\partial M)\coloneqq
		\inf_{p \in \partial M}\sup\big\{R>0
			\text{ s.t. }
			\mathrm{exp}^{\partial M}_{p}\colon B^{\partial M}_{g}(p,{R})
		\to \partial M \mbox{ is a diffeomorphism}\big\}
	\end{equation*}
	which is positive since $\partial M$ is a compact manifold.
	Moreover, 
	the compactness of $\partial M$ implies
	that there exists a constant $C_{\partial M}>0$
	such that 
	\begin{equation*}
		\dist_g(p,q) \leq C_{\partial M}\norm{p - q}_{\mathbb{R}^{\tilde{n}}}
		\qquad \forall p,q \in \partial M.
	\end{equation*}
	Recalling the definition of $r_1 > 0$ given in Remark~\ref{rem:def-r1},
	let
	\[
		r^* =  \min\big\{r_1,\mathrm{inj}(\partial M)/(4C_{\partial M})\big\},
	\]
	let $m_2 = m_2(M,g,r^*,\mathrm{diam}_{\mathbb{R}^{\tilde{n}}}(M)) > 0$
	be defined by Lemma~\ref{lem:beta*-near-partialM},
	and for every $m \in (0,m_2)$,
	set $\varepsilon_2 = \varepsilon_2(M,g,r^*,m) > 0$
	in the same way.
	Moreover, let $m_3 = m_3(M,g,r^*) > 0$
	and $\varepsilon_3 = \varepsilon_3(M,g,r^*,m)$
	be similarly defined by Lemma~\ref{lem:barycenter+photography}.
	Recalling Proposition~\ref{prop:photography-sublevel},
	let $m^* = \min\{m_1,m_2,m_3\}$
	and for every $m \in (0,m^*)$
	set $\varepsilon^* = \min\{\varepsilon_1,\varepsilon_2,\varepsilon_3\}$.
	By Proposition~\ref{prop:photography-sublevel},
	for any $p \in \partial M$
	we have 
	$\mathcal{P}_{\varepsilon,m}(p)
	\in \mathcal{E}_{\varepsilon,m}^{\sigma I_M(m) + \theta m}$,
	so we obtain
	\begin{multline}
		\label{eq:homotopy-barycenter+photography-proof1}
		\mathrm{dist}_g\big(\mathcal{B}(\mathcal{P}_{\varepsilon,m}(p)),p\big)
		\le C_{\partial M}\norm{\pi_{\partial M}
			(\beta^*
		(\mathcal{P}_{\varepsilon,m}(p)))-p}_{\mathbb{R}^{\tilde{n}}}\\
		\le C_{\partial M}\big(
			\norm{\pi_{\partial M}(\beta^*(\mathcal{P}_{\varepsilon,m}(p))) -
			\beta^*(\mathcal{P}_{\varepsilon,m}(p))}_{\mathbb{R}^{\tilde{n}}}
			+ 
			\norm{\beta^*(\mathcal{P}_{\varepsilon,m}(p))
			- p}_{\mathbb{R}^{\tilde{n}}}
		\big).
	\end{multline}
	Since $m^*$ and $\varepsilon^*$
	are, respectively,
	less than $m_2$ and $\varepsilon_2$ 
	defined as in Lemma~\ref{lem:beta*-near-partialM},
	we have
	\[
		\norm{\pi_{\partial M}(\beta^*(\mathcal{P}_{\varepsilon,m}(p))) -
		\beta^*(\mathcal{P}_{\varepsilon,m}(p))}_{\mathbb{R}^{\tilde{n}}}
		\le r^*,
	\]
	and, by applying in an analogous manner
	Lemma~\ref{lem:barycenter+photography},
	we obtain also
	\[
		\norm{\beta^*(\mathcal{P}_{\varepsilon,m}(p)) - p}_{\mathbb{R}^{\tilde{n}}}
		\le r^*.
	\]
	Hence, from~\eqref{eq:homotopy-barycenter+photography-proof1}
	we infer
	\begin{equation*}
		\mathrm{dist}_g\big(\mathcal{B}(\mathcal{P}_{\varepsilon,m}(p)),p\big)
		\le 2C_{\partial M}r^* \leq \frac{1}{2}\mathrm{inj}(\partial M).
	\end{equation*}
	As a consequence, the map
	$F\colon [0,1] \times \partial M \to \partial M$,
	given by
	\begin{equation*}
		F(t,p)\coloneqq
		\mathrm{exp}^{\partial M}_p\Big(t
			\big(\mathrm{exp}^{\partial M}_p\big)^{-1}
		\big(\mathcal{B}(\mathcal{P}_{\varepsilon,m}(p))\big)\Big),
	\end{equation*}
	is well-defined,
	continuous and gives a homotopy equivalence between
	$\mathcal{B}\circ \mathcal{P}_{\varepsilon,m}$ and
	the identity map in $\partial M$.
\end{proof}

Finally, we are ready to prove Theorem~\ref{theorem:ACH-Neumann}.
\begin{proof}[Proof of Theorem~\ref{theorem:ACH-Neumann}]
	For every $m,\varepsilon > 0$, 
	the functional $\mathcal{E}_{\varepsilon,m}$
	is clearly bounded below and,
	by Lemma~\ref{lem:E_eps_C2-PS},
	it is of class $C^1$ and satisfies the Palais-Smale condition.

	Let us choose $m^*$ as in 
	Lemma~\ref{lem:homotopy-barycenter+photography}
	and, for every $m \in (0,m^*)$,
	let $\varepsilon_{m}$ be equal to $\varepsilon^*$ of the same lemma
	and set $c_m = \sigma I_M(m) + \theta m$.
	By Proposition~\ref{prop:photography-continuity}
	and Remark~\ref{rem:def-r1}, 
	for every $\varepsilon \in (0,\varepsilon_m)$
	both
	$\mathcal{P}_{\varepsilon,m}\colon \partial M \to
	\mathcal{E}_{\varepsilon,m}^{c_m}$
	and $\mathcal{B}\colon \mathcal{E}_{\varepsilon,m}^{c_m} \to \partial M$
	are continuous
	and, by Lemma~\ref{lem:homotopy-barycenter+photography},
	their composition is homotopic to the identity.
	Then, Theorem~\ref{theorem:ACH-Neumann}
	directly follows from Theorem~\ref{theorem:photography}.
	In particular, there exist at least $\cat(\partial M)$
	critical points of $\mathcal{E}_{\varepsilon,m}$
	in $\mathcal{E}_{\varepsilon,m}^{c_m}$ and,
	recalling that $\cat(\partial M) > 1$
	and $\mathcal{H}_{m}$ is a contractible set
	(see Remark~\ref{rem:cat-partial-M}),
	there exist at least one critical point with energy larger than $c_m$.
	By Lemma~\ref{lem:E_eps_C2-PS}, 
	these critical points are solutions of~\eqref{eq:PDE_Neumann}.
	Moreover, if all the critical points of $\mathcal{E}_{\varepsilon,m}$
	are non-degenerate, then 
	Theorem~\ref{theorem:photography} and Lemma~\ref{lem:E_eps_C2-PS}
	ensure that
	\eqref{eq:PDE_Neumann} has at least $\mathscr{P}_1(\partial M)$
	solutions with energy less than $c_{m}$ and $\mathscr{P}_1(\partial M)-1$
	solutions with energy larger than $c_{m}$,
	and we are done.
\end{proof}

\section{Proof of Theorem \ref{theorem:ACH-Dirichet}}%
\label{sec:dirichlet}
The proof of Theorem~\ref{theorem:ACH-Dirichet} follows
essentially the same structure of the proof of Theorem~\ref{theorem:ACH-Neumann},
by applying again Theorem~\ref{theorem:photography}.
Relying on some $\Gamma$--convergence results for 
$\overline{\CE}_{\eps,m}$ (\cite{owen-rubinstein-sternberg,MR1382825}),
we can construct the photography map
$\mathcal{P}_{\varepsilon,m}\colon M \to \overline{\mathcal{E}}_{\varepsilon,m}^c$,
where the value of $c$ can be estimated 
exploiting the convergence of the isoperimetric problem
on $M$ that takes into account also
the boundary $\partial M$ to the standard Euclidean case 
as $m$ goes to $0$.

\subsection{The isoperimetric problem}

We define the functional $\overline{\mathcal{E}}_0\colon L^1(M,\mathbb{R}) \to \mathbb{R}$
as
\begin{equation*}
	\overline{\mathcal{E}}_0(u)\coloneqq \begin{cases}
		\sigma\CH^{n-1}(\partial^* \Omega), 
		&\mbox{ if } u=\mathbf{1}_{\Omega},\\
		+\infty, &\mbox{ otherwise},
	\end{cases}
\end{equation*}
where $\sigma \in \mathbb{R}$ is given by~\eqref{eq:def-sigma}.
%As above, one can give an alternative and equivalent definition of
%$\overline{\mathcal{E}}_0$ as the relative perimeter of $\partial \Omega$
%plus its intersection with $\partial M$.
Let $\overline{I}\colon ]0,\vol(M)] \to \mathbb{R}$ be the following function:
\begin{equation}
	\label{IP_boundary}
	\overline{I}_{M}(m)\coloneqq 
	\inf\left\{ \mathcal{H}^{n-1}(\partial^*\Omega):
		\Omega \in \mathcal{C}_g(M)
		\text{ and }
	\int_M \mathbf{1}_{\Omega} \de v_g = m\right\}.
\end{equation}
The quantity $\overline{I}_M$ is the isoperimetric profile of $M$
that takes also into account the boundary $\partial M$.
For small volumes, the function $\overline{I}_{M}$
converges to its Euclidean counterpart,
which is $I_{\mathbb{R}^n}(m) = c_n m^{\frac{n-1}{n}}$,
where $c_n$ is the Euclidean isoperimetric constant.
More formally, we have the following result.
\begin{lemma}
	\label{lem:lineIM-to-IR}
	The following equality holds:
	\begin{equation}
		\label{eq:lineIM-to-IR}
		\lim_{m \to 0^+}
		\frac{\overline{I}_M(m)}{I_{\mathbb{R}^n}(m)}
		=
		\lim_{m \to 0^+}
		\frac{\overline{I}_M(m)}{c_n m^{\frac{n-1}{n}}}
		= 1.
	\end{equation}
\end{lemma}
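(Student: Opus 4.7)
The plan is to prove $\lim_{m \to 0^+} \overline{I}_M(m)/I_{\mathbb{R}^n}(m) = 1$ by establishing matching asymptotic upper and lower bounds.

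For the upper bound, I would fix any interior point $p_0 \in \mathrm{int}(M)$ and a radius $\rho_0 > 0$ with $B(p_0, \rho_0) \subset \mathrm{int}(M)$. For each $m$ small enough, the geodesic ball $B(p_0, r_m)$ of volume $m$ fits inside $B(p_0, \rho_0)$, so its measure-theoretic boundary lies entirely in $\mathrm{int}(M)$. Standard small-geodesic-ball expansions in normal coordinates centered at $p_0$ give $\mathcal{H}^{n-1}(\partial B(p_0, r_m)) = (1 + o(1)) c_n m^{(n-1)/n}$, whence $\overline{I}_M(m) \leq (1 + o(1)) I_{\mathbb{R}^n}(m)$ by taking $\Omega = B(p_0, r_m)$ as a competitor.

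For the lower bound, I would mimic the compactness-and-rescaling strategy of Proposition~\ref{prop:almost-isoperimetric-in-a-small-ball}. Pick any sequence $m_k \to 0^+$, take near-minimizers $\Omega_k \in \mathcal{C}_g(M)$ with $\mathrm{vol}_g(\Omega_k) = m_k$ and $\mathcal{H}^{n-1}(\partial^*\Omega_k) \leq \overline{I}_M(m_k) + m_k^2$, and rescale the metric to $g_k \coloneqq m_k^{-2/n} g$ so that $\Omega_k$ has unit volume and uniformly bounded perimeter in $(M, g_k)$. Following Remark~\ref{rem:how-use-Antonelli}, apply Theorem~\ref{theorem:antonelli2022} to the $\mathrm{RCD}$ spaces $(M, g_k, \mathcal{H}^n_k)$ (rather than to $(\mathrm{int}(M), g_k, \mathcal{H}^n_k)$), so that the perimeters used in the theorem include the contribution coming from $\partial M$. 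This yields, up to a subsequence, a finite decomposition into pieces $\Omega_{k,i} \subset \Omega_k$ with $L^1$-limits $F_i$ living either in a pointed Euclidean space $(\mathbb{R}^n, 0)$ or in a pointed Euclidean half-space $(\mathbb{R}^n_+, 0)$, satisfying $\sum_i \mathcal{H}^n(F_i) = 1$ and $\sum_i P(F_i, Y_i) \leq \liminf_k P(\Omega_k, M)$.

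The crucial observation is that in the half-space case, the perimeter $P(F_i, \mathbb{R}^n_+)$ that survives the limit still counts the portion of $\partial^* F_i$ lying on $\partial \mathbb{R}^n_+$ (again by Remark~\ref{rem:how-use-Antonelli}), and this coincides with the Euclidean perimeter $P(F_i, \mathbb{R}^n)$ once $F_i$ is viewed as a subset of $\mathbb{R}^n$. Hence $P(F_i, Y_i) \geq I_{\mathbb{R}^n}(\mathcal{H}^n(F_i))$ in both cases, and subadditivity of the concave function $I_{\mathbb{R}^n}(m) = c_n m^{(n-1)/n}$ (which vanishes at $0$) gives
\begin{equation*}
	\liminf_{k \to \infty} \frac{\overline{I}_M(m_k)}{I_{\mathbb{R}^n}(m_k)}
	\geq \frac{\sum_i I_{\mathbb{R}^n}(\mathcal{H}^n(F_i))}{I_{\mathbb{R}^n}(1)} \geq 1.
\end{equation*}
The main technical obstacle I anticipate is exactly this tracking of the $\partial M$ contribution through the pointed Gromov--Hausdorff limit: working naively on $\mathrm{int}(M)$ would only recover the relative profile $I_{\mathbb{R}^n_+}(m)$ as in Proposition~\ref{prop:almost-isoperimetric-in-a-small-ball}, which is \emph{strictly smaller} than $I_{\mathbb{R}^n}(m)$ and hence insufficient. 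Choosing $(M, g, \mathcal{H}^n)$ as the underlying metric measure space, as highlighted in Remark~\ref{rem:how-use-Antonelli}, is what preserves the flat piece on $\partial \mathbb{R}^n_+$ in the limit and permits the Euclidean isoperimetric lower bound to apply.
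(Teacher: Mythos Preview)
Your proposal is correct and follows essentially the same route as the paper: the upper bound via small interior geodesic balls is identical, and for the lower bound both you and the paper rescale to unit volume, apply Theorem~\ref{theorem:antonelli2022} on $(M,g_k,\mathcal{H}^n_k)$ (so that the boundary contribution to the perimeter is retained, cf.\ Remark~\ref{rem:how-use-Antonelli}), and compare the limit pieces against the Euclidean isoperimetric profile. The only cosmetic differences are that the paper argues by contradiction with genuine isoperimetric sets (invoking the last clause of Theorem~\ref{theorem:antonelli2022} to get bounded isoperimetric limit pieces, which are then juxtaposed in $\mathbb{R}^n$), whereas you work directly with near-minimizers and conclude via the subadditivity of $m\mapsto c_n m^{(n-1)/n}$; these are interchangeable.
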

\begin{proof}
	For every $p \in \mathrm{int}(M)$,
	if $m$ is sufficiently small we have that the geodesic ball
	centered at $p$ and with volume $m$ does not intersect the boundary.
	Setting $u_{0,p}^m \in L^1(M,\mathbb{R})$ the characteristic function of this ball,
	by \cite[Lemma 3.10]{MR2529468} we have
	\[
		\overline{\mathcal{E}}_{0}(u_{0,p}^m) = \sigma c_n m^{\frac{n-1}{n}} + o(m^{\frac{n-1}{n}}).
	\]
	As a consequence, if $m$ is sufficiently small we have
	$\overline{I}_{M}(m) \le c_n m^{\frac{n-1}{n}} + o(m^{\frac{n-1}{n}})$,
	so we obtain 
	\[
		\limsup_{m\to 0}
		\frac{\overline{I}_M(m)}{I_{\mathbb{R}^n}(m)} \le 1.
	\]
	So it remains to prove that
	\[
		\liminf_{m\to 0}
		\frac{\overline{I}_M(m)}{I_{\mathbb{R}^n}(m)} \ge 1.
	\]
	Arguing by contradiction,
	there exists a sequence $(m_k)_{k \in \mathbb{N}}$ which goes to $0$,
	a sequence of isoperimetric sets
	$(\Omega_k)_{k \in \mathbb{N}}\subset \mathcal{C}_g(M)$
	such that $\vol(\Omega_k) = m_k$ for every $k\in\mathbb{N}$
	and $\alpha \in (0,1)$ such that
	\begin{equation*}
		\overline{I}_{M}(m_k)
		=\mathcal{H}^{n-1}(\partial^*\Omega)
		\le \alpha c_n m_k^{\frac{n-1}{n}},
		\qquad \forall k \in \mathbb{N}.
	\end{equation*}
	Now, we rely on Theorem~\ref{theorem:antonelli2022}
	to obtain the desired contradiction.
	%	Arguing as in the proof
	%	of Proposition~\ref{prop:almost-isoperimetric-in-a-small-ball},
	We consider the sequence of manifolds of bounded geometry
	given by $(M_k,g_k,\mathcal{H}_{k}^n) = (M,m_{k}^{-1/n}g,\mathcal{H}_{k}^n)$.
	We remind that this sequence is such that
	$\mathcal{H}_{k}^n(\Omega_k) = 1$ for every $k \in \mathbb{N}$,
	where $\mathcal{H}_{k}^{n}$ denotes the $n$--dimensional Hausdorff measure 
	on $(M_k,g_k)$.
	Moreover, the rescaling factor is such that 
	\begin{equation*}
		\mathcal{H}^{n-1}_k(\partial^*\Omega_k)\le \alpha c_n,
		\qquad \forall k \in \mathbb{N}.
	\end{equation*}
	This means that $(\Omega_k)_{k \in \mathbb{N}}$
	is a sequence of sets with uniformly bounded volumes and perimeters
	and we can apply Theorem~\ref{theorem:antonelli2022}.
	Hence, there exists a non-decreasing sequence
	$(N_k)_{k \in \mathbb{N}}\subset \mathbb{N}$
	and for every $k$ we have a set of points
	$\{p_{k,i} \in M_k, i = 1,\dots,N_k\}$
	such that
	$(M_k,\dist_{g_k},\mathcal{H}^n_k,p_{k,i})$
	converges in the Gromov-Hausdorff topology 
	as $k\to \infty$
	to either the Euclidean semi-space
	$(\mathbb{R}^n_+,\de,\mathcal{H}^n,0)$
	or the Euclidean space $(\mathbb{R}^n,\de,\mathcal{H}^n,0)$.
	For every $k\in \mathbb{N}$ we have
	a set of pairwise disjoint sets
	$\Omega_{k}^i\subset \Omega_k \subset M$, for $i = 1,\dots,N_k$,
	such that, setting $\bar{N} = \lim_{k\to \infty}N_k$,
	we have 
	$\lim_{k\to\infty}\Omega_{k}^i = \Omega_{\infty}^i\subset \mathbb{R}^n$
	for every $i = 1,\dots,\bar{N}$
	and 
	\begin{equation*}
		\sum_{i = 1}^{\bar{N}} \mathcal{H}^{n}(\Omega_{\infty}^i) 
		= \lim_{k \to \infty}\mathcal{H}^n_k(\Omega_k)
		= 1.
	\end{equation*}
	Moreover, we obtain the following inequality
	\begin{equation}
		\label{eq:lineIM-to-IR-proof1}
		\sum_{i = 1}^{\bar{N}}\mathcal{H}^{n-1}(\partial^* \Omega_{\infty}^i)
		\le \liminf_{k \to \infty}\mathcal{H}_{k}^{n-1}(\partial^*\Omega_{k})
		< c_n,
	\end{equation}
	and every $\Omega^{i}_{\infty}$
	is an isoperimetric set in $\mathbb{R}^n_+$
	or in $\mathbb{R}^n$.
	As a consequence, for every $i$ 
	we know that $\Omega^{i}_{\infty}$
	is a bounded set,
	so we can see them as a family of pairwise 
	disjoint sets in $\mathbb{R}^n$.
	This means that the set $\Omega_{\infty} = \cup_{i = 1}^{\bar{N}}\Omega_{\infty}^i$
	is a subset of $\mathbb{R}^n$ with volume $1$
	and, by~\eqref{eq:lineIM-to-IR-proof1},
	its perimeter is strictly less than $c_n$,
	which is the perimeter of the ball with the same volume,
	obtaining the desired contradiction.
\end{proof}

\subsection{Gamma convergence}

\begin{proposition}[cf. Proposition A and Proposition B of \cite{MR1382825}]
	\label{prop:Gamma-conv-Dirichlet}
	Assume that \ref{asu_nondegenerate} and \ref{asu_coercive} hold.
	Then, the following statements hold:
	\begin{enumerate}[i)]
		\item \textbf{Lim-inf}:
			If $(\eps_k)_{k \in \N}\subset ]0,+\infty[$
			is such that $\eps_k \to 0^+$
			and $(u_{\eps_k})_{k \in \N} \subset H^1(M,\R)$
			is such that $u_{\eps_k} \to u_0$ in $L^1(M,\R)$,
			then $\liminf_{k \to \infty}
			\overline{\mathcal{E}}_{\varepsilon_k}(u_{\eps_k})
			\ge \overline{\mathcal{E}}_0(u_0)$;
		\item \textbf{Lim-sup}:
			For any $u_0 \in L^1(M,\R)$ such that $u_0 = \mathbf{1}_{\Omega}$
			for some finite perimeter measurable set $\Omega \subset M$ 
			and for every sequence
			$(\eps_k)_{k \in \N}\subset ]0,+\infty[$ such that $\eps_k \to 0^+$,
			there exists $(u_{\eps_k})_{k \in \N} \subset H^1(M,\R)$
			such that $u_{\eps_k} \to u_0$ in $L^1(M,\R)$,
			\[
				\int_{M} u_{\varepsilon_k} \de v_g = \int_M u_0\de v_g
				\quad\text{and}\quad
				\limsup_{k \to \infty}
				\overline{\mathcal{E}}_{\varepsilon_k}(u_{\eps_k})
				\le \overline{\mathcal{E}}_0(u_0).
			\]
	\end{enumerate}

\end{proposition}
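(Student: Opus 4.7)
The plan is to adapt the classical Modica--Mortola argument recalled in Theorem~\ref{theorem:G-convergence} to the Dirichlet setting, where the homogeneous boundary condition forces the recovery sequence to vanish on $\partial M$. This creates an additional transition layer concentrated near $\partial^*\Omega \cap \partial M$ whose limiting energy is exactly $\sigma \mathcal{H}^{n-1}(\partial^*\Omega \cap \partial M)$, and this accounts for the difference between $\mathcal{E}_0$ and $\overline{\mathcal{E}}_0$ (the latter uses the full perimeter $\mathcal{H}^{n-1}(\partial^*\Omega)$ rather than only its interior part). The cleanest route is to reduce to the Neumann/closed-manifold case via an extension trick.

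For the \emph{lim-inf} inequality, I would isometrically embed $M$ into a closed Riemannian manifold $(\tilde M, \tilde g)$ with $M \subset \mathrm{int}(\tilde M)$ (e.g., by doubling $M$ across $\partial M$ and smoothing the resulting metric). Given a sequence $u_{\eps_k} \in H^1_0(M,\R)$, extend each by zero to $\tilde u_{\eps_k} \in H^1(\tilde M, \R)$. Since $W(0) = 0$ and $\nabla \tilde u_{\eps_k}$ vanishes on $\tilde M \setminus M$, one has $\tilde{\mathcal{E}}_{\eps_k}(\tilde u_{\eps_k}) = \overline{\mathcal{E}}_{\eps_k}(u_{\eps_k})$. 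If $u_{\eps_k} \to \mathbf{1}_\Omega$ in $L^1(M)$, then $\tilde u_{\eps_k} \to \mathbf{1}_\Omega$ in $L^1(\tilde M)$, so applying Theorem~\ref{theorem:G-convergence}(i) on $\tilde M$ yields
\[
\liminf_{k \to \infty} \overline{\mathcal{E}}_{\eps_k}(u_{\eps_k}) \ge \sigma \mathcal{H}^{n-1}(\partial^* \Omega \cap \mathrm{int}(\tilde M)) = \sigma \mathcal{H}^{n-1}(\partial^* \Omega),
\]
where the last equality holds because $\partial M$ lies in the interior of $\tilde M$, so the reduced boundary of $\Omega$ in $\tilde M$ picks up the jump of $\mathbf{1}_\Omega$ across $\partial M$, recovering the full boundary contribution entering $\overline{\mathcal{E}}_0(\mathbf 1_\Omega)$.

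For the \emph{lim-sup} inequality, I would modify the construction of Remark~\ref{rem:epsilon-approximation} so that the resulting $u_{\eps_k}$ vanishes on $\partial M$. Concretely, define
\[
d_{\eps}(x) \coloneqq \min\bigl\{d_{M \setminus \Omega}(x),\ \dist_g(x,\partial M) - \eta_{\eps}\bigr\}, \qquad u_{\eps_k}(x) \coloneqq \widetilde q_{\eps_k}\bigl(d_{\eps_k}(x) + \delta_{\eps_k,\Omega}\bigr),
\]
with $\eta_{\eps_k} \to 0^+$ chosen so that the profile has already transitioned to zero on $\partial M$, and $\delta_{\eps_k,\Omega}$ adjusted as in \eqref{eq:def-delta-epsilon-Omega} to preserve the mass. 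The effective zero-level set of $d_{\eps_k}$ is the union of $\partial^*\Omega \cap \mathrm{int}(M)$ and a hypersurface parallel to $\partial M$ at distance $\eta_{\eps_k}$; applying the co-area formula to $\widetilde q_{\eps_k} \circ d_{\eps_k}$ together with the saturation inequality $\tfrac{\eps}{2}|\nabla u|^2+\tfrac{1}{\eps}W(u) \ge \sqrt{2W(u)}\,|\nabla u|$ then shows that the interior and boundary contributions separate and converge to $\sigma \mathcal{H}^{n-1}(\partial^*\Omega \cap \mathrm{int}(M))$ and $\sigma \mathcal{H}^{n-1}(\partial^*\Omega \cap \partial M)$ respectively, summing to $\overline{\mathcal{E}}_0(\mathbf 1_\Omega)$.

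The main obstacle I anticipate is controlling the interaction of the two transition layers in a neighbourhood of the codimension-two set $\partial^*\Omega \cap \partial M$, where the interior interface meets the boundary: one must ensure that no extra energy accumulates at this junction in the limit. This can be handled by a density argument, first approximating $\Omega$ by sets whose reduced boundary meets $\partial M$ transversally (so the junction has zero $\mathcal{H}^{n-1}$-measure and the two profiles are uniformly transverse), then passing to the limit by diagonal extraction. An alternative, more expedient route, which I would ultimately prefer, is to invoke Propositions~A and~B of~\cite{MR1382825} directly, verifying only that their hypotheses on $W$ are implied by \ref{asu_nondegenerate}--\ref{asu_coercive} and that the Euclidean construction therein extends to the Riemannian setting (a standard partition-of-unity/normal coordinates argument, as in the Neumann case).
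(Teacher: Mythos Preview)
The paper does not supply a proof of this proposition at all: it is stated with the attribution ``cf.\ Proposition~A and Proposition~B of \cite{MR1382825}'' and then used as a black box, exactly the ``more expedient route'' you mention in your final paragraph. So your preferred approach coincides with the paper's, and the detailed sketch you give beforehand (extension by zero to a closed manifold for the lim-inf, and a modified Modica profile forced to vanish near $\partial M$ for the lim-sup) goes well beyond what the paper provides.

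One small point on your lim-inf sketch: the proposition as stated takes $(u_{\eps_k}) \subset H^1(M,\R)$, not $H^1_0(M,\R)$, so your zero-extension argument only applies once you observe that $\overline{\mathcal{E}}_{\eps_k}$ is implicitly $+\infty$ off $H^1_0$ (otherwise the inequality is vacuous). It would be worth saying this explicitly. Also, doubling $M$ across $\partial M$ produces a metric that is only Lipschitz along the seam; since the Modica--Mortola lim-inf inequality is local and insensitive to this mild singularity you are fine, but ``smoothing the metric'' as you suggest would perturb $\mathcal{H}^{n-1}$ and should be avoided --- better to note that Theorem~\ref{theorem:G-convergence}(i) goes through on the doubled manifold directly.
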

\begin{remark}
	\label{rem:recovery-seq-Dirichlet}
	Noticing that the compactness result
	ensured by Theorem~\ref{theorem:recovery-sequence}
	refers to the functional $\mathcal{E}_{\varepsilon}\colon H^1(M,\mathbb{R}) \to \mathbb{R}$,
	as for the case of Neumann boundary condition we have
	that if $(\varepsilon_k)_{k \in \mathbb{N}}\subset ]0,+\infty[$
	is a sequence such that $\varepsilon_k \to 0^+$
	and if a sequence
	$(u_{\varepsilon_k})_{k \in \mathbb{N}}\subset \mathcal{H}_{m,0}$
	satisfies
	$\overline{\mathcal{E}}_{\varepsilon_k}(u_{\varepsilon_k})\le E^*$
	for some constant $E^*$,
	then, up to subsequences, $(u_{\varepsilon_k})_{k \in \mathbb{N}}$
	converges to a function $u_{0} \in L^1(M,\mathbb{R})$.
	By applying the lim-inf property 
	of Proposition~\ref{prop:Gamma-conv-Dirichlet},
	we have that $\overline{\mathcal{E}}_0(u_0)\le E^*$,
	hence there exists a measurable and finite perimeter set $\Omega\subset M$
	such that $u_0 = \mathbf{1}_{\Omega}$
	and $\vol(\Omega) = \int_{M}u_0 \de v_g = m$.
\end{remark}

\subsection{Photography map}
\label{sec:photo-Dirichlet}
Since now we are dealing with the Dirichlet boundary condition,
namely $u\equiv 0$ on $\partial M$,
the photography map is defined by utilizing a \emph{boundary layer}. 
The main idea is to associate to each point of the manifold
the $\varepsilon$--approximation of the characteristic function of
the geodesic ball with prescribed volume centered in the point.
However, when the point is near the boundary,
such a ball may intersect with the boundary
and the $\varepsilon$--approximation does not satisfy the boundary condition.
To avoid this problem, we construct a boundary layer map
$\mathcal{L}\colon M \to M$
that slightly moves inside the points
that lie on tubular neighbourhood of $\partial M$.
In this way, the geodesic ball of prescribed volume $m$
centered at $\mathcal{L}(p)$ is in the interior of $M$ for every $p\in M$,
provided $m$ sufficiently small.

Let us proceed with a formal construction. 
Since the boundary of $M$ is smooth and compact,
there exists $\delta_M > 0$ such that the map
\[
	\partial M \times [0,\delta_M] \ni
	(Q,t) \mapsto 
	\exp_{Q}(t N_{\partial M})
\]
provides a coordinate system in a neighbourhood of $\partial M$,
where we recall that $N_{\partial M}$
stands for the unit interior normal vector field along $\partial M$.
Now, for any $p \in M$ such that $\dist(p,\partial M)\le \delta_M$,
we denote by $(Q_p,t_p)$ the unique element in $\partial M \times [0,\delta_M]$
such that
\[
	p = \exp_{Q_p}(t_p N_{\partial M}).
\]
In other words, $t_p$ is the distance of $p$ to the boundary,
while $Q_p$ is its projection on it.
Now, let us choose a $C^\infty$ function $h\colon [0,\delta_M] \to [0,\delta_M]$
that is strictly increasing (hence invertible),
$h(0) = \delta_M/2$, $h(\delta_M) = \delta_M$
and $h'(\delta_M) = 1$.
Our boundary-layer function
$\mathcal{L}\colon M \to M$
is defined as follows:
\begin{equation}
	\label{eq:def-boundary-layer}
	\mathcal{L}(p) \coloneqq
	\begin{dcases}
		\exp_{Q_p}\big(h(t_p) N_{\partial M}\big),
		&		\mbox{if } \dist(p,\partial M) \in [0,\delta_M], \\
		p, &		\mbox{if } \dist(p,\partial M) \ge \delta_M.
	\end{dcases}
\end{equation}
Notice that $\mathcal{L}$ is a $C^1$--map 
homotopic to the identity.

Since $\dist(\mathcal{L}(p),\partial(M))\ge \delta_M/2$
for every $p \in M$ and $M$ is compact,
there exists a sufficiently small volume,
say $m_0 > 0$, such that 
for any $p \in M$ and $m \in (0,m_0)$
the geodesic ball centered at $\mathcal{L}(p)$
and with volume $m$ doesn't intersect the boundary
of the manifold.
More formally, denoting by $r_{q,m} > 0$ the radius 
of the geodesic ball centered at $q\in M$ with volume $m$,
hence 
$\int_{B(q,r_{q,m})}1 \de v_g = m$,
we have
\begin{equation}
	\label{eq:def-m_0}
	B(\mathcal{L}(p),r_{\mathcal{L}(p),m}) \cap \partial M = \emptyset,
	\qquad \forall p \in M, m \in (0,m_0).
\end{equation}
For any $p \in M$ and $m \in (0,m_0)$
let us denote by $u_{0,\mathcal{L}(p)}^m$ the 
characteristic function of 
$B(\mathcal{L}(p),r_{\mathcal{L}(p),m})$.
Moreover, for every $\varepsilon > 0$
let $u_{\varepsilon,\mathcal{L}(p)}^m \in \mathcal{H}_{m}$
be $\varepsilon$--approximation of $u_{0,\mathcal{L}(p)}^m$
given by the {lim-sup} property of 
Proposition~\ref{prop:Gamma-conv-Dirichlet}.
Since $M$ is compact and
$\supp\, u_{0,\mathcal{L}(p)}^{m} \subset\joinrel\subset \mathrm{int}(M)$
for every $p \in M$ and $m \in (0,m_0)$,
there exists $\varepsilon_0 > 0$
such that for every $\varepsilon \in ]0,\varepsilon_0[$
we have 
$u_{\varepsilon,\mathcal{L}(p)}^m \in \mathcal{H}_{m,0}$,
hence $u_{\varepsilon,\mathcal{L}(p)}\equiv 0$ on $\partial M$
(see Remark~\ref{rem:epsilon-approximation} for 
the details about the construction of the $\varepsilon$--approximation).

\begin{definition}
	\label{def:photography-Dirichlet}
	For every $m \in (0,m_0)$ and $\varepsilon \in (0,\varepsilon_0)$,
	we define the photography map
	$\mathcal{P}_{\varepsilon,m}\colon M \to \mathcal{H}_{m,0}$
	as follows:
	\[
		\mathcal{P}_{\varepsilon,m}(p) = u_{\varepsilon,\mathcal{L}(p)}^m.
	\]
\end{definition}
See Figure \ref{FIG_photography} for a representation of Definition \ref{def:photography-Dirichlet}.
\begin{figure}
    \centering
    \begin{tikzpicture}
    \node[anchor=south west] (image) at (0,0) {\includegraphics[width=\textwidth]{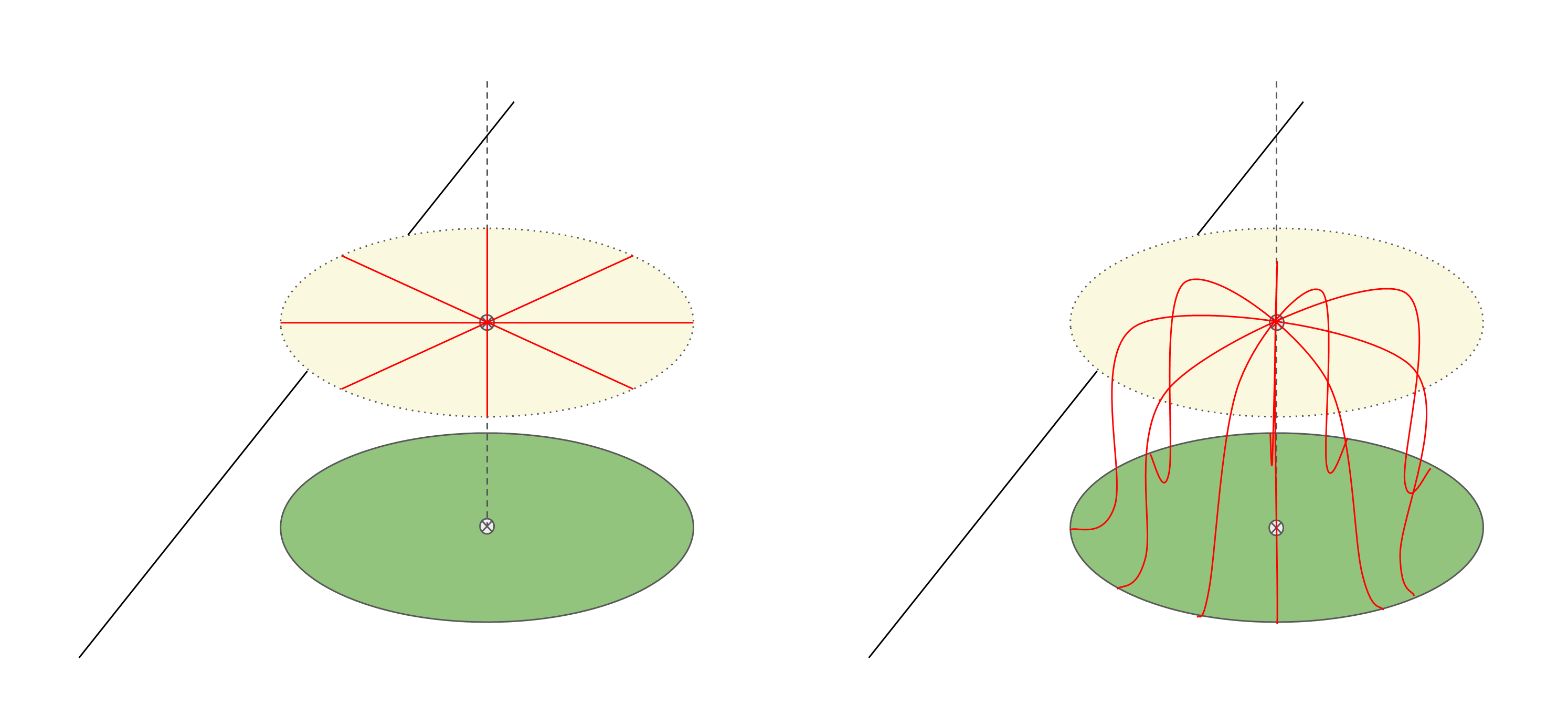}};
    %\draw [opacity=0.2] (0,0) grid (15,6);
    \put(45,60){$p$};
    \put(140,45){$\mathcal{L}(p)$};
    \put(110,150){$\partial M$};
    \put(50,20){$M$};
    \put(82,12){$B(\mathcal{L}(p),r_{\mathcal{L}(p),m})$};
    \put(195,50){$0$};
    \put(195,100){$1$};
    \put(150,135){$u^m_{0,\mathcal{L}(p)}$};
    %%%%%%%
    % SECONDA FIGURA
    %%%%%%%
    \put(258,60){$p$};
    \put(353,45){$\mathcal{L}(p)$};
    \put(321,150){$\partial M$};
    \put(263,20){$M$};
    \put(295,12){$B(\mathcal{L}(p),r_{\mathcal{L}(p),m})$};
    \put(408,50){$0$};
    \put(408,100){$1$};
    \put(363,135){$u^m_{\varepsilon,\mathcal{L}(p)}$};

    %% Punti e frecce
    \draw [->](2,2) -- (4.55,1.8);
    \draw [->](9.4,2) -- (11.98,1.8);
    \foreach \Point in {
		(2,2),
            (4.72,1.8),
            (9.4,2),
            (12.12,1.8)
	}{
		\node at \Point {\textbullet};
	}
    \end{tikzpicture}
    \caption{
    The map introduced in Definition \ref{def:photography-Dirichlet} turns the indicator function $u_{0,\mathcal{L}(p)}^m$ into the smooth approximation $u_{\eps,\mathcal{L}(p)}^m$ given by $\Gamma$-convergence (Proposition \ref{prop:Gamma-conv-Dirichlet}). The functions are supported on the balls $B(\mathcal{L}(p),r_{\mathcal{L}(p),m})$, whose center $\mathcal{L}(p)$ is obtained by sending a point $p$ away from the boundary through the boundary layer map $\mathcal{L}$.}
    \label{FIG_photography}
\end{figure}

\begin{proposition}
	\label{prop:photo-sublevel-Dirichlet}
	Assume that \ref{asu_nondegenerate} and \ref{asu_coercive} hold. There exists a function $\tau\colon \mathbb{R}^+ \to \mathbb{R}$
	such that 
	\[
		\lim_{m \to 0^+} \frac{\tau(m)}{m^{\frac{n-1}{n}}} = 0
	\]
	and there exists $m_1= m_1(M,g,W,\tau) \in (0,m_0)$
	such that for every $m \in (0,m_1)$
	there exists
	$\varepsilon_1 = \varepsilon_1(M,g,W,m) \in (0,\varepsilon_0)$
	such that for every $\varepsilon \in (0,\varepsilon_1)$
	we have
	\begin{equation}
		\label{eq:photo-sublevel-Dirichet}
		\overline{\mathcal{E}}_{\varepsilon,m}(\mathcal{P}_{\varepsilon,m}(p))
		\le \sigma \overline{I}_{M}(m) + \tau(m),
		\qquad \forall p \in M.
	\end{equation}
\end{proposition}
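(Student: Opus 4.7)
The plan is to mirror the strategy used for Proposition \ref{prop:photography-sublevel} in the Neumann case, combining a geometric estimate for $\overline{\mathcal{E}}_0(u^m_{0,\mathcal{L}(p)})$ with the $\Gamma$-$\limsup$ inequality from Proposition \ref{prop:Gamma-conv-Dirichlet}, and absorbing both error terms into the single function $\tau(m)$.

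First, I would estimate $\overline{\mathcal{E}}_0(u^m_{0,\mathcal{L}(p)}) = \sigma \mathcal{H}^{n-1}(\partial B(\mathcal{L}(p),r_{\mathcal{L}(p),m}))$ for the $0$-level energy of the indicator of the geodesic ball. By the computation already invoked in the proof of Lemma \ref{lem:lineIM-to-IR} (namely \cite[Lemma~3.10]{MR2529468}), the perimeter of a small geodesic ball of volume $m$ centered at a point $q\in M$ satisfies $\mathcal{H}^{n-1}(\partial B(q,r_{q,m})) = c_n m^{(n-1)/n} + o(m^{(n-1)/n})$. Because of the compactness of $M$ and the continuity of $p\mapsto \mathcal{L}(p)$, together with the fact that $\dist(\mathcal{L}(p),\partial M)\ge \delta_M/2$, this estimate is uniform in $p\in M$: there exists a function $\tau_1\colon \mathbb{R}^+\to \mathbb{R}$ with $\tau_1(m)/m^{(n-1)/n}\to 0$ such that $\overline{\mathcal{E}}_0(u^m_{0,\mathcal{L}(p)})\le \sigma c_n m^{(n-1)/n}+\tau_1(m)$ for every $p\in M$ and every sufficiently small $m$.

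Next, I would invoke Lemma \ref{lem:lineIM-to-IR} to write $\sigma c_n m^{(n-1)/n} = \sigma \overline{I}_M(m) + \tau_2(m)$ with $\tau_2(m)/m^{(n-1)/n}\to 0$. Combining the two estimates and setting $\tau(m) \coloneqq 2\bigl(\tau_1(m)+\tau_2(m)\bigr)$, we obtain, for $m$ in some interval $(0,m_1)$ with $m_1\in (0,m_0)$,
\begin{equation*}
\overline{\mathcal{E}}_0(u^m_{0,\mathcal{L}(p)})\le \sigma \overline{I}_M(m)+\tfrac{1}{2}\tau(m),\qquad \forall p\in M,
\end{equation*}
with a strict slack of $\tfrac12\tau(m)>0$ built in for the next step.

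Finally, I would pass from $\overline{\mathcal{E}}_0$ to $\overline{\mathcal{E}}_\varepsilon$ using the $\Gamma$-$\limsup$ statement of Proposition \ref{prop:Gamma-conv-Dirichlet}, together with the explicit construction of the recovery sequence recalled in Remark \ref{rem:epsilon-approximation} applied to $\Omega = B(\mathcal{L}(p),r_{\mathcal{L}(p),m})$. For each fixed $p$ we have $\limsup_{\varepsilon\to 0^+}\overline{\mathcal{E}}_\varepsilon(u^m_{\varepsilon,\mathcal{L}(p)})\le \overline{\mathcal{E}}_0(u^m_{0,\mathcal{L}(p)})$, and the continuous dependence of the signed distance function $d_{M\setminus B(\mathcal{L}(p),r_{\mathcal{L}(p),m})}$ on $p$ (as in the proof of Proposition \ref{prop:photography-continuity}), combined with the compactness of $M$, yields uniformity of this $\limsup$ in $p$. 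Hence there exists $\varepsilon_1=\varepsilon_1(M,g,W,m)\in(0,\varepsilon_0)$ such that $\overline{\mathcal{E}}_\varepsilon(u^m_{\varepsilon,\mathcal{L}(p)})\le \overline{\mathcal{E}}_0(u^m_{0,\mathcal{L}(p)})+\tfrac{1}{2}\tau(m)$ for all $\varepsilon\in (0,\varepsilon_1)$ and all $p\in M$, which chained with the previous bound gives \eqref{eq:photo-sublevel-Dirichet}.

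The main obstacle I anticipate is the uniformity in $p$ of the $\Gamma$-$\limsup$ step: the recovery sequence given by \eqref{eq:modica-approximation-qtilde} depends on $p$ through the signed distance function and the correction $\delta_{\varepsilon,E_{p,m}}$, so one has to either argue via compactness of $M$ and continuity of $p\mapsto u^m_{\varepsilon,\mathcal{L}(p)}$ in $H^1$, or repeat the explicit construction of \cite[Proposition 3.3]{benci-nardulli-osorio-piccione} with care to track uniform constants. The Neumann case in Proposition \ref{prop:photography-sublevel} handled this by exploiting strict inequality and compactness, and the same trick (made possible by the built-in slack $\tfrac12\tau(m)$) should carry through here.
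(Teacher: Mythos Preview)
Your proposal is correct and follows essentially the same route as the paper: use \cite[Lemma~3.10]{MR2529468} for the perimeter of small geodesic balls, combine with Lemma~\ref{lem:lineIM-to-IR}, and pass to $\overline{\CE}_\varepsilon$ via the $\Gamma$-$\limsup$ and compactness of $M$. One small technical oversight: your definition $\tau(m)=2(\tau_1(m)+\tau_2(m))$ does not automatically ensure $\tau(m)>0$, since $\tau_2(m)=\sigma(c_n m^{(n-1)/n}-\overline{I}_M(m))$ has no a priori sign, so the ``built-in slack of $\tfrac12\tau(m)>0$'' may fail; the paper avoids this by exploiting the explicit second-order expansion $\overline{\CE}_0(u^m_{0,\mathcal{L}(p)})=\sigma c_n m^{(n-1)/n}-\sigma\gamma_n\mathrm{Sc}_g(\mathcal{L}(p))m^{(n+1)/n}+\mathcal{O}(m^{(n+3)/n})$ to obtain a \emph{strict} inequality with positive slack $\omega m^{(n+1)/n}$ before invoking the $\limsup$, and only afterwards defines $\tau(m)=\sigma(c_n m^{(n-1)/n}-\overline{I}_M(m))+\omega m^{(n+1)/n}$. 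Your argument is easily repaired by the same device (or by simply adding a term like $m$ to $\tau$).
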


\begin{proof}
	By applying some standard results
	(see, e.g., \cite[Lemma 3.10]{MR2529468}),
	since $u_{0,\mathcal{L}(p)}^m$
	has compact support in the interior of $M$
	for every $m \in (0,m_0)$
	we have
	\begin{equation}
		\label{eq:perimeter-of-geo-balls}
		\overline{\mathcal{E}}_{0}(u_{0,\mathcal{L}(p)}^m)
		=
		\sigma\left(
			c_n m^{\frac{n-1}{n}}
			- \gamma_n \mathrm{Sc}_g(\mathcal{L}(p))m^{\frac{n+1}{n}}
		\right)
		+ \mathcal{O}(m^{\frac{n+3}{n}}),
		\qquad \text{as } m \to 0^+,
	\end{equation}
	where
	$c_n$ is the Euclidean isoperimetric constant,
	$\gamma_n$ is a constant which depends only on the dimension
	of the manifold and
	$\mathrm{Sc}_g(\mathcal{L}(p))$
	denotes the scalar curvature of the metric tensor $g$
	at the point $\mathcal{L}(p)$.
	Since $M$ is compact,
	there exists a constant $\omega > 0$ 
	and $m_1 \in (0,m_0)$ such that
	for every $m\in (0,m_1)$ we have
	\begin{equation*}
		\overline{\mathcal{E}}_{0}(u_{0,\mathcal{L}(p)}^m)
		< \sigma c_n m^{\frac{n-1}{n}} + \omega m^{\frac{n+1}{n}},
		\qquad \forall p \in M.
	\end{equation*}
	Since the last inequality is strict and 
	using again the compactness of $M$,
	by the lim-sup property of Proposition~\ref{prop:Gamma-conv-Dirichlet}
	there exists $\varepsilon_1 = \varepsilon_1(M,g,m) \in (0,\varepsilon_0)$
	such that for every $\varepsilon \in (0,\varepsilon_1)$
	we have
	\begin{equation}
		\label{eq:level-photo-Dirichlet-proof}
		\overline{\mathcal{E}}_{\varepsilon,m}(u_{\varepsilon,\mathcal{L}(p)}^m)
		\le\sigma  c_n m^{\frac{n-1}{n}} + \omega m^{\frac{n+1}{n}},
		\qquad \forall p \in M.
	\end{equation}
	By~\eqref{eq:lineIM-to-IR},
	the function $\tau_0\colon \mathbb{R}^+ \to \mathbb{R}$
	given by 
	$\tau_0(m) = c_n m^{\frac{n-1}{n}}- \overline{I}_{M}(m)$
	is a $o(m^{\frac{n-1}{n}})$ as $m$ goes to $0$.
	As a consequence, setting $\tau(m) = \sigma \tau_0(m) + \omega m^{\frac{n+1}{n}}$,
	\eqref{eq:level-photo-Dirichlet-proof}
        is equivalent to~\eqref{eq:photo-sublevel-Dirichet}.
	%\begin{equation*}
	%	\overline{\mathcal{E}}_{\varepsilon,m}(u_{\varepsilon,\mathcal{L}(p)}^m)
	%	\le \sigma \overline{I}_{M}(m) + \tau(m),
	%	\qquad \forall p \in M,
	%\end{equation*}
	%and we are done.
\end{proof}

Since $\mathcal{L}\colon M \to M$ is a continuous function,
one can employ the same construction of the proof of
Proposition~\ref{prop:photography-continuity}
to obtain the continuity of the photography map,
which is stated by the next result.
\begin{proposition}
	\label{prop:continuity-photo-Dirichlet}	
	Assume that \ref{asu_nondegenerate} and \ref{asu_coercive} hold. Let $m_1> 0$ and $\varepsilon_1 > 0$ be defined as in 
	Proposition~\ref{prop:photo-sublevel-Dirichlet}.
	For every $m \in (0,m_1)$ and $\varepsilon \in (0,\varepsilon_1)$
	the map $\mathcal{P}_{\varepsilon,m}\colon M \to \mathcal{H}_{m,0}$
	is continuous.
\end{proposition}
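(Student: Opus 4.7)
The plan is to mimic closely the argument of Proposition \ref{prop:photography-continuity} from the Neumann case, with the only new ingredient being the presence of the boundary layer map $\mathcal{L}$. Since $\mathcal{L}\colon M \to M$ is $C^1$ by construction \eqref{eq:def-boundary-layer}, it is enough to establish that the map
\[
	q \in \mathcal{L}(M) \;\longmapsto\; u^m_{\varepsilon,q} \in H^1(M,\mathbb{R})
\]
is continuous, where $u^m_{\varepsilon,q}$ denotes the $\varepsilon$-approximation of the characteristic function $\mathbf{1}_{B(q,r_{q,m})}$ given by the lim-sup part of Proposition \ref{prop:Gamma-conv-Dirichlet}. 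For $m \in (0,m_0)$ and $q \in \mathcal{L}(M)$, each ball $B(q,r_{q,m})$ lies at uniformly positive distance from $\partial M$, so all the geometric quantities involved vary smoothly with $q$.

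Recalling the explicit construction in Remark \ref{rem:epsilon-approximation}, we write
\[
	u^m_{\varepsilon,q}(x) = \widetilde{q}_{\varepsilon}\bigl( d_{M \setminus B(q,r_{q,m})}(x) + \delta_{\varepsilon,B(q,r_{q,m})} \bigr).
\]
Since $\widetilde{q}_{\varepsilon}$ is Lipschitz for fixed $\varepsilon$, we obtain, exactly as in the proof of Proposition \ref{prop:photography-continuity}, an estimate of the form
\[
	\norm{u^m_{\varepsilon,q_1} - u^m_{\varepsilon,q_2}}_{H^1(M,\mathbb{R})} \le C\Bigl(\norm{d_{M\setminus B(q_1,r_{q_1,m})} - d_{M\setminus B(q_2,r_{q_2,m})}}_{H^1(M,\mathbb{R})} + \bigl| \delta_{\varepsilon,B(q_1,r_{q_1,m})} - \delta_{\varepsilon,B(q_2,r_{q_2,m})} \bigr|\Bigr),
\]
for some constant $C$ depending on $M,g,\varepsilon,W$.

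It then remains to verify that each term on the right-hand side vanishes as $q_1 \to q_2$. The radius $r_{q,m}$ depends smoothly on $q$ via the implicit function theorem applied to the volume equation $\int_{B(q,r_{q,m})}\de v_g = m$, so the boundary of $B(q,r_{q,m})$ is a smoothly varying smooth hypersurface and the associated signed distance function depends continuously on $q$ in $H^1$. Continuity of the correction term $\delta_{\varepsilon,B(q,r_{q,m})}$ follows from a further application of the implicit function theorem to \eqref{eq:def-delta-epsilon-Omega}, yielding a $C^1$ dependence $q \mapsto \delta_{\varepsilon,B(q,r_{q,m})}$. The main technical point, exactly as in the Neumann setting, is the $H^1$-continuity of the signed distance function under smooth variation of its reference hypersurface; however, this is worked out in detail in \cite[Proposition 4.14]{benci-nardulli-osorio-piccione}, so the present proposition ultimately reduces to combining those estimates with the continuity of $\mathcal{L}$.
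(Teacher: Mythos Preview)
Your proposal is correct and follows exactly the approach the paper intends: the paper's own proof is simply ``Cf.\ Proposition~\ref{prop:photography-continuity}'', and you have faithfully unpacked that reference, noting that the only new feature is the composition with the $C^1$ boundary layer map $\mathcal{L}$ and that the rest reduces to the same $H^1$-continuity estimate for the signed distance function and implicit-function-theorem argument for $\delta_{\varepsilon,\Omega}$ already carried out in the Neumann case and in \cite[Proposition 4.14]{benci-nardulli-osorio-piccione}.
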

\begin{proof}
	Cf. Proposition~\ref{prop:photography-continuity}.
\end{proof}

\subsection{Barycenter map}
The construction of the barycenter map is analogous to
the case of Neumann boundary condition.
In particular, we will use again
the function $\beta^*\colon L^1(M,\mathbb{R}) \to \mathbb{R}^{\tilde{n}}$
defined in~\eqref{eq:def-beta*},
which gives the ``extrinsic'' center of mass of a function
in the Euclidean space $\mathbb{R}^{\tilde{n}}$ where the manifold $M$
is isometrically embedded.
In this case, we will compose this map with 
the nearest point projection map
$\pi_M\colon \mathbb{R}^{\tilde{n}} \to M$.
We need to prove that, when $m$ and $\varepsilon$
are sufficiently small,
$\beta^*(u)$ is near the manifold for any
function $u$ that belongs to the sublevel of
$\overline{\mathcal{E}}_{\varepsilon,m}$
that contains the image of the photography map,
namely for every 
$u \in \overline{\mathcal{E}}_{\varepsilon,m}^c$
with $c = \sigma\overline{I}_{M}(m) + \tau(m)$
(see Proposition~\ref{prop:photo-sublevel-Dirichlet}).
To obtain the last result, we rely on the ``concentration property'',
namely that as $m$ goes to $0$
the support of every function in 
$\overline{\mathcal{E}}_{\varepsilon,m}^c$
is inside a small ball,
up to a negligible part.

\begin{proposition}
	\label{prop:almost-isoperimetric-in-a-small-ball-Dirichlet}
	There exists $\mu = \mu(M,g)> 0$ such that the following property holds.
	For every almost isoperimetric sequence
	$(u_k)_{k\in \mathbb{N}}\subset  L^1(M,\mathbb{R})$
	with volumes $m_k = \int_{M} u_{k}\de v_g \to0$,
	i.e., 
	\begin{equation}
		\label{eq:almost-isoperimetric-in-a-small-ball-Dirichlet-HP}
		\lim_{k \to \infty}
		\frac{\overline{\mathcal{E}}_{0}(u_k)}{\sigma \overline{I}_M(m_k)} = 1,
	\end{equation}
	there exists a sequence $(p_k)_{k \in \mathbb{N}} \subset M$
	such that
	\begin{equation}
		\label{eq:almost-isoperimetric-in-a-small-ball-Dirichlet}
		\lim_{k \to +\infty}
		\frac{1}{m_k}
		\left(
			\int_{M\setminus B(p_k,\mu m_{k}^{1/n})} u_{k} \de v_g
		\right)
		= 0.
	\end{equation}
\end{proposition}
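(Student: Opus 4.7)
The plan is to adapt the proof of Proposition~\ref{prop:almost-isoperimetric-in-a-small-ball} to the Dirichlet setting. The main conceptual change is that here the relevant perimeter is the \emph{full} one in $M$ (including the part on $\partial M$, as in Remark~\ref{rem:how-use-Antonelli}), and the limiting isoperimetric constant is $c_n = I_{\mathbb{R}^n}(1)$ rather than $c_n^+ = I_{\mathbb{R}^n_+}(1)$; consequently the concentration point $p_k$ may lie either in the interior of $M$ or on $\partial M$.

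Write $u_k = \mathbf{1}_{\Omega_k}$ with $\Omega_k \in \mathcal{C}_g(M)$ and rescale the metric exactly as in the proof of Proposition~\ref{prop:almost-isoperimetric-in-a-small-ball}, so that in the rescaled space $(X_k, g_k, \mathcal{H}^n_k)$ the set $\Omega_k$ has volume $1$. By hypothesis~\eqref{eq:almost-isoperimetric-in-a-small-ball-Dirichlet-HP} combined with Lemma~\ref{lem:lineIM-to-IR}, the full perimeters satisfy
\[
    \lim_{k \to \infty} P(\Omega_k, X_k) = c_n,
\]
and are uniformly bounded. Then apply Theorem~\ref{theorem:antonelli2022} with the full-perimeter convention to extract, up to subsequences, points $p_{k,i} \in X_k$, disjoint pieces $\Omega_{k,i} \subset \Omega_k$, pointed Gromov--Hausdorff limits $(Y_i, \dist_{Y_i}, \mathcal{H}^n, p_i)$, and finite-perimeter sets $F_i \subset Y_i$ with $\sum_i \mathcal{H}^n(F_i) = 1$ and $\sum_i P(F_i, Y_i) \le c_n$. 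Each $Y_i$ is isometric to either $\mathbb{R}^n$ or $\mathbb{R}^n_+$, depending on whether $\dist_{g_k}(p_{k,i}, \partial M)$ diverges or stays bounded as $k \to \infty$.

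The crux of the argument is to show that $\overline{N} = 1$. Since every $F \subset Y_i$ can be viewed as a finite-perimeter set in $\mathbb{R}^n$, with its full perimeter in $Y_i$ bounding that in $\mathbb{R}^n$, the classical Euclidean isoperimetric inequality yields
\[
    P(F_i, Y_i) \ge c_n \mathcal{H}^n(F_i)^{(n-1)/n}.
\]
Setting $v_i = \mathcal{H}^n(F_i) \in (0,1]$, we obtain $\sum_i v_i^{(n-1)/n} \le 1$ while $\sum_i v_i = 1$. Since $x \mapsto x^{(n-1)/n}$ is strictly concave on $[0,\infty)$ with value zero at zero (for $n \ge 2$), it is strictly subadditive, so $\sum_i v_i^{(n-1)/n} > \bigl(\sum_i v_i\bigr)^{(n-1)/n} = 1$ unless exactly one summand is nonzero. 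This forces $\overline{N} = 1$ and $v_1 = 1$; moreover $F_1$ saturates the Euclidean isoperimetric inequality and is thus a Euclidean ball of fixed radius.

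Finally, the $L^1$-strong convergence $\Omega_{k,1} \to F_1$ and the smallness of $\mathcal{H}^n_k(\Omega_k \setminus \Omega_{k,1})$ yield, after unscaling by $m_k^{-1/n}$, the claimed concentration of $u_k$ around a sequence of points $p_k \in M$ corresponding to (e.g.) the centers of mass of $\Omega_{k,1}$; choosing $\mu$ slightly larger than the radius of $F_1$ (plus the distance from $p_1$ to the center of $F_1$) gives~\eqref{eq:almost-isoperimetric-in-a-small-ball-Dirichlet}. The main subtlety, compared to the Neumann case, is that one cannot rule out $\mathbb{R}^n$-limits by comparing isoperimetric constants (they coincide here with the $\mathbb{R}^n_+$ full-perimeter constant), so mass-splitting is excluded only via the strict subadditivity of the Euclidean isoperimetric profile.
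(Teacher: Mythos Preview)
Your proposal is correct and follows essentially the same route as the paper: rescale so that $\mathcal{H}^n_k(\Omega_k)=1$, apply Theorem~\ref{theorem:antonelli2022} with the full-perimeter convention on $(M,g_k)$ (cf.\ Remark~\ref{rem:how-use-Antonelli}), and exclude mass splitting via the strict subadditivity of $m\mapsto c_n m^{(n-1)/n}$. You are in fact slightly more explicit than the paper in two places: you spell out why $P(F_i,Y_i)\ge c_n\,\mathcal{H}^n(F_i)^{(n-1)/n}$ holds regardless of whether $Y_i=\mathbb{R}^n$ or $Y_i=\mathbb{R}^n_+$, and you indicate concretely how to extract $\mu$ and $p_k$ from the $L^1$-strong convergence $\Omega_{k,1}\to F_1$ (the paper just asserts this last step).
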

\begin{proof}
	The proof is analogous to the one of
	Proposition~\ref{prop:almost-isoperimetric-in-a-small-ball},
	and it is based on the compactness result ensured by 
	Theorem~\ref{theorem:antonelli2022}.

	Let $(\Omega_k)_{k \in \mathbb{N}}\subset \mathcal{C}_g$
	such that $u_k = \mathbf{1}_{\Omega_k}$
	for every $k$.
	Let $(X_k,g_k) = (M,m^{-1/k}g)$,
	so that $\mathcal{H}^n_k(\Omega_k) = 1$ for every $k$,
	where $\mathcal{H}^{n}_k$ denotes the $n$--dimensional Hausdorff measure on $(M_k,g_k)$.
	By~\eqref{eq:lineIM-to-IR}
	and~\eqref{eq:almost-isoperimetric-in-a-small-ball-Dirichlet-HP},
	we have also that 
	\begin{equation*}
		\lim_{k\to\infty}P(\Omega_k,X_k)
		= \lim_{k\to\infty}\mathcal{H}_k^{n-1}(\partial^*\Omega_k)
		= I_{\mathbb{R}^n}(1).
	\end{equation*}
	This means that, seen as subsets of $(M_k,g_k)$,
	the sequence $(\Omega_k)_{k\in \mathbb{N}}$
	has uniformly bounded volumes and perimeters and we can apply
	Theorem~\ref{theorem:antonelli2022}
        to obtain
	a number $\overline{N} \in \mathbb{N}\cup\{+\infty\}$,
	a family of pointed $\mathrm{RCD}(\kappa,n)$
	spaces $(M^i_\infty,\de^i_{\infty},\mathcal{H}^{n}_{\infty},p^i)$,
	for $i = 1,\dots,\overline{N}$,
	and $\Omega^i_{\infty}\subset M^i_{\infty}$
	such that 
	\begin{equation}
		\label{eq:almost-isoperimetric-in-a-small-ball-Dirichlet-proof1}
		\sum_{i = 1}^{\overline{N}}\mathcal{H}^n_{\infty}(\Omega^i_{\infty})
		= \lim_{k\to\infty}\mathcal{H}^{n}_k(\Omega_k)
		= 1,
	\end{equation}
	and
	\begin{equation}
		\label{eq:almost-isoperimetric-in-a-small-ball-Dirichlet-proof2}
		\sum_{i = 1}^{\overline{N}}\mathcal{H}^{n-1}_{\infty}(\partial^*\Omega^i_\infty)
		\le \liminf_{k \to \infty}\mathcal{H}^{n-1}_k(\partial^{*}\Omega_k)
		= I_{\mathbb{R}^n}(1).
	\end{equation}
	By construction, 
	each $(M^i_\infty,\de^i_{\infty},\mathcal{H}^{n}_{\infty},p^i)$
	is either the Euclidean semi-space
	$(\mathbb{R}^{n}_+,\de,\mathcal{H}^{n},0)$
	or the Euclidean space 
	$(\mathbb{R}^{n},\de,\mathcal{H}^{n},0)$.
	Since $I_{\mathbb{R}^n}(m) = c_nm^{\frac{n-1}{n}}$
	is a strictly concave function and $I_{\mathbb{R}^n}(0) = 0$,
	it is strictly subadditive and 
	by using a contradiction argument
	\eqref{eq:almost-isoperimetric-in-a-small-ball-Dirichlet-proof1}
	and~\eqref{eq:almost-isoperimetric-in-a-small-ball-Dirichlet-proof2}
	show that $\overline{N} = 1$.
	Using again~\eqref{eq:almost-isoperimetric-in-a-small-ball-Dirichlet-proof1},
	this implies the existence of $\mu > 0$
	and of a sequence of points $(p_k)_{k \in \mathbb{N}}\subset M$
	such that~\eqref{eq:almost-isoperimetric-in-a-small-ball-Dirichlet}
	holds.
\end{proof}

\begin{lemma}
	\label{lem:in-a-small-sphere-Dirichlet}
	Assume that \ref{asu_nondegenerate} and \ref{asu_coercive} hold
        and let $\tau\colon \mathbb{R}^+ \to \mathbb{R}$
	be given by Proposition~\ref{prop:photo-sublevel-Dirichlet}.
	For every $\alpha \in (0,1)$,
	there exists $m_{\alpha} = m_{\alpha}(M,g,W,\theta,\alpha) > 0$
	such that for every $m \in (0,m_{\alpha})$
	there exists $\varepsilon_{\alpha} = \varepsilon_{\alpha}(M,g,W,\theta,\alpha,m) > 0$
	such that for every $\varepsilon \in (0,\varepsilon_{\alpha})$
	and for any 
	$u \in \mathcal{E}_{\varepsilon,m}^{\sigma \overline{I}_M(m) + \tau(m)}$
	there exists a point $p_u \in M$
	such that
	\begin{equation}
		\label{eq:mass-in-a-small-sphere-Dirichlet}
		\int_{M\setminus B(p_u,\mu m^{1/n})} |u| \de v_g
		\le \alpha m.
	\end{equation}
\end{lemma}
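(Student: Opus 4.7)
The plan is to follow exactly the strategy of Lemma~\ref{lem:in-a-small-semisphere}, replacing each Neumann ingredient by its Dirichlet counterpart. I would argue by contradiction: if the conclusion fails for some $\alpha \in (0,1)$, there is a sequence $m_i \to 0^+$ and, for each $i$, sequences $\eps_{i,j} \to 0^+$ and functions $u_{i,j} \in \overline{\CE}_{\eps_{i,j},m_i}^{c_i}$ with $c_i = \sigma\overline{I}_M(m_i) + \tau(m_i)$ such that
\[
    \int_{M\setminus B(p, \mu m_i^{1/n})} |u_{i,j}| \, \de v_g > \alpha m_i
\]
for every $p \in M$ and every $j\in \mathbb{N}$.

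For each fixed $i$, I would apply the compactness statement of Remark~\ref{rem:recovery-seq-Dirichlet} to the sequence $(u_{i,j})_j$ in order to extract, up to a subsequence in $j$, an $L^1$--limit $u_{0,i} = \mathbf{1}_{\Omega_i}$ with $\int_M u_{0,i}\,\de v_g = m_i$ and $\overline{\CE}_0(u_{0,i}) \leq c_i$. Selecting $j_i$ large enough, I can arrange that $\|u_{i,j_i} - u_{0,i}\|_{L^1(M)} \le \alpha m_i/4$, which by the pointwise inequality $\bigl||a|-|b|\bigr|\le |a-b|$ and the nonnegativity of $u_{0,i}$ implies the same bound for $\int_M \bigl||u_{i,j_i}| - u_{0,i}\bigr|\,\de v_g$.

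Next, combining $\overline{\CE}_0(u_{0,i})/\sigma \le \overline{I}_M(m_i) + \tau(m_i)/\sigma$ with Lemma~\ref{lem:lineIM-to-IR} (which gives $\overline{I}_M(m)\sim c_n m^{(n-1)/n}$ as $m\to 0^+$) and the decay $\tau(m) = o(m^{(n-1)/n})$ supplied by Proposition~\ref{prop:photo-sublevel-Dirichlet}, I would deduce
\[
    \lim_{i\to\infty} \frac{\overline{\CE}_0(u_{0,i})}{\sigma \overline{I}_M(m_i)} = 1,
\]
so that $(u_{0,i})_i$ is an almost isoperimetric sequence in the sense required by Proposition~\ref{prop:almost-isoperimetric-in-a-small-ball-Dirichlet}. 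Invoking that proposition produces points $p_i \in M$ with $\int_{M\setminus B(p_i,\mu m_i^{1/n})} u_{0,i}\,\de v_g \le \alpha m_i/4$ for all sufficiently large $i$, and a final triangle inequality yields
\[
    \int_{M\setminus B(p_i,\mu m_i^{1/n})} |u_{i,j_i}|\,\de v_g
    \le \int_{M}\bigl||u_{i,j_i}| - u_{0,i}\bigr|\,\de v_g
    + \int_{M\setminus B(p_i,\mu m_i^{1/n})} u_{0,i}\,\de v_g
    \le \frac{\alpha}{2}m_i,
\]
contradicting the starting assumption.

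Since each Dirichlet ingredient (compactness, isoperimetric asymptotics, and the concentration result) has already been established earlier in the paper, no substantial new obstruction arises; the argument is essentially a mechanical transcription of the Neumann proof. The only delicate verification is that the relative error $\tau(m)/(\sigma\overline{I}_M(m))$ indeed vanishes as $m\to 0^+$, which is where the sharp asymptotic $\overline{I}_M(m)\sim c_n m^{(n-1)/n}$ from Lemma~\ref{lem:lineIM-to-IR} plays its decisive role, compensating for the fact that $\tau(m)$ is merely known to be $o(m^{(n-1)/n})$.
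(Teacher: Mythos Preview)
Your proposal is correct and follows essentially the same route as the paper's own proof: the same contradiction setup, the same appeal to compactness (Theorem~\ref{theorem:recovery-sequence} via Remark~\ref{rem:recovery-seq-Dirichlet}) to produce the limiting characteristic functions $u_{0,i}$, the same use of $\tau(m)=o(m^{(n-1)/n})$ together with Lemma~\ref{lem:lineIM-to-IR} to verify the almost-isoperimetric hypothesis, and the same application of Proposition~\ref{prop:almost-isoperimetric-in-a-small-ball-Dirichlet} followed by a triangle inequality. If anything, your treatment of the step $\int_M\bigl||u_{i,j_i}|-u_{0,i}\bigr|\,\de v_g \le \alpha m_i/4$ is slightly more careful than the paper's, which writes the integrand without the outer absolute value.
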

\begin{proof}
	Relying on the concentration result ensured by 
	Proposition~\ref{prop:almost-isoperimetric-in-a-small-ball-Dirichlet},
	the proof is analogous to the one of Lemma~\ref{lem:in-a-small-semisphere}.
	For the sake of clarity, we report here the main steps.

	Arguing by contradiction, there exists a sequence $(m_i)_{i \in \mathbb{N}}$
	that goes to $0$ and for every $i \in \mathbb{N}$
	there exists two sequences $(\varepsilon_{i,j})_{j \in\mathbb{N}}$
	and $(u_{i,j})_{j \in \mathbb{N}}\subset \overline{\mathcal{E}}_{\varepsilon_{i,j},m_i}^{c_i}$,
	with $c_i = \sigma \overline{I}_{M}(m_i)+\tau(m_i)$
	such that $\varepsilon_{i,j}\to 0^+$ as $j \to \infty$
	and 
	\begin{equation}
		\label{eq:in-a-small-sphere-proof1}
		\int_{M\setminus B_g(p,\mu m_i^{1/n})} |u_{i,j}| \de v_g
		> \alpha m_i,
		\qquad \forall p \in M,\, \forall j \in \mathbb{N}.
	\end{equation}
	Since $\overline{\mathcal{E}}_{\varepsilon_{i,j},m_i}(u_{i,j})\le c_i$
	for every $j$, by Theorem~\ref{theorem:recovery-sequence}
	we have that for every fixed $i \in\mathbb{N}$
	there exists a characteristic function $u_{0,i}\in L^1(M,\mathbb{R})$
	such that $u_{i,j}$ converges to it, up to subsequences.
	Therefore, for every $i \in \mathbb{N}$ there exists $j_i$
	sufficiently large such that
	\begin{equation}
		\label{eq:in-a-small-sphere-proof2}
		\int_{M}\left(|u_{i,j_i}-u_{0,i}|\right)\de v_g \le \frac{\alpha}{4}m_i.
	\end{equation}
	Moreover, by Remark~\ref{rem:recovery-seq-Dirichlet}
	we have 
	\[
		\overline{I}_{M}(m_i)
		\le\frac{\overline{\mathcal{E}}_{0}(u_{0,i})}{\sigma}
		\le \overline{I}_{M}(m_i) + \tau(m_i).
	\]
	Since $\tau(m) = o(m^{\frac{n-1}{n}})$ and~\eqref{eq:lineIM-to-IR}
	holds, the last chain of inequality implies that
	\begin{equation*}
		\lim_{i \to \infty}
		\frac{\overline{\mathcal{E}}_{0}(u_{0,i})}{\sigma \overline{I}_M(m_i)} = 1,
	\end{equation*}
	so we can apply Proposition~\ref{prop:almost-isoperimetric-in-a-small-ball-Dirichlet}
	and obtain a sequence of points $(p_i)_{i \in \mathbb{N}}\subset M$
	such that 
	\begin{equation}
		\label{eq:in-a-small-sphere-proof3}
		\lim_{i \to +\infty}
		\frac{1}{m_i}
		\left(
			\int_{M\setminus B(p_i,\mu m_{i}^{1/n})} u_{0,i} \de v_g
		\right)
		= 0.
	\end{equation}
	Utilizing the same reasoning employed in the concluding section
	of the proof for Lemma~\ref{lem:in-a-small-semisphere},
	\eqref{eq:in-a-small-sphere-proof2} and~\eqref{eq:in-a-small-sphere-proof3}
	lead us to the intended contradiction.
\end{proof}

As we have done for the case of Neumann conditions,
now that we have the ``concentration'' result ensured by 
Lemma~\ref{lem:in-a-small-sphere-Dirichlet},
it is possible to prove that if $m$ and $\varepsilon$ are sufficiently small
for any $u \in \overline{\mathcal{E}}_{\varepsilon,m}^{\sigma \overline{I}_{M}(m) + \tau(m)}$
we have that $\beta^*(u)$ is in a neighbourhood of $M\subset \mathbb{R}^{\tilde{n}}$.
More formally,
setting 
\[
	M_r = \left\{x \in \mathbb{R}^{\tilde{n}}: \dist_{\mathbb{R}^{\tilde{n}}}(x,M) < r\right\},
	\qquad \forall r > 0,
\]
we have the following result.
\begin{lemma}
	\label{lem:beta*-near-M-Dirichlet}
	Assume that \ref{asu_nondegenerate} and \ref{asu_coercive} hold. For any $r > 0$,
	there exists 
	$m_2 = m_2(M,g,r,\mathrm{diam}_{\mathbb{R}^{\tilde{n}}}(M)) > 0$
	such that for every $m \in (0,m_2)$
	there exists another positive constant
	$\varepsilon_2 = \varepsilon_2(M,g,r,m)$
	such that for every $\varepsilon \in (0,\varepsilon_2)$
	and any $u \in \overline{\mathcal{E}}_{\varepsilon,m}^{\sigma \overline{I}_M(m) + \tau(m)}$
	we have $\beta^*(u)\in M_r$.
\end{lemma}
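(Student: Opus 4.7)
The plan is to mirror the argument of Lemma~\ref{lem:beta*-near-partialM}, replacing the role played by $\partial M$ with that of $M$ itself and substituting Lemma~\ref{lem:in-a-small-semisphere} with its Dirichlet analogue, Lemma~\ref{lem:in-a-small-sphere-Dirichlet}. The idea is elementary once the concentration result is in hand: if nearly all the mass of $u$ sits in a small geodesic ball $B(p_u,\mu m^{1/n})$ with $p_u\in M$, then the extrinsic center of mass $\beta^*(u)$ must lie close to $p_u$, hence close to $M$.

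First, I will recall that $\beta^*(u)=\int_M x\,\rho(u,x)\,\de v_g$, where
\[
\rho(u,x)\coloneqq\frac{|u(x)|}{\int_M|u|\,\de v_g},\qquad \int_M\rho(u,x)\,\de v_g=1.
\]
Given $r>0$, choose $\alpha=\alpha(r)\in(0,1)$ so small that
\[
\alpha\cdot\mathrm{diam}_{\mathbb{R}^{\tilde n}}(M)\le \frac{r}{2}.
\]
Apply Lemma~\ref{lem:in-a-small-sphere-Dirichlet} with this $\alpha$ to obtain $m_\alpha>0$ and, for each $m\in(0,m_\alpha)$, some $\varepsilon_\alpha(m)>0$, such that for every $\varepsilon\in(0,\varepsilon_\alpha)$ and every $u\in \overline{\mathcal{E}}_{\varepsilon,m}^{\sigma\overline{I}_M(m)+\tau(m)}$ there exists $p_u\in M$ with
\[
\int_{M\setminus B(p_u,\mu m^{1/n})}|u|\,\de v_g\le \alpha m.
\]

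Next, I estimate the extrinsic distance $\|\beta^*(u)-p_u\|_{\mathbb{R}^{\tilde n}}$ by splitting the integral over the ball and its complement. Using that the extrinsic distance is dominated by the intrinsic distance on the isometrically embedded $M$, points $x\in B(p_u,\mu m^{1/n})$ satisfy $\|x-p_u\|_{\mathbb{R}^{\tilde n}}\le\mu m^{1/n}$; on the complement we bound $\|x-p_u\|_{\mathbb{R}^{\tilde n}}\le\mathrm{diam}_{\mathbb{R}^{\tilde n}}(M)$ and use the mass estimate above. Since $\int_M\rho(u,\cdot)\,\de v_g=1$, this yields
\[
\|\beta^*(u)-p_u\|_{\mathbb{R}^{\tilde n}}
\le \mu m^{1/n}+\alpha\,\mathrm{diam}_{\mathbb{R}^{\tilde n}}(M)
\le \mu m^{1/n}+\frac{r}{2}.
\]

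Finally, I choose $m_2\in(0,m_\alpha)$ with $\mu m_2^{1/n}\le r/2$ and set $\varepsilon_2\coloneqq\varepsilon_\alpha(m)$. Then for every $m\in(0,m_2)$, $\varepsilon\in(0,\varepsilon_2)$ and $u$ in the relevant sublevel, we get $\|\beta^*(u)-p_u\|_{\mathbb{R}^{\tilde n}}\le r$ with $p_u\in M$, so $\beta^*(u)\in M_r$ by definition. No step poses a real obstacle: the compactness and subadditivity arguments have already been absorbed into Lemma~\ref{lem:in-a-small-sphere-Dirichlet}, so the remaining work is only the two-term split and a bookkeeping of constants, exactly as in the Neumann analogue.
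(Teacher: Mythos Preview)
Your proposal is correct and follows exactly the approach the paper intends: the paper's own proof simply reads ``Cf.~Lemma~\ref{lem:beta*-near-partialM}'', and you have faithfully transcribed that argument with $\partial M$ replaced by $M$ and Lemma~\ref{lem:in-a-small-semisphere} replaced by Lemma~\ref{lem:in-a-small-sphere-Dirichlet}. The only implicit step worth making explicit is that $\int_M|u|\,\de v_g\ge\big|\int_M u\,\de v_g\big|=m$, which is what turns the mass bound $\int_{M\setminus B}|u|\le\alpha m$ into the density bound $\int_{M\setminus B}\rho(u,\cdot)\,\de v_g\le\alpha$ used in the second term.
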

\begin{proof}
	Cf. Lemma~\ref{lem:beta*-near-partialM}.
\end{proof}
\begin{remark}
	\label{rem:def-r1-Dirichlet}
	By the previous lemma and the compactness of $M$,
	there exists $r_1 > 0$
	such that for every $r \in ]0,r_1[$,
	setting $m_2 = m_2(M,g,r,\mathrm{diam}_{\mathbb{R}^{\tilde{n}}}(M))$
	and $\varepsilon_2 = \varepsilon_2(M,g,r,m)$ as in the lemma,
	the map 
	$\mathcal{B}= \pi_M\circ\beta^* \colon
	\overline{\mathcal{E}}_{\varepsilon,m}^{c}\to M$
	is well defined and continuous,
	with $c = \sigma \overline{I}_M(m) + \tau(m)$.
\end{remark}

\subsection{Conclusion of the proof}
In this section we show that if $m$ and $\varepsilon$
are sufficiently small then
$\mathcal{B} \circ \mathcal{P}_{\varepsilon,m} \colon M \to M$
is homotopic to the identity map.
Then, Theorem~\ref{theorem:ACH-Dirichet}
follows as an application of Theorem~\ref{theorem:photography}.

Recalling the definition of $\delta_M > 0$
given in Section~\ref{sec:photo-Dirichlet}
for the purpose of defining the photography map
$\mathcal{P}_{\varepsilon, m}\colon M \to \mathcal{H}_{m,0}$,
let us also define the map
$\widetilde{\mathcal{P}}_{\varepsilon,m}\colon M^{\delta_M/2} \to \mathcal{H}_{m,0}$
as follows:
\begin{equation}
	\label{eq:def-tilde-photo-Dirichlet}
	\widetilde{\mathcal{P}}_{\varepsilon,m}(p)
 = u_{\varepsilon,p}^m,
\end{equation}
where 
$M^{\delta_M/2} = \{p \in M: \dist_g(p,\partial M)\ge \delta_M/2\}$.
With this notation, $\mathcal{P}_{\varepsilon,m}$
is given by $\widetilde{\mathcal{P}}_{\varepsilon,m}\circ\mathcal{L}$.

\begin{lemma}
	\label{lem:barycenter+photography-Dirichlet}
	Assume that \ref{asu_nondegenerate} and \ref{asu_coercive} hold. For every $r \in ]0,\delta_M/2[$
	there exists $m_3 = m_3(M,g,r) > 0$
	such that for every $m \in (0,m_3)$
	there exists $\varepsilon_3 = \varepsilon_3(M,g,r,m) > 0$
	such that
	\begin{equation}
		\label{eq:bound-on-dist-beta-photo-Dirichlet}
		\norm{\beta^*(\widetilde{\mathcal{P}}_{\varepsilon,m}(p)) - p}_{\mathbb{R}^{\tilde{n}}}
		\le r,
		\qquad \forall p \in M.
	\end{equation}
\end{lemma}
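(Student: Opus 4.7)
The plan is to mimic the proof of Lemma~\ref{lem:barycenter+photography}, substituting the geodesic ball $B(p,r_{p,m})$ for the pseudo half-bubble $E_{p,m}$ and using points $p$ in the domain $M^{\delta_M/2}$ of $\widetilde{\mathcal{P}}_{\varepsilon,m}$. The proof rests on two separate estimates combined by the triangle inequality: the barycenter of the indicator function $u_{0,p}^m = \mathbf{1}_{B(p,r_{p,m})}$ is close to $p$ because the supporting ball is small, and the $\varepsilon$-approximation $u_{\varepsilon,p}^m = \widetilde{\mathcal{P}}_{\varepsilon,m}(p)$ is close enough in $L^1(M,\mathbb{R})$ to $u_{0,p}^m$ that Lemma~\ref{lem:beta*-continuity} transfers the bound.

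For the first estimate, I would exploit the smoothness of the volume form and the compactness of $M^{\delta_M/2}$ to infer that $r_{p,m} \to 0$ uniformly in $p$ as $m \to 0^+$, and then pick $m_3 = m_3(M,g,r)$ so small that $r_{p,m} < r/2$ for every $m \in (0,m_3)$ and $p \in M^{\delta_M/2}$. Since $M$ is isometrically embedded in $\mathbb{R}^{\tilde{n}}$, the extrinsic Euclidean distance is dominated by the intrinsic Riemannian distance, so every $x \in B(p,r_{p,m})$ satisfies $\norm{x-p}_{\mathbb{R}^{\tilde{n}}} \le \dist_g(x,p) < r/2$, whence
\[
\norm*{\beta^*(u_{0,p}^m) - p}_{\mathbb{R}^{\tilde{n}}}
= \norm*{\frac{1}{m}\int_{B(p,r_{p,m})}(x-p)\, \de v_g}_{\mathbb{R}^{\tilde{n}}}
\le \frac{r}{2}.
\]

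For the second estimate, I would invoke the Modica-type construction of $u_{\varepsilon,p}^m$ recalled in Remark~\ref{rem:epsilon-approximation}, which provides $L^1$-convergence $u_{\varepsilon,p}^m \to u_{0,p}^m$ as $\varepsilon \to 0$. Combined with Lemma~\ref{lem:beta*-continuity} (continuity of $\beta^*$) and the compactness of $M^{\delta_M/2}$, this produces $\varepsilon_3 = \varepsilon_3(M,g,r,m) > 0$ such that for every $\varepsilon \in (0,\varepsilon_3)$ and $p \in M^{\delta_M/2}$,
\[
\norm*{\beta^*(u_{\varepsilon,p}^m) - \beta^*(u_{0,p}^m)}_{\mathbb{R}^{\tilde{n}}} \le \frac{r}{2}.
\]
Combining these two bounds via the triangle inequality then yields~\eqref{eq:bound-on-dist-beta-photo-Dirichlet}.

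The only mildly delicate point is the uniformity in $p$ of the convergence in the second estimate; this is the same subtlety already addressed in Proposition~\ref{prop:continuity-photo-Dirichlet}. It is overcome in the same manner: the $\varepsilon$-approximation depends continuously on the defining set $B(p,r_{p,m})$ through the signed distance function and the correction constant $\delta_{\varepsilon,B(p,r_{p,m})}$ produced by the Implicit Function Theorem, so a standard compactness argument on $M^{\delta_M/2}$ provides the required uniformity exactly as in the Neumann counterpart.
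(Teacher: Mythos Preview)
Your proposal is correct and follows essentially the same approach as the paper, whose proof of this lemma consists solely of the cross-reference ``Cf.\ Lemma~\ref{lem:barycenter+photography}.'' You have correctly unpacked that cross-reference: replace the pseudo half-bubble $E_{p,m}$ by the geodesic ball $B(p,r_{p,m})$, work on the compact set $M^{\delta_M/2}$ (the actual domain of $\widetilde{\mathcal{P}}_{\varepsilon,m}$, despite the ``$\forall p \in M$'' in the displayed statement), and run the same two-step triangle-inequality argument.
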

\begin{proof}
	Cf. Lemma~\ref{lem:barycenter+photography}.
\end{proof}

\begin{lemma}
	\label{lem:homotopy-barycenter+photography-Dirichlet}
	Assume that \ref{asu_nondegenerate} and \ref{asu_coercive} hold,
    let $\mathcal{P}_{\varepsilon,m}\colon M \to \mathcal{H}_{m,0}$
	be given by Definition~\ref{def:photography-Dirichlet}
	   and let $\mathcal{B}\colon H^1(M,\mathbb{R}) \to M$
	be defined as $\pi_M \circ \beta^*$.
	There exists $m^* = m^*(M,g,W) > 0$ such that 
	for any $m \in (0, m^*)$ there exists
	$\varepsilon^* = \varepsilon^*(M,g,W,m) > 0$ such that
	for any $\varepsilon \in (0, \varepsilon^*)$
	the composition map 
	\[
		\mathcal{B} \circ \mathcal{P}_{\varepsilon,m}
		\colon M \to M
	\]
	is well defined and homotopic to the identity map.
\end{lemma}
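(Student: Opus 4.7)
The plan is to follow the proof of Lemma~\ref{lem:homotopy-barycenter+photography} in the Neumann case, with two adjustments: work with the exponential map of $M$ rather than of $\partial M$, and account for the boundary layer map $\mathcal{L}$ by combining the resulting homotopy with the canonical homotopy between $\mathcal{L}$ and $\mathrm{id}_M$.

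First I would fix parameters. Let $\mathrm{inj}(M)>0$ be the injectivity radius of $M$ (positive by compactness, where at the boundary one can use a standard extension of $M$ to a slightly larger open manifold without boundary) and let $C_M>0$ be a constant such that $\dist_g(p,q)\le C_M\norm{p-q}_{\mathbb{R}^{\tilde n}}$ for all $p,q\in M$ within a uniform scale (existence follows from compactness and smoothness of the embedding). Let $r_1>0$ be as in Remark~\ref{rem:def-r1-Dirichlet} and set
\[
	r^*=\min\!\Big\{r_1,\,\tfrac{\delta_M}{2},\,\tfrac{\mathrm{inj}(M)}{4C_M}\Big\}.
\]
Then take $m_2,\varepsilon_2$ from Lemma~\ref{lem:beta*-near-M-Dirichlet} with $r=r^*$, $m_3,\varepsilon_3$ from Lemma~\ref{lem:barycenter+photography-Dirichlet} with $r=r^*$, and $m_1,\varepsilon_1$ from Proposition~\ref{prop:photo-sublevel-Dirichlet}. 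Define $m^*=\min\{m_1,m_2,m_3\}$ and, for $m\in(0,m^*)$, $\varepsilon^*=\min\{\varepsilon_1,\varepsilon_2,\varepsilon_3\}$.

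Next I would show that $\mathcal{B}(\mathcal{P}_{\varepsilon,m}(p))$ is close to $\mathcal{L}(p)$ in $M$. For $p\in M$, Proposition~\ref{prop:photo-sublevel-Dirichlet} places $\mathcal{P}_{\varepsilon,m}(p)=\widetilde{\mathcal{P}}_{\varepsilon,m}(\mathcal{L}(p))$ in the sublevel $\overline{\mathcal{E}}_{\varepsilon,m}^{\,\sigma\overline{I}_M(m)+\tau(m)}$, so by Lemma~\ref{lem:beta*-near-M-Dirichlet} the point $\beta^*(\mathcal{P}_{\varepsilon,m}(p))$ lies in $M_{r^*}$ and
\[
	\norm{\pi_M(\beta^*(\mathcal{P}_{\varepsilon,m}(p)))-\beta^*(\mathcal{P}_{\varepsilon,m}(p))}_{\mathbb{R}^{\tilde n}}\le r^*.
\]
Applying Lemma~\ref{lem:barycenter+photography-Dirichlet} at the point $\mathcal{L}(p)\in M^{\delta_M/2}$ (noting that $\mathcal{P}_{\varepsilon,m}(p)=\widetilde{\mathcal{P}}_{\varepsilon,m}(\mathcal{L}(p))$) yields $\norm{\beta^*(\mathcal{P}_{\varepsilon,m}(p))-\mathcal{L}(p)}_{\mathbb{R}^{\tilde n}}\le r^*$. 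The triangle inequality and the Lipschitz comparison between $\dist_g$ and the Euclidean distance then give
\[
	\dist_g\!\big(\mathcal{B}(\mathcal{P}_{\varepsilon,m}(p)),\mathcal{L}(p)\big)\le 2C_M r^*\le \tfrac{1}{2}\mathrm{inj}(M),
\]
uniformly in $p\in M$.

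With this bound secured, the geodesic segment joining $\mathcal{L}(p)$ to $\mathcal{B}(\mathcal{P}_{\varepsilon,m}(p))$ is unique, and the map
\[
	F_1(t,p)\coloneqq \exp_{\mathcal{L}(p)}\!\Big(t\,\exp_{\mathcal{L}(p)}^{-1}\!\big(\mathcal{B}(\mathcal{P}_{\varepsilon,m}(p))\big)\Big),\qquad t\in[0,1],
\]
is a well-defined continuous homotopy from $\mathcal{L}$ to $\mathcal{B}\circ\mathcal{P}_{\varepsilon,m}$. Finally, since $\mathcal{L}$ is $C^1$ and fixes points away from a neighborhood of $\partial M$, the straight-line interpolation in the normal coordinates of Section~\ref{sec:photo-Dirichlet} (between $h(t_p)$ and $t_p$, and leaving $Q_p$ fixed) yields a continuous homotopy $F_0$ from $\mathrm{id}_M$ to $\mathcal{L}$. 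Concatenating $F_0$ and $F_1$ produces the desired homotopy between $\mathrm{id}_M$ and $\mathcal{B}\circ\mathcal{P}_{\varepsilon,m}$, completing the argument. The only genuinely new step compared to the Neumann case is controlling the boundary-layer displacement, which is harmless because $\mathcal{L}$ is homotopic to the identity by construction.
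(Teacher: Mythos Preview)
Your proposal is correct and follows essentially the same approach as the paper: bound $\dist_g\big(\mathcal{B}(\mathcal{P}_{\varepsilon,m}(p)),\mathcal{L}(p)\big)$ via the triangle inequality using Lemmas~\ref{lem:beta*-near-M-Dirichlet} and~\ref{lem:barycenter+photography-Dirichlet}, build a geodesic homotopy from $\mathcal{B}\circ\mathcal{P}_{\varepsilon,m}$ to $\mathcal{L}$, and concatenate with the homotopy $\mathcal{L}\simeq\mathrm{id}_M$. The paper phrases the reduction slightly differently (it works with $\widetilde{\mathcal{P}}_{\varepsilon,m}$ on $M^{\delta_M/2}$ and uses the injectivity radius $\mathrm{inj}(M^{\delta_M/2})$ of the inner collar, which automatically keeps the geodesic inside $M$), whereas you invoke an extension of $M$; to make your version airtight you should also include a factor like $\delta_M/(4C_M)$ in $r^*$ so that the geodesic from $\mathcal{L}(p)\in M^{\delta_M/2}$ does not exit $M$, but this is a cosmetic adjustment.
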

\begin{proof}
	The proof is very similar to the one of Lemma~\ref{lem:homotopy-barycenter+photography},
	but it has to take into account the action of the boundary layer map $\mathcal{L}$.
	However, 
	since $\mathcal{L}$ is homotopic to the identity map, 
	it suffices to show that 
	$\mathcal{B}\circ\widetilde{\mathcal{P}}_{\varepsilon,m}\colon M^{\delta_M/2} \to M$
	is homotopic to the identity map as well,
	provided that $m$ and $\varepsilon$ are sufficiently small.

	By the compactness of $M$, there exists a positive constant
	$C_M$ such that 
	$\dist_g(p,q)\le C_M\norm{p-q}_{\mathbb{R}^{\tilde{n}}}$,
	for every $p,q \in M$.
	We denote by
	$\mathrm{inj}( M_{\delta_M/2})$
	the \emph{injectivity radius} of $M_{\delta_M/2}$ in $M$, that is
	\begin{equation*}
		\mathrm{inj}(M^{\delta_M/2})\coloneqq
		\inf_{p \in M^{\delta_M/2}}\sup\Big\{R>0
			\text{ s.t. }
			\mathrm{exp}^{M}_{p}\colon B^{M}_{g}(p,{R})
		\to M \mbox{ is a diffeomorphism}\Big\}.
	\end{equation*}
	Note that by this construction we have
	$\mathrm{inj}( M^{\delta_M/2}) \le \delta_M/2$.

	Recalling the definition of $r_1$ given in 
	Remark~\ref{rem:def-r1-Dirichlet}, we set
	\[
		r^* = \frac{1}{2}\min\left\{\frac{\delta_M}{C_M},\mathrm{inj}(M_{\delta_M/2}),r_1 \right\}
		\le \frac{\delta_M}{4}.
	\]
	Let 
        $m_1 = m_1(M,g,W,\tau)$,
        $m_2 = m_2(M,g,r^*,\mathrm{diam}_{\mathbb{R}^{\tilde{n}}}(M))$ and
	$m_3 = m_3(M,g,r^*)$
	be the positive constants given by 
 Proposition~\ref{prop:photo-sublevel-Dirichlet}, Lemma~\ref{lem:beta*-near-M-Dirichlet}
	and Lemma~\ref{lem:barycenter+photography-Dirichlet}, respectively.
	Let $m^* = \min\{m_1,m_2,m_3\} > 0$
	and for every $m \in (0,m^*)$ 
	let $\varepsilon^* = \min\{\varepsilon_1,\varepsilon_2,\varepsilon_3\}$,
	where $\varepsilon_i$ is defined by using the          same results
	of $m_i$, for $i = 1,2,3$.
	Since $m < m_1$,
	for every $p \in M_{\delta_M/2}$ we have
	that $\widetilde{\mathcal{P}}_{\varepsilon,m}(p) \in \overline{\mathcal{E}}_{\varepsilon,m}^c$,
	with $c = \sigma \overline{I}_M(m) + \tau(m)$.
	Since  $m < m_3$,
	this implies that
	%	both
	%	$\beta^*(\widetilde{\mathcal{P}}_{\varepsilon,m}(p)) \in M_{r^*}$
	%	and 
	$\norm{\beta^*(\widetilde{\mathcal{P}}_{\varepsilon,m}(p)) - p}_{\mathbb{R}^{\tilde{n}}}\le r^*$
	for every $p \in M_{\delta_M/2}$.
	Therefore, we have
	\begin{multline}
		\label{eq:homotopy-barycenter+photography-Dirichlet-proof1}
		\mathrm{dist}_g\big(\mathcal{B}(\widetilde{\mathcal{P}}_{\varepsilon,m}(p)),p\big)
		\le C_{ M}\norm{\pi_{ M}
			(\beta^*
		(\widetilde{\mathcal{P}}_{\varepsilon,m}(p)))-p}_{\mathbb{R}^{\tilde{n}}}\\
		\le C_{M}\big(
			\norm{\pi_{M}(\beta^*(\widetilde{\mathcal{P}}_{\varepsilon,m}(p))) -
			\beta^*(\widetilde{\mathcal{P}}_{\varepsilon,m}(p))}_{\mathbb{R}^{\tilde{n}}}
			+ 
			\norm{\beta^*(\widetilde{\mathcal{P}}_{\varepsilon,m}(p))
			- p}_{\mathbb{R}^{\tilde{n}}}
		\big)\\
		\le 2 C_M r^* \le \frac{\delta_M}{2},
	\end{multline}
	so the map
	$F\colon [0,1] \times M_{\delta_M/2} \to M$,
	given by
	\begin{equation*}
		F(t,p)\coloneqq
		\mathrm{exp}^{M}_p\Big(t
			\big(\mathrm{exp}^{M}_p\big)^{-1}
		\big(\mathcal{B}(\widetilde{\mathcal{P}}_{\varepsilon,m}(p))\big)\Big),
	\end{equation*}
	is well-defined,
	continuous and gives a homotopy equivalence between
	$\mathcal{B}\circ \widetilde{\mathcal{P}}_{\varepsilon,m}$ and
	the identity map in $M^{\delta_M/2}$.
\end{proof}

\begin{proof}[Proof of Theorem~\ref{theorem:ACH-Dirichet}]
	As previously stated, 
	this proof is based on Theorem~\ref{theorem:photography}.
	Indeed, for every $m,\varepsilon > 0$ the functional 
	$\overline{\mathcal{E}}_{\varepsilon,m}$ is bounded below and
	it satisfies the Palais-Smale condition
	(see Lemma~\ref{lem:E_eps_C2-PS}).
	Moreover, let $m^*> 0$ be given by 
	Lemma~\ref{lem:homotopy-barycenter+photography-Dirichlet}
	and for every $m \in (0,m^*)$
	let $\varepsilon^* > 0$ be given by the same lemma.
	Setting $c =\sigma \overline{I}_M(m) + \tau(m)$,
	where $\tau\colon \mathbb{R}^+ \to \mathbb{R}$ is defined
	in Proposition~\ref{prop:photo-sublevel-Dirichlet},
	we establish that the photography map
	$\mathcal{P}_{\varepsilon,m}\colon M \to \overline{\mathcal{E}}_{\varepsilon,m}^c$
	(as defined in Definition~\ref{def:photography-Dirichlet})
	and the barycenter map $\mathcal{B}\colon \overline{\mathcal{E}}_{\varepsilon,m}^c \to M$
	(defined as $\pi_M \circ \beta^*$) are well-defined,
 for every $\varepsilon \in (0,\varepsilon^*)$.
	Furthermore, according to Lemma~\ref{lem:homotopy-barycenter+photography-Dirichlet},
	their composition $\mathcal{B}\circ \mathcal{P}_{\varepsilon,m}$
	is homotopic to the identity map.
	Then, all the conclusions of Theorem~\ref{theorem:ACH-Dirichet}
	can be derived by applying Theorem~\ref{theorem:photography}.
\end{proof}

\section{Generic non-degeneracy}
\label{sect_generic}
The conclusions of Theorems \ref{theorem:ACH-Neumann} and \ref{theorem:ACH-Dirichet} are stronger if one can ensure that for given $m$ and $\varepsilon$ all the solutions of \eqref{eq:PDE_Neumann} and \eqref{eq:PDE_Dirichlet} are nondegenerate. In many situations, nondegeneracy of solutions is a generic property, in the sense that it is obtained after some arbitrary small perturbation of a key parameter of the problem. This is the case for semilinear elliptic equations as the ones considered in this paper, meaning that the stronger existence statements of Theorem \ref{theorem:ACH-Neumann} and \ref{theorem:ACH-Dirichet}, based on Morse inequalities, hold in generic situations. The purpose of this section is to give some insight on how these genericity properties are obtained from the available literature, which contains analogous results in similar settings. 

\subsection{Abstract transversality result}
Generic nondegeneracy is usually obtained by applying Sard-Smale Theorem \cite{smale65} and abstract transversality results on infinite-dimensional Banach manfiolds, see Quinn \cite{quinn70}, Saut and Temam \cite{saut-temam79} and Uhlenbeck \cite{uhlenbeck76}. Following \cite{saut-temam79}, we now introduce some objects that will be fixed for the rest of this section and recall some standard definitions. Let $X$, $Y$ and $Z$ be Banach spaces and $U \subset X$, $V \subset Y$ open subsets. Let $F:U \times V \to Z$ be of class $C^k$ for $k \geq 1$. We will denote by $DF(x_0,y_0)$ the (total) differential of $F$ at the point $(x_0,y_0) \in U \times V$ and by $D_XF(x_0,y_0)$, $D_YF(x_0,y_0)$ the differentials with respect to the $x$ and the $y$ components, respectively. Given $Z'$ a Banach space, $U' \subset Z'$ an open subset and $g: Z' \to Z$ of class $C^1$, we say that $z_0 \in Z$ is a regular value of $g$ if $Dg(z_0')$ (the differential of $g$ at $z_0'$)  is onto for any $z_0' \in U'$ such that $g(z_0')=z_0$. The abstract result we use is the following:
\begin{theorem}[Theorem 1.1 in \cite{saut-temam79}]\label{theorem_transversality}
Assume that $z_0 \in Z$ is a regular value of $F$ and that, moreover:
\begin{enumerate}
\item For any $x \in U$ and $y \in V$, $D_XF(x,y): X \to Z$ is a Fredholm map of index $l<k$.
\item The set of $x \in U$ such that $F(x,y)=z_0$ for all $y$ in a compact subset of $V$ is a relatively compact set of $U$.
\end{enumerate}
Then, the set
\begin{equation*}
\mathcal{O}\coloneqq \{ y \in V: z_0 \mbox{ is a regular value of } F(\cdot,y),
\end{equation*}
is a dense open subset of $V$.
\end{theorem}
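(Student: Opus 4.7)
My plan is to apply the Sard--Smale theorem to a suitable projection from the zero set of $F(\cdot,\cdot) = z_0$ onto the parameter space $V$. Since $z_0$ is a regular value of $F$, the implicit function theorem for Banach manifolds ensures that $\mathcal{M} \coloneqq F^{-1}(z_0) \subset U \times V$ is a $C^k$ Banach submanifold whose tangent space at any $(x,y) \in \mathcal{M}$ is $T_{(x,y)}\mathcal{M} = \ker DF(x,y)$. I then consider the projection $\pi \colon \mathcal{M} \to V$, $(x,y) \mapsto y$, whose differential on this tangent space acts by $(\xi,\eta) \mapsto \eta$.

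Next, I would show that $\mathcal{O}$ coincides with the set of regular values of $\pi$ and that $\pi$ is a $C^k$ Fredholm map of index $l$. A short linear-algebra argument gives $\ker D\pi(x,y) \cong \ker D_X F(x,y)$ via $(\xi,0) \mapsto \xi$. Using that $DF(x,y)$ is surjective (regularity of $z_0$), the map $\eta \mapsto D_Y F(x,y)\eta$ descends to an isomorphism $Y/\operatorname{range} D\pi(x,y) \cong Z/\operatorname{range} D_X F(x,y) = \operatorname{coker} D_X F(x,y)$. Hence $D\pi(x_0,y_0)$ is surjective if and only if $D_X F(x_0,y_0)$ is surjective, so $y_0 \in \mathcal{O}$ if and only if $y_0$ is a regular value of $\pi$; and $\pi$ is Fredholm of the same index $l$.

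Density of $\mathcal{O}$ in $V$ is then the Sard--Smale theorem applied to $\pi$, valid because $l < k$. To prove openness, I would exploit hypothesis (2). Let $y_n \to y_0$ with each $y_n \notin \mathcal{O}$, and choose $x_n \in U$ with $F(x_n, y_n) = z_0$ and $D_X F(x_n, y_n)$ not surjective. Since $K \coloneqq \{y_n : n \in \mathbb{N}\} \cup \{y_0\}$ is compact, hypothesis (2) yields a subsequence with $x_{n_k} \to x_0$ in $U$. Continuity of $F$ gives $F(x_0, y_0) = z_0$, and since among Fredholm operators of fixed index the cokernel dimension is upper semicontinuous with respect to the operator norm, $D_X F(x_0,y_0)$ remains non-surjective, so $y_0 \notin \mathcal{O}$ and $V \setminus \mathcal{O}$ is closed.

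The key technical ingredients are the identification $\operatorname{coker} D\pi \cong \operatorname{coker} D_X F$ (which crucially uses surjectivity of $DF$ on $\mathcal{M}$) and the closedness step in the openness argument. I expect the latter to be the main obstacle, since it requires combining the properness-type statement supplied by hypothesis (2) with the upper semicontinuity of the cokernel dimension on components of the space of Fredholm operators of a fixed index; both are standard in transversality theory, but their conjunction is what promotes the generic (residual) conclusion of Sard--Smale to a \emph{dense open} statement.
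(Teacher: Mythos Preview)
The paper does not supply its own proof of this statement; it is quoted verbatim as Theorem~1.1 of Saut and Temam and used as a black box. Your argument---realising $\mathcal{M}=F^{-1}(z_0)$ as a $C^k$ Banach manifold, projecting onto $V$, identifying $\ker D\pi$ and $\operatorname{coker} D\pi$ with $\ker D_XF$ and $\operatorname{coker} D_XF$, applying Sard--Smale for density, and using hypothesis~(2) together with the upper semicontinuity of the cokernel dimension for openness---is precisely the classical proof given in the cited reference, and is correct as sketched.
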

\subsection{Genericity with respect to the Riemannian metric}
Theorem \ref{theorem_transversality} was used in \cite{andrade-conrado-nardulli-piccione-resende} (cf. also \cite{micheletti-pistoia09}) in order to prove generic non-degeneracy for solutions of \eqref{eq:Cahn-Hilliard-PDE} (i.e., critical points of the $C^2$ functionals $\CE_{\eps,m}$ and $\overline{\CE}_{\eps,m}$) with respect to the Riemannian metric. The analogous result holds in our setting: 
\begin{theorem}\label{theorem_genericity_metric}
Let $\mathrm{Met}^\infty(M)$ be the space of $C^\infty$ Riemannian metrics on $M$ and $m \in \R$. For $\hat{g} \in \mathrm{Met}^\infty(M)$ and $\eps \in (0,+\infty)$, we denote by $\CE_{\eps,m,\hat{g}}$ and $\overline{\CE}_{\eps,m,\hat{g}}$ the functionals $\CE_{\eps,m}$ and $\overline{\CE}_{\eps,m}$ with respect to the metric $\hat{g}$. Then, for any $g_0 \in \mathrm{Met}^\infty(M)$ the sets
\begin{align*}
\left\{
\begin{aligned}
(\eps,\hat{g}) \in (0,+\infty) & \times \mathrm{Met}^\infty(M): 
\mbox{ any critical point }\\
& \mbox{ of } \CE_{\eps,m,\hat{g}} \mbox{ in } H^1_{g_0}(M,\R) \mbox{ is non-degenerate}
\end{aligned} 
\right\}
\end{align*}
and
\begin{align*}
\left\{
\begin{aligned}
(\eps,\hat{g}) \in (0,+\infty) & \times \mathrm{Met}^\infty(M): 
\mbox{ any critical point }\\
& \mbox{ of } \overline{\CE_{\eps,m,\hat{g}}} \mbox{ in } H^1_{g_0}(M,\R) \mbox{ is non-degenerate}
\end{aligned} 
\right\}
\end{align*}
are open dense subsets of $(0,+\infty) \times \mathrm{Met}^\infty(M)$.
\end{theorem}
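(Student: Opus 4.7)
The strategy is to apply the abstract transversality Theorem~\ref{theorem_transversality} with $y = (\eps, \hat g)$ as parameter, following the scheme of~\cite{andrade-conrado-nardulli-piccione-resende,micheletti-pistoia09}. Fix a large integer $k\ge 2$ and work first with the Banach manifold $\mathrm{Met}^k(M)$ of $C^k$ Riemannian metrics; the upgrade to $\mathrm{Met}^\infty(M)$ is performed at the end via a standard approximation argument. In the Neumann case take $X = \{u \in H^k_{g_0}(M,\R) : \int_M u\,\de v_{g_0} = m\}$; in the Dirichlet case replace $H^k_{g_0}(M,\R)$ by its intersection with $H^1_{0,g_0}(M,\R)$. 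Let $Z$ denote the closed subspace of $L^2_{g_0}(M,\R)$ of functions with zero mean, and define
\begin{equation*}
    F(u,\eps,\hat g) \coloneqq -\eps\Delta_{\hat g}u + \frac{1}{\eps}W'(u) - \lambda(u,\eps,\hat g),
\end{equation*}
with $\lambda(u,\eps,\hat g)\in\R$ chosen so that the right-hand side lies in $Z$. Then $F(u,\eps,\hat g)=0$ characterises critical points of $\CE_{\eps,m,\hat g}$ (resp.\ $\overline{\CE}_{\eps,m,\hat g}$), and non-degeneracy of such a critical point amounts to invertibility of $D_X F$. Standard elliptic theory implies that $D_X F$ is a compact perturbation of an invertible elliptic operator and is therefore Fredholm of index $0$, providing hypothesis~(1) of Theorem~\ref{theorem_transversality}; hypothesis~(2) follows from the Palais-Smale condition of Lemma~\ref{lem:E_eps_C2-PS} combined with elliptic bootstrap from $H^1_{g_0}$ to $H^k_{g_0}$ using the subcritical growth assumption~\ref{asu_subcritical}.

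The crux of the proof, and the main obstacle, is to verify that $0\in Z$ is a regular value of $F$, i.e.\ that $DF(u_0,\eps_0,\hat g_0)$ is surjective at every zero of $F$. Since $D_X F$ has finite-dimensional cokernel, it suffices to use metric variations to cover it. Suppose for contradiction that some non-zero $\psi \in Z$ is orthogonal to $\mathrm{Range}(DF(u_0,\eps_0,\hat g_0))$. Orthogonality to $\mathrm{Range}(D_X F)$ forces $\psi$ to satisfy the formal adjoint equation
\begin{equation*}
    -\eps_0 \Delta_{\hat g_0}\psi + \frac{1}{\eps_0}W''(u_0)\psi \equiv \mathrm{const} \quad \text{on } M,
\end{equation*}
with the relevant boundary condition (Neumann or Dirichlet). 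Orthogonality to $\mathrm{Range}(D_{\hat g}F)$, combined with the classical variation formulae for $\partial_t|_{t=0}\Delta_{\hat g + th}u_0$ and for $\de v_{\hat g + th}$, translates into
\begin{equation*}
    \int_M \langle h, T(u_0,\psi)\rangle_{\hat g_0}\, \de v_{\hat g_0} = 0
\end{equation*}
for every symmetric $(0,2)$-tensor $h$ of class $C^k$, where $T(u_0,\psi)$ is a symmetric tensor built from $\nabla u_0 \otimes \nabla \psi$, its symmetrisation, and trace-type correction terms. Density of test tensors forces $T(u_0,\psi)\equiv 0$ pointwise, which in turn forces $\psi\equiv 0$ on the open set $\{\nabla u_0\neq 0\}$. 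Classical Aronszajn unique continuation applied to the adjoint elliptic equation then propagates this vanishing to all of $M$, producing the desired contradiction.

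Theorem~\ref{theorem_transversality} therefore yields, for every $k\ge 2$, an open dense subset of $(0,+\infty)\times\mathrm{Met}^k(M)$ of parameters along which all critical points of the restricted energy are non-degenerate. A standard diagonal argument, based on the continuous dense embedding of $\mathrm{Met}^\infty(M)$ into $\mathrm{Met}^k(M)$ for every $k$, transfers both density and openness to $\mathrm{Met}^\infty(M)$ equipped with its Fréchet topology, finishing the proof.
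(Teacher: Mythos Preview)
Your proposal follows essentially the same route as the paper, which simply invokes Theorem~\ref{theorem_transversality} and refers to \cite{andrade-conrado-nardulli-piccione-resende,micheletti-pistoia09} for details; your sketch is in fact considerably more explicit than what the paper itself provides.

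There is, however, one genuine gap in your surjectivity step. Your argument concludes $\psi\equiv 0$ by first deducing $T(u_0,\psi)\equiv 0$, then using that $T(u_0,\psi)$ is built from $\nabla u_0\otimes\nabla\psi$ to force $\psi=0$ on $\{\nabla u_0\neq 0\}$, and finally invoking unique continuation. This breaks down when $u_0$ is \emph{constant}, since then $\nabla u_0\equiv 0$ and the tensor $T(u_0,\psi)$ vanishes identically regardless of $\psi$, so no information on $\psi$ is obtained from metric variations. In the Dirichlet case this is harmless (a constant function in $\CH_{m,0}$ with $m\neq 0$ does not exist), but in the Neumann case the constant $u_0 = m/\mathrm{vol}_{\hat g}(M)$ is always a critical point, and you must treat it separately. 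The standard fix, as in \cite{micheletti-pistoia09}, is to observe that at a constant $u_0$ the partial differential $D_XF$ reduces to $-\eps\Delta_{\hat g}+\tfrac{1}{\eps}W''(u_0)$ on zero-mean functions, and then to use the variation in $\eps$ (or the variation of $\mathrm{vol}_{\hat g}(M)$ through conformal metric perturbations, which moves $u_0$ itself) to cover the finite-dimensional cokernel. You should insert this case distinction explicitly.
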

The proof of Theorem \ref{theorem_genericity_metric} is obtained as an application of the abstract Theorem \ref{theorem_transversality}.
The interested reader can find more details in \cite{andrade-conrado-nardulli-piccione-resende,micheletti-pistoia09}. 

\subsection{Genericity with respect to the domain}
Notice that Theorem \ref{theorem_genericity_metric} has a drawback: In some applications, one might not want to modify the metric of the problem. For instance, if $M$ is an Euclidean domain $\Omega \subset \R^n$ then it would not make much sense to replace the Euclidean metric on $\Omega$ by another one.
It is therefore reasonable to be intereseted in generic nondegeneracy with respect to a different key parameter. It might be tempting to consider a genericity result based on modifying the potential instead of the Riemannian metric.
However, Brunovsk\'{y} and Pol\'{a}\v{c}ik~\cite{brunovsky-polacik} produced a counterexample for semilinear elliptic equations on the Euclidean ball with homogeneous Dirichlet boundary conditions. Our setting is slightly different than theirs (in particular, more restrictive) meaning that one could still hope to obtain genericity results with respect to the potential. However, the investigation of this question goes beyond the scope of this paper. Instead, we will present a genericity result based on perturbation of the domains, obtained by adapting a result due to Saut and Temam \cite{saut-temam79}, see also the book by Henry \cite{henry05}. This can be somehow understood as a Euclidean counterpart of Theorem \ref{theorem_genericity_metric}.

Following \cite{henry05}, let us give the precise statement of the genericity result. Fix $\Omega \subset \R^n$ a bounded and smooth domain. For $l \in \{1,\ldots,+\infty\}$, let $\mathrm{Diff}^l(\Omega)$ be the set of maps in $\CC^l(\Omega,\R^n)$ such that $h\colon\Omega \to h(\Omega)$ is a $C^l$-diffeomorphism. In particular, $\mathrm{Id} \in \mathrm{Diff}^l(\Omega)$. For $l<+\infty$, let us endow $\CC^l(\Omega,\R^n)$ with the usual sup norm, so it is a Banach space. This induces the usual Whitney topology on $\CC^\infty(\Omega,\R^n)$. Then, with these choices of topologies, $\mathrm{Diff}^l(\Omega)$ is open in $\CC^l(\Omega,\R^n)$. For $h \in \mathrm{Diff}^l(\Omega)$ and $\eps>0$, let $\CE_{\eps,h}$ denote the energy functional with parameter $\eps$ in the domain $h(\Omega)$. For $m \in \R$, define $\CE_{\eps,m,h}$ and $\overline{\CE}_{\eps,m,h}$ as the restriction of $\CE_{\eps,h}$ to $\CH_m$ and $\CH_{m,0}$ respectively. The main result reads as follows:
\begin{theorem}\label{THEOREM_genericity_domain}
The sets
\begin{equation*}
\big\{ (\eps,h) \in (0,+\infty) \times \mathrm{Diff}^{\infty}(\Omega): \mbox{any critical point of } \CE_{\eps,m,h} \mbox{ is nondegenerate}\big\}
\end{equation*}
and
\begin{equation*}
\big\{ (\eps,h) \in (0,+\infty) \times \mathrm{Diff}^{\infty}(\Omega): \mbox{any critical point of } \overline{\CE}_{\eps,m,h} \mbox{ is nondegenerate}\big\}
\end{equation*}
are dense in $(0,+\infty) \times \mathrm{Diff}^\infty(\Omega)$.
\end{theorem}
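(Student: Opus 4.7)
The plan is to pull back the problem from the varying domain $h(\Omega)$ to the fixed domain $\Omega$ via $h$, converting the domain perturbation into a perturbation of coefficients in a PDE on $\Omega$. Writing $v = u \circ h$, the pulled-back energy takes the form
\[
\tilde{\CE}_{\eps,h}(v) = \int_\Omega \left( \frac{\eps}{2} \langle (Dh)^{-1}\nabla v, (Dh)^{-1}\nabla v\rangle + \frac{W(v)}{\eps} \right) \lvert\det Dh\rvert \, dx,
\]
with mass constraint $\int_\Omega v\, \lvert\det Dh\rvert\, dx = m$. All coefficients depend smoothly on $h$ in $\mathrm{Diff}^l(\Omega)$, and critical points on $h(\Omega)$ are in bijective correspondence with critical points on $\Omega$, with nondegeneracy preserved. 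This places us in the framework of Theorem \ref{theorem_transversality}, with ``extrinsic'' parameter $y = (\eps, h) \in V = (0,+\infty) \times \mathrm{Diff}^l(\Omega)$.

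Setting $g_h = h^{*} g_{\mathrm{eucl}}$, the Euler--Lagrange system with Lagrange multiplier reads
\[
F(v, \lambda, \eps, h) = \Big( -\eps\, \Delta_{g_h} v + \tfrac{1}{\eps} W'(v) - \lambda,\ \int_\Omega v\, dv_{g_h} - m \Big),
\]
viewed as a map from $H^1(\Omega)\times\R \times V$ into $H^{-1}(\Omega) \times \R$ in the Neumann case, and analogously with $v \in H^1_0(\Omega)$ in the Dirichlet case. Assumption \ref{asu_subcritical} and standard elliptic regularity ensure that $F$ is of class $C^k$ for any $k < l$, that the partial derivative $D_{(v,\lambda)} F(v_0,\lambda_0,\eps_0,h_0)$ is Fredholm of index zero (it is a compact perturbation of an invertible constrained elliptic operator), and condition (2) of Theorem \ref{theorem_transversality} follows from a priori elliptic bounds combined with the Palais--Smale type compactness already established in Lemma \ref{lem:E_eps_C2-PS}.

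The main step is to verify that $0$ is a regular value of $F$. Since $D_{(v,\lambda)} F$ has finite-dimensional cokernel, it is enough to show that no nontrivial $\phi$ in this cokernel can annihilate $D_h F(v_0,\lambda_0,\eps_0,h_0)$. The shape derivative of $\tilde{\CE}_{\eps,h}$ in a direction $\dot h \in \CC^l(\Omega,\R^n)$ produces an explicit symmetric $2$-tensor perturbation of $g_h$, and pairing with $\phi$ yields an integral expression bilinear in $\nabla v_0$ and $\nabla \phi$, linear in $\dot h$, plus boundary terms (in the Dirichlet case) and a contribution from the mass constraint. If this pairing vanished for every $\dot h$ supported in a small open set, one deduces a pointwise identity for a polynomial expression in $(v_0,\nabla v_0, \phi, \nabla \phi)$; combined with the fact that $\phi$ itself solves the linearized elliptic equation $(-\eps \Delta_{g_h} + \tfrac{1}{\eps} W''(v_0))\phi = \mathrm{const}$, a standard unique continuation argument (e.g.\ Aronszajn's theorem, after exploiting smoothness of the coefficients) forces $\phi \equiv 0$, a contradiction. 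This shape-derivative-plus-unique-continuation step is the main technical obstacle; it is the domain-perturbation analogue of the argument in \cite{saut-temam79, henry05}, adapted to the mass-constrained Allen--Cahn setting, and the Dirichlet case requires additional care with the boundary traces produced by the deformation.

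Applying Theorem \ref{theorem_transversality} at each finite regularity $l$ yields density of the nondegeneracy set in $(0,+\infty) \times \mathrm{Diff}^l(\Omega)$. A standard approximation argument (using that $\CC^\infty$ is dense in $\CC^l$ and that nondegeneracy of a given critical point is an open condition in the Whitney topology) then upgrades density to $\mathrm{Diff}^\infty(\Omega)$, yielding Theorem \ref{THEOREM_genericity_domain}.
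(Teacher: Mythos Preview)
Your proposal is correct and follows essentially the same approach as the paper, which itself only gives a very brief sketch: pull back to the fixed domain, apply the abstract transversality Theorem~\ref{theorem_transversality} at each finite regularity $l$ (combining the Saut--Temam/Henry domain-perturbation ideas with the mass-constrained adaptation from \cite{andrade-conrado-nardulli-piccione-resende}), and then pass to $l=\infty$. The only minor difference is in this last step: the paper invokes the Baire category theorem to intersect the open dense sets over $l\in\N^*$, whereas you use a direct approximation argument ($\CC^\infty$ dense in $\CC^l$ plus openness of nondegeneracy); both are standard and lead to the stated density in $(0,+\infty)\times\mathrm{Diff}^\infty(\Omega)$.
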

The proof of Theorem \ref{THEOREM_genericity_domain} is based on the transversality Theorem \ref{theorem_transversality}, and follows by combining the ideas in \cite{andrade-conrado-nardulli-piccione-resende} with those in \cite{saut-temam79,henry05}. This result is applied to $\mathrm{Diff}^l(\Omega,\R^n)$, which is an open subset of the Banach space $\CC^l(\Omega,\R^n)$. As a consequence, one obtains that the sets
\begin{equation*}
\{ (\eps,h) \in (0,+\infty) \times \mathrm{Diff}^{l}(\Omega): \mbox{Any critical point of } \CE_{\eps,m,h} \mbox{ is nondegenerate}\}
\end{equation*}
and
\begin{equation*}
\{ (\eps,h) \in (0,+\infty) \times \mathrm{Diff}^{l}(\Omega): \mbox{Any critical point of } \overline{\CE}_{\eps,m,h} \mbox{ is nondegenerate}\}
\end{equation*}
are open and dense in $(0,+\infty) \times \mathrm{Diff}^\infty(\Omega)$. Theorem \ref{THEOREM_genericity_domain} then follows by Baire category Theorem, by taking the intersections $\cap_{l \in \N^*}$ of the sets above. We skip the details.

\section{A partial result without the subcritical growth assumption}\label{sect_subcritical}
The sole purpose of the subcritical growth assumption \ref{asu_subcritical} is to ensure that the variational problems under consideration are compact (more precisely, that the functionals $\CE_{\eps,m}$ and $\overline{\CE}_{\eps,m}$ satisfy the Palais-Smale condition). However, as it was already observed in \cite{benci-nardulli-osorio-piccione}, it is possible to drop \ref{asu_subcritical} and obtain weaker versions of Theorems \ref{theorem:ACH-Neumann} and \ref{theorem:ACH-Dirichet}. More precisely, one has:
\begin{theorem}\label{theorem:ACH-Neumann-weak}
Assume that \ref{asu_nondegenerate} and \ref{asu_coercive} hold. Then, there exists $m^*>0$ such that for all $m \in (0,m^*)$ there exist $\eps_{m},c_{m}>0$ such that for any $\eps \in (0,\eps_m)$ the Neumann problem \eqref{eq:PDE_Neumann} has at least $\mathrm{cat}(\partial M)$ solutions $u_{\eps,m}$ with $\CE_{\eps}(u_{\eps,m}) \leq c_{m}$. Moreover, if $\eps$ and $m$ as above are such that all critical points of $\CE_{\eps,m}$ are non-degenerate, then \eqref{eq:PDE_Neumann} has at least $\mathscr{P}_1(\partial M)$ solutions $u_{\eps,m}$ with $\CE_{\eps}(u_{\eps,m}) \leq c_{m}$.
\end{theorem}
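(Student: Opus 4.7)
The plan is to bypass the lack of the Palais-Smale condition for $\CE_{\varepsilon,m}$ by truncating the potential, applying Theorem \ref{theorem:ACH-Neumann} to the resulting subcritical problem, and then recovering the original problem through a priori $L^\infty$ bounds on the low-energy critical points produced.

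First, I would construct a modified potential $\widetilde{W} \in C^3_{\loc}(\R,[0,+\infty))$ satisfying \ref{asu_nondegenerate}, \ref{asu_coercive} \emph{and} \ref{asu_subcritical}, chosen so that $\widetilde{W}(u) = W(u)$ for $u \in [-R_0,R_0]$ for some fixed $R_0 > 1$, and so that $\widetilde{W}$ has (say) quadratic growth outside a larger interval, keeping $\widetilde{W}^{-1}(0)=\{0,1\}$. Since the infimum defining $\sigma$ in \eqref{eq:def-sigma} is realized by paths taking values in $[0,1]$ (any excursion outside $[0,1]$ strictly increases the integrand because $W\ge 0$ and $W(0)=W(1)=0$), and since the $\Gamma$-convergence results depend only on the values of the potential in a compact range containing $\{0,1\}$, the constant $\sigma$ and the limit functional $\CE_0$ associated with $\widetilde{W}$ coincide with those associated with $W$. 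Consequently, the photography/barycenter construction of Sections \ref{sect_photography}--\ref{sect_Neumann_completed} carries over \emph{verbatim} to the truncated functional $\widetilde{\CE}_{\varepsilon,m}$, and Theorem \ref{theorem:ACH-Neumann} yields, for $m \in (0,m^*)$ and $\varepsilon \in (0,\varepsilon_m)$, at least $\cat(\partial M)$ critical points $\widetilde{u}_{\varepsilon,m}$ of $\widetilde{\CE}_{\varepsilon,m}$ satisfying $\widetilde{\CE}_{\varepsilon}(\widetilde{u}_{\varepsilon,m})\le c_m = \sigma I_M(m)+\theta m$, and at least $\mathscr{P}_1(\partial M)$ such critical points in the non-degenerate case.

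The main step is to prove that, provided $R_0$ has been chosen sufficiently large, any such critical point obeys the pointwise estimate $\norm{\widetilde{u}_{\varepsilon,m}}_{L^\infty(M)} \le R_0$. This would give $\widetilde{W}'(\widetilde{u}_{\varepsilon,m}) = W'(\widetilde{u}_{\varepsilon,m})$ pointwise and would identify $\widetilde{u}_{\varepsilon,m}$ as a solution of the original Neumann problem \eqref{eq:PDE_Neumann} with the same energy level. The route proceeds by first controlling the Lagrange multiplier: integrating the equation against the constant $1$ and using the Neumann boundary condition yields
\[
\widetilde{\lambda}_{\varepsilon,m}\,\vol(M) = \frac{1}{\varepsilon}\int_M \widetilde{W}'(\widetilde{u}_{\varepsilon,m})\,\de v_g,
\]
and testing the equation against $\widetilde{u}_{\varepsilon,m}$, combined with the energy bound $\widetilde{\CE}_{\varepsilon}(\widetilde{u}_{\varepsilon,m})\le c_m$, the mass constraint $\int_M \widetilde{u}_{\varepsilon,m}=m$, and the coercivity \ref{asu_coercive}, produces an $L^2$ estimate for $\widetilde{u}_{\varepsilon,m}$ and hence a bound for $|\widetilde{\lambda}_{\varepsilon,m}|$. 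Once this is in hand, a Moser iteration carried out on the equation satisfied by $(\widetilde{u}_{\varepsilon,m}-R)_+$ for $R \ge R_0$, still exploiting the coercivity of $\widetilde{W}'$ outside $[-R_0,R_0]$, supplies the desired uniform pointwise bound.

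The principal technical obstacle is to keep the singular factor $1/\varepsilon$ in front of $\widetilde{W}'$ under control throughout these estimates, and to ensure that the resulting $L^\infty$ bound can be taken independent of $\varepsilon$ small, so that $R_0$ may be fixed \emph{a priori} uniformly across all critical points in the sublevel $\widetilde{\CE}_{\varepsilon,m}^{c_m}$. This is where the vanishing of both $m$ and $c_m$ as $m\to 0^+$, together with the precise form of the coercivity, is crucial. Finally, the result is weaker than Theorem \ref{theorem:ACH-Neumann} because the extra solution with energy above $c_m$ produced there arises from a global deformation/min-max argument that genuinely requires the Palais-Smale condition on the full functional; although this argument applies to $\widetilde{\CE}_{\varepsilon,m}$ and furnishes such a critical point of the truncated problem, there is no \emph{a priori} energy control on it comparable to the estimate above $c_m$, and consequently no natural way to enforce the $L^\infty$ bound that would make the truncation irrelevant for it.
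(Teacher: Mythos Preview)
Your overall strategy coincides with the paper's: truncate $W$ outside a large interval to a potential $\hat W$ satisfying \ref{asu_subcritical}, apply Theorem~\ref{theorem:ACH-Neumann} to $\hat W$, and then show that the low-energy critical points so obtained satisfy a uniform $L^\infty$ bound independent of $\eps$, so that the truncation is invisible. Your explanation of why the high-energy solution is lost is also exactly the right one.

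The only substantive difference is in how the $L^\infty$ bound is obtained. The paper does not use Moser iteration; instead it invokes a lemma of Chen (Lemma~\ref{LEMMA_chen_manifold}) giving the $\eps$-uniform Lagrange-multiplier bound $|\lambda_\eps|\le C_{\mathrm{est}}\,\CE_{\eps,\hat W}(u_\eps)$, and then applies the elementary maximum principle: at an interior maximum $x_{\max}$ with $u_\eps(x_{\max})>u^*$ one has $-\eps\Delta u_\eps(x_{\max})=\lambda_\eps-\tfrac{1}{\eps}\hat W'(u_\eps(x_{\max}))<0$, a contradiction. Your testing route for $\lambda$ yields instead a bound of the form $|\lambda|\le C/\eps$ (not $\eps$-free), but note that this is still sufficient, since in the comparison one only needs $\tfrac{1}{\eps}\hat W'(u^*)>|\lambda|$, i.e.\ $\hat W'(u^*)>C$, which holds for $u^*$ large by coercivity. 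In particular, once you have any such bound on $\lambda$, the maximum principle closes the argument in one line and Moser iteration is unnecessary. Either way the proof goes through; the paper's route is simply shorter.
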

\begin{theorem}\label{theorem:ACH-Dirichlet-weak}
Assume that \ref{asu_nondegenerate} and \ref{asu_coercive} hold. Then, there exists $m^*>0$ such that for all $m \in (0,m^*)$ there exist $\eps_{m},c_{m}>0$ such that for any $\eps \in (0,\eps_m)$ the Dirichlet problem \eqref{eq:PDE_Dirichlet} has at least $\mathrm{cat}(M)$ solutions $u_{\eps,m}$ with $\CE_{\eps}(u_{\eps,m}) \leq c_{m}$. Moreover, if $\eps$ and $m$ as above are such that all critical points of $\overline{\CE}_{\eps,m}$ are non-degenerate, then \eqref{eq:PDE_Dirichlet} has at least $\mathscr{P}_1(M)$ solutions $u_{\eps,m}$ with $\CE_{\eps}(u_{\eps,m}) \leq c_{m}$.
\end{theorem}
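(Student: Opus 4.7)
The plan is to deduce Theorem \ref{theorem:ACH-Dirichlet-weak} from Theorem \ref{theorem:ACH-Dirichet} via a truncation of the potential. For each $R \geq 2$, construct $W_R \in \CC^3_\loc(\R,[0,+\infty))$ coinciding with $W$ on $[-R,R]$, still satisfying \ref{asu_nondegenerate} and \ref{asu_coercive} with the same constants $\alpha, R_0$ as $W$ (we may assume $R_0 < R$), and satisfying in addition \ref{asu_subcritical} (for instance, smoothly extending so that $W_R(u)\sim |u|^p$ for $|u|$ large with $p < 2^*$, while keeping $W_R'(u)u \ge \alpha u^2$ on $\{|u|\ge R_0\}$). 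Let $\overline{\CE}_{\eps,m,R}$ denote the corresponding restricted functional on $\CH_{m,0}$; by Theorem \ref{theorem:ACH-Dirichet} it admits, in an appropriate parameter regime, at least $\cat(M)$ critical points below some sublevel (and $\mathscr{P}_1(M)$ in the non-degenerate case).

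The first task is to show that the thresholds $m^*$, $\eps_m$ and $c_m$ supplied by Theorem \ref{theorem:ACH-Dirichet} can be chosen independently of $R$. Inspecting Section \ref{sec:dirichlet}: the photography map $\CP_{\eps,m}$ is constructed via Modica's recovery sequence (Remark \ref{rem:epsilon-approximation}), which takes values in $[0,1]$ and is therefore unaffected by the modification of $W$ outside $[-R,R]$; the sublevel constant $c_m = \sigma \overline{I}_M(m)+\tau(m)$ depends on $W$ only through $\sigma$, which in turn only uses the values of $W$ on $[0,1]$; the concentration statement of Proposition \ref{prop:almost-isoperimetric-in-a-small-ball-Dirichlet} and the barycenter estimates depend solely on the $\Gamma$-limit $\overline{\CE}_0$. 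Uniformly in $R \geq 2$ we therefore obtain at least $\cat(M)$ critical points $u_{\eps,m,R} \in \CH_{m,0}$ of $\overline{\CE}_{\eps,m,R}$ with $\overline{\CE}_{\eps,R}(u_{\eps,m,R}) \leq c_m$.

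The crux of the argument, and its main obstacle, is a uniform-in-$R$ $L^\infty$ bound on these critical points. The energy bound combined with the coercivity of $W_R$ yields a uniform bound on $\|u_{\eps,m,R}\|_{L^2}$ and on $\int_M W_R(u_{\eps,m,R})\,\de v_g$. The Lagrange multiplier $\lambda_{\eps,m,R}$ can be estimated independently of $R$ by pairing the Euler--Lagrange equation with a fixed test function $\varphi \in C^\infty_c(\mathrm{int}(M))$ of integral $1$ (so that only the $H^1$- and $L^\infty$-norms of $\varphi$ appear, together with $\|u_{\eps,m,R}\|_{H^1}$ and $\int_{\supp\varphi}|W_R'(u_{\eps,m,R})|$, controlled via Sobolev embedding and coercivity). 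A De Giorgi iteration using the test functions $(|u|-K)^+\mathrm{sgn}(u) \in H^1_0(M)$ for $K \geq R_0$ and the inequality $W_R'(u)\mathrm{sgn}(u) \geq \alpha |u|$ on $\{|u|\geq R_0\}$ (with $\alpha$ uniform in $R$) then yields $\|u_{\eps,m,R}\|_{L^\infty} \leq C_\infty(\eps,m,c_m)$ independent of $R$. This is precisely where \ref{asu_coercive} substitutes for \ref{asu_subcritical}; the delicate point is keeping the iteration constants uniform in $R$.

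To conclude, choose $R > \max\{C_\infty(\eps,m,c_m), R_0\}$. Then $W_R \equiv W$ on the range of every $u_{\eps,m,R}$, so these functions solve the original Dirichlet problem \eqref{eq:PDE_Dirichlet}. Moreover, the same $L^\infty$ bound, applied to \emph{every} critical point of $\overline{\CE}_{\eps,m,R}$ below level $c_m$, shows that such critical points coincide with critical points of $\overline{\CE}_{\eps,m}$; hence the non-degeneracy assumption on $\overline{\CE}_{\eps,m}$ transfers to $\overline{\CE}_{\eps,m,R}$ at the relevant points, and the Morse conclusion of Theorem \ref{theorem:ACH-Dirichet} yields the $\mathscr{P}_1(M)$ lower bound. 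Note that, unlike in Theorem \ref{theorem:ACH-Dirichet}, the extra critical point above level $c_m$ is not obtained, since its existence in the proof invokes Palais--Smale at levels $>c_m$ for the original functional, a property that the present truncation scheme does not directly restore.
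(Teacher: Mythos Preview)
Your overall strategy---truncate $W$ outside a large interval to restore \ref{asu_subcritical}, apply Theorem~\ref{theorem:ACH-Dirichet}, and then use an $L^\infty$ bound to see that low-energy solutions for the truncated potential solve the original problem---is exactly the paper's. The paper states that the proof of Theorem~\ref{theorem:ACH-Dirichlet-weak} is identical to that of Theorem~\ref{theorem:ACH-Neumann-weak}, which implements this scheme via Proposition~\ref{PROPOSITION_ub} and Lemma~\ref{LEMMA_chen_manifold}.

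There is, however, a genuine gap in your execution. Your Lagrange multiplier estimate pairs the equation with a fixed $\varphi\in C^\infty_c(M)$ and then claims that $\int_{\supp\varphi}|W_R'(u_{\eps,m,R})|$ is controlled ``via Sobolev embedding and coercivity''. But coercivity \ref{asu_coercive} gives only the lower bound $W_R'(u)\,\mathrm{sgn}(u)\geq \alpha|u|$, not an upper bound on $|W_R'|$; and an upper bound of the form $|W_R'(u)|\leq C(1+|u|^{p-1})$ would require the subcritical growth constant of $W_R$, which necessarily blows up as $R\to\infty$ since $W_R=W$ on $[-R,R]$ and $W$ itself is not assumed subcritical. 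So your bound on $\lambda_{\eps,m,R}$ is not uniform in $R$, and hence neither is the De~Giorgi $L^\infty$ bound that depends on it. The paper avoids this by invoking Chen's estimate (Lemma~\ref{LEMMA_chen_manifold}), which yields $|\lambda_\eps|\leq C_{\mathrm{est}}\,\CE_{\eps}(u_\eps)$ with a constant depending only on the manifold, and then uses the maximum principle (Proposition~\ref{PROPOSITION_ub}) rather than De~Giorgi iteration: at an interior maximum $x_{\max}$ (automatic for Dirichlet data) one has $-\Delta u_\eps(x_{\max})\geq 0$, forcing $W'(u_\eps(x_{\max}))\leq \eps\lambda_\eps$, which together with \ref{asu_coercive} gives the $L^\infty$ bound.

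A second, more structural point: because the $L^\infty$ bound from Proposition~\ref{PROPOSITION_ub} depends only on the \ref{asu_coercive} constants (which are preserved under truncation), the paper can compute it \emph{before} choosing the truncation. This allows the use of a \emph{single} modified potential $\hat W$ with $\hat W=W$ on $[-u^*,u^*]$, so that the thresholds $m^*,\eps_m,c_m$ from Theorem~\ref{theorem:ACH-Dirichet} may freely depend on $\hat W$; no uniformity-in-$R$ argument is needed. Your inspection of Section~\ref{sec:dirichlet} to argue that the thresholds are $R$-independent is therefore unnecessary (and somewhat delicate, since e.g.\ the compactness step in the barycenter construction uses $W$-dependent constants from Theorem~\ref{theorem:recovery-sequence}). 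Your final remarks on the transfer of non-degeneracy and on why the extra high-energy solution is lost are correct.
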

The proofs of Theorems \ref{theorem:ACH-Neumann-weak} and \ref{theorem:ACH-Dirichlet-weak} work essentially like that of \cite[Theorem 5.9]{benci-nardulli-osorio-piccione}.  However, the assumptions on the potential that we take in this paper are slightly different than those in \cite{benci-nardulli-osorio-piccione}. Therefore, we include the proofs of Theorems \ref{theorem:ACH-Neumann-weak} and \ref{theorem:ACH-Dirichlet-weak} for completeness. The crucial ingredient is that for small $\eps$ one can obtain a priori bounds on the $L^\infty$ norm of any solution $(u_\eps,\lambda_\eps)$ of the equation
\begin{equation}
\label{equation_ub}
-\eps \Delta u_{\eps}+\frac{1}{\eps}\hat{W}'(u_\eps)=\lambda_\eps \mbox{ on } M.
\end{equation}
according to its energy
(see Proposition \ref{PROPOSITION_ub} below).
Such ingredient allows to modify potentials close to infinity to turn them into potentials with subcritical growth at infinity to which one may apply Theorems \ref{theorem:ACH-Neumann} and \ref{theorem:ACH-Dirichet}. The a priori bounds imply that the low energy solutions of the modified potential are also solutions of the original potential. However, no such thing can be said at this point regarding high energy solutions, hence the weaker statements of Theorems \ref{theorem:ACH-Neumann-weak} and \ref{theorem:ACH-Dirichlet-weak}. 
\begin{proposition}\label{PROPOSITION_ub}
Let $\hat{W} \in \CC^3_{\mathrm{loc}}(\R,[0,+\infty])$ be a double-well potential vanishing exactly on $\{0,1\}$ and satisfying \ref{asu_nondegenerate} and \ref{asu_coercive}.
There exists $\eps_{ub}>0$ such that for any $\eps \in (0,\eps_{\mathrm{ub}}]$ and 
any $(u_{\eps},\lambda_{\eps})$ solution of~\eqref{equation_ub}
such that $\CE_{\eps,\hat{W}}(u_{\eps})<+\infty$
we have
\begin{equation*}
\lVert u_\eps \rVert_{L^\infty(M,\R)}\leq C_{\mathrm{ub}}(\eps_{ub},\CE_{\eps,\hat{W}}(u_{\eps})),
\end{equation*}
with $C_{\mathrm{ub}}(\eps_{ub},\CE_{\eps,\hat{W}}(u_{\eps}))>0$ a constant depending on the quantities $\eps_{ub}$ and $\CE_{\eps,\hat{W}}(u_{\eps})$ but independent on $\eps$.
\end{proposition}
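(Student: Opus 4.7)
The plan is to prove this bound in two steps: reduce the $L^\infty$ bound to a bound on $\eps|\lambda_\eps|$ via a maximum-principle argument, and then control $\eps|\lambda_\eps|$ via the energy. The coercivity \ref{asu_coercive} will play a dual role, producing both the pointwise inequality for the maximum principle and (together with non-degeneracy \ref{asu_nondegenerate}) the concentration property that controls the Lagrange multiplier.

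For the first step, let $M_\eps = \max_M u_\eps$, attained at some $x_0 \in M$ by elliptic regularity and compactness. In the Dirichlet case we may assume $x_0$ is interior, since otherwise $M_\eps = 0$; in the Neumann case, a boundary maximum can be reduced to an interior one by even reflection of $u_\eps$ through $\partial M$ in a tubular neighborhood (legitimate thanks to $\partial_\nu u_\eps = 0$, which makes the reflected equation hold). Since $\Delta u_\eps(x_0) \leq 0$, equation~\eqref{equation_ub} gives $\hat{W}'(M_\eps) \leq \eps \lambda_\eps$. If $M_\eps \geq R$, assumption \ref{asu_coercive} (applied at $u = M_\eps$) yields $\hat{W}'(M_\eps) \geq \alpha M_\eps$, so $M_\eps \leq \eps \lambda_\eps/\alpha$. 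The symmetric argument at the minimum gives
\[
\|u_\eps\|_{L^\infty(M,\R)} \leq \max\left(R, \frac{\eps|\lambda_\eps|}{\alpha}\right),
\]
reducing the problem to bounding $\eps|\lambda_\eps|$ in terms of $E := \CE_{\eps,\hat{W}}(u_\eps)$.

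For the second step, the energy estimate $\int_M W(u_\eps)\, dv_g \leq E\eps$ combined with \ref{asu_nondegenerate} (giving $W(u) \geq c_i(u-i)^2$ near each well $i \in \{0,1\}$) and \ref{asu_coercive} (giving $W(u) \geq \alpha u^2/2 - C$ for $|u| \geq R$ after integrating the coercivity inequality) implies $L^2$-concentration of $u_\eps$ near $\{0,1\}$, together with Lebesgue measure bounds of order $O(\eps)$ for both the ``middle'' region between the wells and the tail $\{|u_\eps| \geq R'\}$ for $R'$ slightly larger than $R$. Near each well, $\hat{W}'(i) = 0$ and smoothness give $|\hat{W}'(u)| \leq C|u - i|$, so H\"older yields $\int_{\text{near wells}} |\hat{W}'(u_\eps)|\, dv_g \lesssim \sqrt{\eps}$; in the middle region, $|\hat{W}'|$ is pointwise bounded and the measure is $O(\eps)$. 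Integrating equation~\eqref{equation_ub} against the constant $1$ in the Neumann case (or a fixed interior test function in the Dirichlet case) produces an identity $\eps\lambda_\eps\, \mathrm{vol}(M) = \int_M \hat{W}'(u_\eps)\, dv_g + O(1)$.

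The main obstacle is controlling $\int \hat{W}'(u_\eps)$ on the tail $\{|u_\eps| \geq R'\}$, where \ref{asu_coercive} provides only a lower bound on $|\hat{W}'|$, not an upper bound, so the absence of subcritical growth prevents a direct pointwise estimate. The idea is to close a bootstrap using Step 1: the preliminary (possibly $\eps$-dependent) inequality $\|u_\eps\|_\infty \leq \eps|\lambda_\eps|/\alpha$ bounds $|\hat{W}'(u_\eps)|$ pointwise on the tail, while the energy bound forces the measure of the top level set $\{u_\eps > K\}$ to satisfy $|\{u_\eps > K\}| \leq CE\eps / K^2$ for $K \geq R'$ large. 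Testing the equation with $(u_\eps - K)_+$ for $K \in (R, \eps|\lambda_\eps|/\alpha)$ and using coercivity on the nonlinear term, $\hat{W}'(u_\eps)(u_\eps - K) \geq \alpha K (u_\eps - K)$ on $\{u_\eps > K\}$, yields
\[
\eps \int_M |\nabla (u_\eps - K)_+|^2\, dv_g + \frac{\alpha K}{\eps}\int_M (u_\eps - K)_+\, dv_g \leq \lambda_\eps \int_M (u_\eps - K)_+\, dv_g,
\]
which combined with the measure estimate and a De Giorgi-type iteration in $K$ closes the bootstrap once $\eps \leq \eps_{\mathrm{ub}}$ is taken sufficiently small, giving $\eps|\lambda_\eps| \leq C(E)$ uniformly in $\eps \in (0, \eps_{\mathrm{ub}}]$. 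Together with Step 1 this yields the claimed bound $\|u_\eps\|_{L^\infty(M,\R)} \leq C_{\mathrm{ub}}(\eps_{\mathrm{ub}}, E)$.
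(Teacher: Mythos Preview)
Your Step 1 (maximum principle plus coercivity to reduce the $L^\infty$ bound to a bound on $\eps|\lambda_\eps|$) is correct and is exactly the argument the paper uses, although the paper applies it after already having the multiplier bound in hand.

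The gap is in Step 2. Your displayed inequality
\[
\eps \int_M |\nabla (u_\eps - K)_+|^2\, dv_g + \frac{\alpha K}{\eps}\int_M (u_\eps - K)_+\, dv_g \leq \lambda_\eps \int_M (u_\eps - K)_+\, dv_g
\]
says only that either $\int_M(u_\eps-K)_+=0$ or $\alpha K/\eps \leq \lambda_\eps$; in other words, $\lVert u_\eps\rVert_\infty \leq \eps\lambda_\eps/\alpha$. This is precisely Step 1 again and provides no independent control on $\lambda_\eps$. The ``De Giorgi iteration'' and ``bootstrap'' you invoke are not made precise, and I do not see how they can close: the measure estimate $|\{u_\eps>K\}|\lesssim E\eps/K^2$ together with the gradient term would feed a De Giorgi scheme for $u_\eps$, not for $\lambda_\eps$, and any resulting $L^\infty$ bound would still carry the factor $\lambda_\eps$ from the right-hand side. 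Your attempt to bound $\int_M \hat W'(u_\eps)$ on the tail runs into a genuine obstruction: under \ref{asu_coercive} alone (e.g.\ $\hat W(u)\sim e^{u^2}$ for large $u$), the ratio $|\hat W'(u)|/\hat W(u)$ can be unbounded, so the energy bound $\int_M \hat W(u_\eps)\leq E\eps$ does not control $\int_M|\hat W'(u_\eps)|$ without already knowing $\lVert u_\eps\rVert_\infty$.

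The paper avoids this circularity by invoking, as a black box, a lemma of Chen (Lemma~\ref{LEMMA_chen_manifold}) which gives the stronger estimate $|\lambda_\eps|\leq C_{\mathrm{est}}\,\CE_{\eps,\hat W}(u_\eps)$ directly, via elliptic estimates and a mollification argument that does not pass through integrating $\hat W'(u_\eps)$ over the whole of $M$. Once that bound is available, the maximum-principle step you wrote as Step 1 finishes the proof in one line.
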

The proof of Proposition \ref{PROPOSITION_ub} relies on the following result.
\begin{lemma}\label{LEMMA_chen_manifold}
Let $\hat{W} \in \CC^3_{\mathrm{loc}}(\R,[0,+\infty])$ be a double-well potential vanishing exactly on $\{0,1\}$ and satisfying \ref{asu_nondegenerate} and \ref{asu_coercive}. There exist $\eps_{\mathrm{est}}>0$ and $C_{\mathrm{est}}>0$ such that for any $\eps \in (0,\eps_{\mathrm{est}})$, $m \in [0,1]$ and $(u_{\eps},\lambda_{\eps})$ a solution of \eqref{equation_ub} we have
\begin{equation*}
\lvert \lambda_{\eps} \rvert \leq C_{\mathrm{est}}\CE_{\eps,\hat{W}}(u_{\eps}).
\end{equation*}
\end{lemma}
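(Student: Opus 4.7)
The plan is to integrate the equation \eqref{equation_ub} against well-chosen test functions in order to express $\lambda_\eps$ in terms of integrals that can be bounded by the energy $\CE_{\eps,\hat{W}}(u_\eps)$. In the Neumann setting, testing against the constant function $1$ yields the identity
\[
\lambda_\eps\,\vol(M) = \frac{1}{\eps}\int_M \hat{W}'(u_\eps)\,\de v_g,
\]
since the boundary term vanishes. For the Dirichlet problem we instead use a smooth cutoff $\phi_\eta\in C^\infty(M)$ equal to $1$ outside a $2\eta$-neighbourhood of $\partial M$ and vanishing near $\partial M$; choosing $\eta$ of order $\eps$ and applying Cauchy--Schwarz to the additional term $\eps\int\nabla u_\eps\cdot\nabla\phi_\eta$ (controlled by $\eps\int|\nabla u_\eps|^2 \leq 2\CE_{\eps,\hat{W}}(u_\eps)$) yields an analogous identity up to a correction already bounded by $C\CE_{\eps,\hat{W}}(u_\eps)$.

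Next, I would bound $\frac{1}{\eps}\int_M |\hat{W}'(u_\eps)|\,\de v_g$ by splitting $M$ according to the value of $u_\eps$. On the bounded intermediate region where $u_\eps$ stays away from the wells and $|u_\eps|\leq R$, one has $\hat{W}(u_\eps)\geq \delta>0$, so the measure of this region is at most $\eps\CE_{\eps,\hat{W}}(u_\eps)/\delta$; combined with the uniform bound on $\hat{W}'$ there, this contributes $O(\CE_{\eps,\hat{W}}(u_\eps))$ after dividing by $\eps$. On the far region $\{|u_\eps|\geq R\}$, the coercivity assumption \ref{asu_coercive} implies $\hat{W}(u_\eps)\geq \alpha u_\eps^2/2 - O(1)$, so $L^2$-control of $u_\eps$ on this region follows from the energy bound, and combining this with the sign information $\mathrm{sgn}(\hat{W}'(u_\eps))=\mathrm{sgn}(u_\eps)$ from coercivity gives the required contribution.

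The delicate piece is the one supported on neighbourhoods of the wells: the naive pointwise bound $|\hat{W}'(u)|\leq C\sqrt{\hat{W}(u)}$ coming from \ref{asu_nondegenerate} only produces, after integration and Cauchy--Schwarz, an estimate of order $\sqrt{\CE_{\eps,\hat{W}}(u_\eps)/\eps}$, which diverges as $\eps\to 0^+$. To close the argument I would test the equation against $\chi(u_\eps)$, where $\chi$ is a smooth cutoff with $\chi(0)=\chi(1)=1$ supported near the wells, integrate the $\Delta u_\eps$ term by parts to produce a gradient term bounded by $\eps\int|\chi'(u_\eps)||\nabla u_\eps|^2 \leq C\CE_{\eps,\hat{W}}(u_\eps)$, and use that $\int\chi(u_\eps)\,\de v_g$ is bounded below by a fixed fraction of $\vol(M)$ (because the energy forces $u_\eps$ to be concentrated near $\{0,1\}$ outside a set of measure $O(\eps\CE_{\eps,\hat{W}}(u_\eps))$). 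The main obstacle is closing the resulting inequality, which has the form
\[
|\lambda_\eps|\,c\,\vol(M) \leq C\CE_{\eps,\hat{W}}(u_\eps) + o(1)|\lambda_\eps|,
\]
with the $o(1)$ correction stemming from the regions where $\chi\neq 1$; absorbing this term into the left-hand side requires $\eps$ to be smaller than a threshold $\eps_{\mathrm{est}}$, which is exactly the smallness hypothesis appearing in the statement. The uniformity in $m\in[0,1]$ is automatic since $m$ does not enter the estimate.
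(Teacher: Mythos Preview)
The paper does not prove this lemma: it refers to Chen's original argument and its Riemannian extension, describing the proof only as ``standard elliptic estimates with mollification arguments''. Your approach is therefore different in spirit---you work entirely with test functions of the form $\chi(u_\eps)$, while Chen's argument tests against a mollification (at spatial scale $\eps$) of a suitable function built from $u_\eps$.

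There is a genuine gap in your scheme, precisely at the ``delicate piece'' you flag. When you test the equation against $\chi(u_\eps)$ with $\chi(0)=\chi(1)=1$, the resulting identity is
\[
\lambda_\eps\int_M\chi(u_\eps)\,\de v_g
=\eps\int_M\chi'(u_\eps)\lvert\nabla u_\eps\rvert^2\,\de v_g
+\frac{1}{\eps}\int_M\chi(u_\eps)\hat W'(u_\eps)\,\de v_g,
\]
and your final inequality silently absorbs the last term into $C\,\CE_{\eps,\hat W}(u_\eps)$. But that bound is false: since $\chi\equiv 1$ near the wells, on the set where $u_\eps$ is close to $0$ one has $\lvert\chi(u_\eps)\hat W'(u_\eps)\rvert\sim \hat W''(0)\lvert u_\eps\rvert$, whereas $\hat W(u_\eps)\sim \tfrac12\hat W''(0)\,u_\eps^2$. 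The ratio $\lvert\hat W'\rvert/\hat W\sim 2/\lvert u_\eps\rvert$ diverges, so $\tfrac{1}{\eps}\int\lvert\chi\hat W'(u_\eps)\rvert$ cannot be controlled by $\tfrac{1}{\eps}\int\hat W(u_\eps)$. Concretely, the constant solution $u_\eps\equiv c$ with $c$ small already gives $\tfrac{1}{\eps}\int\hat W'(u_\eps)\big/\CE_{\eps,\hat W}(u_\eps)\sim 2/(c\,\vol(M))$, which is unbounded as $c\to 0$. The two requirements you impose on $\chi$ are in fact incompatible: forcing $\chi(0)=\chi(1)=1$ so that $\int\chi(u_\eps)$ stays of order $\vol(M)$ is exactly what prevents $\lvert\chi\hat W'\rvert\le C\hat W$ near the wells.

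The mollification route avoids this obstruction because the test function is not of the form $g(u_\eps)$. One first uses the Modica--Mortola inequality to obtain the $BV$ bound $\int_M\sqrt{2\hat W(u_\eps)}\,\lvert\nabla u_\eps\rvert\le\CE_{\eps,\hat W}(u_\eps)$; via the coarea formula this controls the perimeters of the level sets $\{u_\eps>t\}$. One then tests \eqref{equation_ub} against a spatial $\eps$-mollification $\phi_\eps$ of (a suitable truncation of) $u_\eps$ or of a level-set indicator. The gradient term $\eps\int\nabla u_\eps\cdot\nabla\phi_\eps$ is then controlled because $\lVert\nabla\phi_\eps\rVert_{L^1}$ is bounded by the $BV$ norm (hence by the energy), and the potential term $\tfrac{1}{\eps}\int\hat W'(u_\eps)\phi_\eps$ is handled by comparing $\phi_\eps$ with $u_\eps$ in $L^1$ (the mollification error is $O(\eps)$ times the $BV$ norm), which converts the problematic near-well contribution into a quantity bounded by $\CE_{\eps,\hat W}(u_\eps)$. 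This is where the ``elliptic estimates with mollification'' mentioned in the paper enter; your purely pointwise-in-$u$ test functions cannot reproduce this step.
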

Lemma \ref{LEMMA_chen_manifold} was proven by Chen in \cite[Lemma 3.4]{chen96} in the case of an Euclidean domain of $\R^n$ and then extended to the setting of closed manifolds in \cite[Proposition 5.3]{benci-nardulli-osorio-piccione}.
In both cases, the proof is obtained by combining standard elliptic estimates with mollification arguments.
As the arguments carry on directly to our setting, we skip the proof.
We are now ready to prove Proposition \ref{PROPOSITION_ub}, which relies on standard arguments (see, for instance, \cite[Theorem 5.9]{benci-nardulli-osorio-piccione}).
\begin{proof}[Proof of Proposition \ref{PROPOSITION_ub}]
Assume that $\eps \in (0,\eps_{\mathrm{est}})$, with $\eps_{\mathrm{est}}$ as in Lemma \ref{LEMMA_chen_manifold}.
By \ref{asu_coercive}, we find $u^*(\eps_{\mathrm{est}},\CE_{\eps,\hat{W}}(u_\eps))$ depending only on $\eps_{\mathrm{est}}>0$ and $\CE_{\eps,\hat{W}}(u_\eps)$ such that
\begin{equation*}
\frac{1}{\eps}\hat{W}'(u)>C_{\mathrm{est}}\CE_{\eps,\hat{W}}(u_{\eps}),
\qquad\text{ for all } u \geq u^*(\eps_{\mathrm{est}},\CE_{\eps,\hat{W}}(u_\eps))
\end{equation*}
and 
\begin{equation*}
\frac{1}{\eps}\hat{W}'(u)<-C_{\mathrm{est}}\CE_{\eps,\hat{W}}(u_{\eps}),
\qquad \text{ for all } u \leq u^*(\eps_{\mathrm{est}},\_{\eps}(u_\eps)),
\end{equation*}
with $C_{\mathrm{est}}>0$ as in Lemma \ref{LEMMA_chen_manifold}. 
Let $x_{\mathrm{max}} \in M$ be a maximum point for $u_{\eps}$ and assume by contradiction that $\lVert u_\eps \rVert_{L^\infty(M,\R)} >u^*(\eps_{\mathrm{est}},\CE_{\eps,\hat{W}}(u_\eps))$. Then, using \eqref{equation_ub} and Lemma \ref{LEMMA_chen_manifold} it follows
\begin{equation*}
-\eps \Delta u_{\eps}(x_{\mathrm{max}})=\lambda_\eps - \frac{1}{\eps}\hat{W}'(u_\eps(x_{\mathrm{max}})) < 0,
\end{equation*}
which gives the contradiction. Hence, one has that $\max_M u \leq u^*(\eps_{\mathrm{est}},\CE_{\eps,\hat{W}}(u_\eps))$ and, in an analogous fashion, one finds that $\min_M u \geq u^*(\eps_{\mathrm{est}},\CE_{\eps,\hat{W}}(u_\eps))$. This establishes the result by choosing $\eps_{\mathrm{ub}}=\eps_{\mathrm{est}}$ and $C_{\mathrm{ub}}(\eps_{ub},\CE_{\eps,\hat{W}}(u_{\eps}))=u^*(\eps_{\mathrm{est}},\CE_{\eps,\hat{W}}(u_\eps))$.
\end{proof}

At this point, the proof of Theorem \ref{theorem:ACH-Neumann-weak} can be completed as follows.
\begin{proof}[Proof of Theorem \ref{theorem:ACH-Neumann-weak}]
Let $\eps \in (0,\eps_{\mathrm{ub}})$, with $\eps_{\mathrm{ub}}$ as in Lemma \ref{LEMMA_chen_manifold} and $C_{\mathrm{ub}}(\eps_{\mathrm{ub}},1)>0$ the constant given by Proposition \ref{PROPOSITION_ub}.
We can find $u^* \geq C_{\mathrm{ub}}(\eps_{\mathrm{ub}},1)$ and $\hat{W} \in \CC_{\mathrm{loc}}^3(\R,[0,+\infty))$ such that $\hat{W}^{-1}(0)=\{0,1\}$ which satisfies \ref{asu_nondegenerate}, \ref{asu_coercive} and \ref{asu_subcritical} with $\hat{W}(u)=W(u)$ for all $u \in [-u^*,u^*]$.
We now apply Theorem \ref{theorem:ACH-Neumann} to $\hat{W}$.
In particular, there exists $\hat{m}^*>0$ such that for all $m \in (0,\hat{m}^*)$ there exist $\hat{\eps}_m,\hat{c}_m >0$ such that for all $\eps \in (0,\hat{\eps}_m)$ we have that there exist $\cat(\partial M)$ solutions of \eqref{eq:PDE_Neumann} (for $\hat{W}$) $(\hat{u}_{\eps,m},\hat{\lambda}_{\eps,m})$ with $\CE_{\eps,\hat{W}}(\hat{u}_{\eps,m}) \leq \hat{c}_m$.
Moreover $\hat{c}_m$ can be chosen such that $\hat{c}_m \to 0$ as $m \to 0$, see the end of Section \ref{sect_Neumann_completed}.
Therefore, choose $m^* \leq \hat{m}^*$ such that $\hat{c}_m \leq 1$ for all $m \in (0,m^*)$.
For all such $m$, let $\eps_m\coloneqq\frac{1}{2}\min\{\hat{\eps}_m,\eps_{\mathrm{est}}\}>0$, $\eps \in (0,\eps_m)$ and $\hat{u}_{\eps,m}$.
Since $\CE_{\eps}(\hat{u}_{\eps,m}) \leq 1$ whenever $\hat{u}_{\eps,m}$ is a solution as above, we can apply Proposition \ref{PROPOSITION_ub} and find that $\lVert \hat{u}_{\eps,m} \rVert_{L^\infty(M,\R)} \leq C_{\mathrm{ub}}(\eps_{\mathrm{ub}},1) \leq u^*$ which implies that $\hat{u}_{\eps,m}$ is a solution of \eqref{eq:PDE_Neumann} for $W$ and hence the result. 
\end{proof}
The proof of Theorem~\ref{theorem:ACH-Dirichlet-weak} works in the same way, so we skip it.
\printbibliography

\bigskip
\noindent
(D. Corona)
\textsc{
School of Science and Technology,
Universit\`a degli Studi di Camerino}\\
Via Madonna delle Carceri 9,
62032, Camerino (MC), Italy\\
\emph{E-mail}: {\tt dario.corona@unicam.it}

\bigskip
\noindent
(S. Nardulli)
\textsc{
Centro de Matem\'atica Cogni\c{c}\~ao Computa\c{c}\~ao,
Universidade Federal do ABC}\\
Avenida dos Estados, 5001,
Santo Andr\'e, SP, CEP 09210-580, Brazil\\
\emph{E-mail}: {\tt stefano.nardulli@ufabc.edu.br}

\bigskip
\noindent
(R. Oliver-Bonafoux)
\textsc{
Dipartimento di Informatica,
Universit\`a di Verona}\\
Strada Le Grazie 15,
37134,Verona, Italy\\
\emph{E-mail}: {\tt ramon.oliverbonafoux@univr.it}

\bigskip
\noindent
(G. Orlandi)
\textsc{
Dipartimento di Informatica,
Universit\`a di Verona}\\
Strada Le Grazie 15,
37134,Verona, Italy\\
\emph{E-mail}: {\tt giandomenico.orlandi@univr.it}

\bigskip
\noindent
(P. Piccione)
\textsc{
Departamento de Matem\'atica,
Universidade de S\~ao Paulo}\\
Rua do Mat\~ao 1010,
S\~ao Paulo, SP 05508--090, Brazil \\
\emph{E-mail}: {\tt paolo.piccione@usp.br}

%\noindent
%\begin{tabular}{lll}
%		\textsc{}& &
%	Universidade Federal do ABC \\
%		School of Science and Technology & &
%	Centro de Matem\'atica Cogni\c{c}\~ao Computa\c{c}\~ao \\
%		Via Madonna delle Carceri 9 & &
%	Avenida dos Estados, 5001\\
%		62032	-- Camerino (MC), Italy& &
%	Santo Andr\'e, SP, CEP 09210-580, Brazil\\
%	\emph{E-mail}: {\tt dario.corona@unicam.it}& &
%	\emph{E-mail}: {\tt stefano.nardulli@ufabc.edu.br}\\
%	[0.5cm]
%		Universit\`a di Verona& &
%		Universit\`a di Verona\\
%		Dipartimento di Informatica & &
%		Dipartimento di Informatica \\ 
%            Strada Le Grazie 15 & & 
%            Strada Le Grazie 15 \\
%            37134 -- Verona, Italy & & 
%            37134 -- Verona, Italy \\
%	\emph{E-mail}: {\tt ramon.oliverbonafoux@univr.it}& &
%	\emph{E-mail}: {\tt giandomenico.orlandi@univr.it}\\
%	[.5cm]  Universidade de S\~ao Paulo & & \\
%	Departamento de Matem\'atica & & \\
%	Rua do Mat\~ao 1010 & & \\
%	S\~ao Paulo, SP 05508--090, Brazil & & \\
%	\emph{E-mail}: {\tt paolo.piccione@usp.br}
%\end{tabular}

\end{document}